\newcommand{\mr}{\mathring}
\newcommand{\wt}{\widetilde}
\newcommand{\wh}{\widehat}
\newcommand{\Z}{\mathbb{Z}}
\newcommand{\R}{\mathbb{R}}
\newcommand{\F}{\mathcal{F}}
\newcommand{\C}{\mathcal{C}}
\newcommand{\M}{\mathcal{M}}
\newcommand{\OO}{\mathcal{O}}
\newcommand{\tri}{\triangle}
\DeclareMathOperator{\id}{id}
\DeclareMathOperator{\closure}{cl}
\DeclareMathOperator{\cone}{cone}
\DeclareMathOperator{\intr}{int}
\DeclareMathOperator{\flow}{flowspace}
\DeclareMathOperator{\dimension}{dimension}
\DeclareMathOperator{\St}{St}
\DeclareMathOperator{\LD}{LD}
\renewcommand{\phi}{\varphi}
\numberwithin{equation}{section}
\newtheorem{theorem}[equation]{Theorem}
\newtheorem{proposition}[equation]{Proposition}
\newtheorem{lemma}[equation]{Lemma}
\newtheorem{corollary}[equation]{Corollary}
\newtheorem*{TST}{Transverse Surface Theorem}
\newtheorem*{appthm}{Theorem \ref{generalized Fried}}
\newtheorem*{LRthm}{Theorem \ref{stable loop theorem}}
\newtheorem*{generalTST}{Theorem \ref{myTST}}
\newtheorem*{linkcor}{Corollary \ref{link corollary}}
\theoremstyle{definition}
\newtheorem{definition}[equation]{Definition}
\newtheorem{observation}[equation]{Observation}
\newtheorem*{question*}{Question}
\theoremstyle{remark}
\newtheorem{remark}[equation]{Remark}
\begin{document}

\title{Stable loops and almost transverse surfaces}
\author{Michael Landry}

\date{}
\maketitle

\begin{figure}[h]
\centering
\includegraphics[height=2.2in]{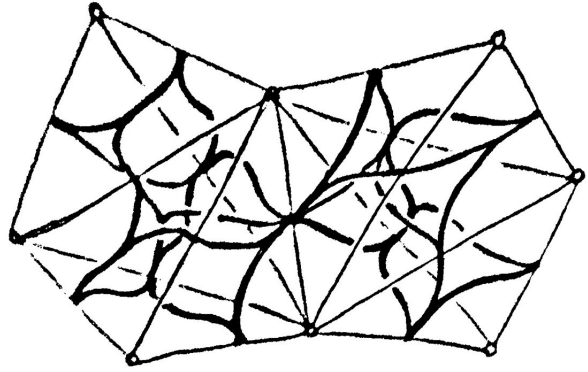}
\caption{A portion of a veering triangulation and its stable train track.}
\label{visualizingthetraintrack}
\end{figure}

\begin{abstract}
We show that the cone over a fibered face of a compact fibered hyperbolic 3-manifold is dual to the cone generated by the homology classes of finitely many curves called \emph{minimal stable loops} living in the associated veering triangulation. We also present a new, more hands-on proof of Mosher's Transverse Surface Theorem. 
\end{abstract}


\section{Introduction}
In this paper we use veering triangulations to study the suspension flows of pseudo-Anosov homeomorphisms on compact surfaces, which we call \textbf{circular pseudo-Anosov flows}. 

Let $M$ be a compact hyperbolic 3-manifold with fibered face $\sigma\subset H_2(M,\partial M;\R)$. There is a circular pseudo-Anosov flow $\phi$, unique up to reparameterization and conjugation by homeomorphisms isotopic to the identity, which organizes the monodromies of all fibrations of $M$ corresponding to $\sigma$ \cite{Fri79}. We call this flow the \textbf{suspension flow} of the fibered face.
The suspension flow $\phi$ has the following property: a cohomology class $u\in H^1(M;\R)$ is Lefschetz dual to a class in $\cone(\sigma):=\R_{\ge0}\cdot\sigma$ if and only if $u$ is nonnegative on $\C_\phi$, the \textbf{cone of homology directions} of $\phi$. Hence computing $\C_\phi$ is equivalent to computing $\cone(\sigma)$.

For a flow $F$ on $M$, $\C_F$ is the smallest closed cone containing the projective accumulation points of homology classes nearly closed orbits of $F$ \cite{Fri79, Fri82}. For our circular pseudo-Anosov flow $\phi$, $C_\phi$ has a more convenient characterization as the smallest closed cone containing the homology classes of the closed orbits of $\phi$. Our main result gives a characterization of $\C_\phi$ in terms of the veering triangulation $\tau$ associated to $\sigma$. 

\begin{LRthm}[Stable loops]
Let $M$ be a compact hyperbolic 3-manifold with fibered face $\sigma$. Let $\tau$ and $\phi$ be the associated veering triangulation and circular pseudo-Anosov flow, respectively. Then $\C_\phi$ is the smallest convex cone containing the homology classes of the minimal stable loops of $\tau$.
\end{LRthm}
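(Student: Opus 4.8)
The plan is to prove the two inclusions between $\C_\phi$ and the cone $\mathcal{S}$ generated by the homology classes of the minimal stable loops of $\tau$, passing back and forth between the flow $\phi$ and the triangulation $\tau$ by means of Fried's duality and the version of Mosher's Transverse Surface Theorem proved earlier in the paper. For $\mathcal{S}\subseteq\C_\phi$ I would use the characterization of $\C_\phi$ as the smallest closed cone containing the homology classes of the closed orbits of $\phi$: it then suffices to show that each minimal stable loop has homology class equal to a nonnegative combination, or a limit of such, of closed-orbit classes, which should come out of the relationship between the stable train track of $\tau$ and $\phi$ set up earlier --- concretely, a loop carried by the stable train track can be pushed along $\phi$ until it spirals onto closed orbits, so its class lies in $\C_\phi$.

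For the reverse inclusion I would argue dually. Since there are finitely many minimal stable loops, $\mathcal{S}$ is a closed polyhedral cone, so $\C_\phi\subseteq\mathcal{S}$ is equivalent to the inclusion of dual cones $\mathcal{S}^\vee\subseteq\C_\phi^\vee$. By Fried's theorem in the generalized form of Theorem~\ref{generalized Fried}, $\C_\phi^\vee=\LD^{-1}(\cone(\sigma))$, where $\LD$ denotes Lefschetz duality; so it is enough to show that if $u\in H^1(M;\R)$ is nonnegative on every minimal stable loop then $\LD(u)\in\cone(\sigma)$. By Theorem~\ref{myTST} this holds once $\LD(u)$ is represented by a surface almost transverse to $\phi$, so the task reduces to constructing such a surface out of the inequalities ``$u\ge 0$ on all minimal stable loops.'' Here I would work directly with the veering triangulation: after clearing denominators, read $u$ as a system of weights on the $2$-skeleton of $\tau$, and show that nonnegativity of $u$ on the minimal stable loops is exactly the combinatorial condition --- a Farkas-type dual statement in which the minimal stable loops are the extreme rays of the cone dual to the cone of carried surfaces --- guaranteeing that these weights are realized by a surface carried by the stable branched surface of $\tau$. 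One then checks that any such carried surface is almost transverse to $\phi$, after a dynamic blow-up along the singular orbits of $\phi$.

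I expect the principal obstacle to be this last construction: upgrading a finite list of inequalities $u(\gamma)\ge 0$, one per minimal stable loop $\gamma$, into an honest embedded surface carried by the stable branched surface and representing $\LD(u)$, and then verifying its almost-transversality --- in particular performing the dynamic blow-up compatibly with the carried surface. A further point requiring care is sharpness of the statement, namely that no inequalities are missing: every supporting hyperplane of $\cone(\sigma)^\vee$ should be cut out by the class of some minimal stable loop, which I would hope to extract from the way degenerations of carried surfaces along the boundary faces of $\sigma$ single out particular stable loops.
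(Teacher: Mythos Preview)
Your easy inclusion $\mathcal{S}\subseteq\C_\phi$ is fine and is essentially what the paper does: a stable loop can be perturbed to a closed $\tau$-transversal, and Proposition~\ref{a cone is a cone} then places its class in $\C_\phi$.

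The reverse inclusion, however, has a genuine gap. Your plan is to show $\mathcal{S}^\vee\subseteq\cone(\sigma_{\LD})$ by taking $u$ nonnegative on all minimal stable loops and producing a carried (or almost transverse) surface representing $\LD(u)$, via a ``Farkas-type dual statement in which the minimal stable loops are the extreme rays of the cone dual to the cone of carried surfaces.'' But that last clause \emph{is} the theorem: since $\C_\phi=\cone(\sigma_{\LD})^\vee$ and the carried cone equals $\cone(\mr\sigma)$ (Corollary~\ref{VTcarriesface}), asserting that the extreme rays of the dual to the carried cone are minimal stable loops is exactly what you are trying to prove. Farkas/LP duality only tells you the dual cone is generated by \emph{some} finite set of extreme rays; it gives no mechanism for identifying those rays with stable loops. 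Also note that reading a cohomology class $u$ as ``weights on the 2-skeleton'' is not automatic: the weight-to-homology map from the sector space of $B_{\mr\tau}$ need not be injective, so an abstract LP argument in weight space does not immediately descend to $H^1(M)$. Finally, Theorem~\ref{myTST} goes the wrong way for you here---it certifies membership in $\cone(\sigma)$ \emph{given} an almost transverse representative, but does not help you build one from the inequalities $u(\gamma)\ge 0$.

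The paper's argument supplies the missing mechanism, and it does not use Theorem~\ref{myTST} at all. It proves the reverse inclusion ray by ray: a $1$-dimensional face $\Phi$ of $\C_\phi$ is $(F_{\LD})^*$ for a facet $F$ of $\cone(\sigma)$; take a primitive integral $\alpha$ in the relative interior of $F$ and a carried representative $\mr S$ of $\mr\alpha$. Since $\alpha$ is not in the interior, $\mr S$ is not a fiber, so by Proposition~\ref{flipping fibers} any flipping sequence from $\mr S$ terminates in an \emph{unflippable} carried surface. Proposition~\ref{unflippabletolazy} then produces a stable loop $\lambda$ carried by $\St(\mr S)$; since $\lambda$ sits in the surface it pairs to zero with $\alpha$, forcing $[\lambda]\in\Phi\setminus\{0\}$. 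A cut-and-paste shows $[\lambda]$ is a sum of minimal stable loop classes, so one of them generates $\Phi$. The flippability criterion (Proposition~\ref{flipping fibers}) is the key idea your proposal is missing; it is what converts ``$\alpha$ lies on the boundary of $\cone(\sigma)$'' into an actual stable loop.
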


\begin{figure}[h]
\centering
\includegraphics[height=2in]{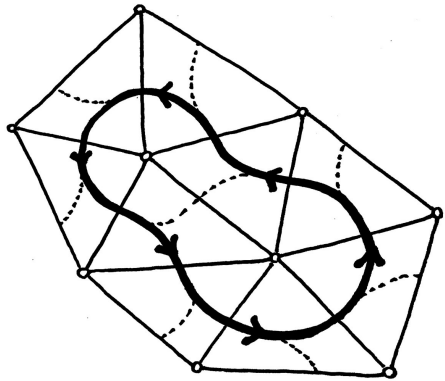}
\caption{A stable loop.}
\label{lazyriver}
\end{figure}

The veering triangulation $\tau$ is a taut ideal triangulation of a cusped hyperbolic 3-manifold $M'$ obtained from $M$ by deleting finitely many closed curves from $\intr(M)$. In Theorem \ref{stable loop theorem} we are viewing it as embedded in $M$ as an ideal triangulation of an open submanifold.
The \textbf{stable loops} of $\tau$ are a family of closed curves carried by the so-called \textbf{stable train track of $\tau$}, which lies in the 2-skeleton of $\tau$ and is defined in Section \ref{sec:traintrack}.  They correspond to nontrivial elements of the fundamental groups of leaves of the stable foliation of $\phi$. A \textbf{minimal stable loop} is a stable loop traversing each switch of the stable train track at most once.
Our proof uses the fact that the 2-skeleton of the veering triangulation is a branched surface, and (Proposition \ref{flipping fibers}) a surface carried by this branched surface is a fiber of $M'$ if and only if it is \textbf{infinitely flippable} in a sense defined in Section \ref{sec:flippability}. 

Flippability is a condition depending on the combinatorics of the veering triangulation. We begin the paper by developing some of these combinatorics. This allows us to give a hands-on, combinatorial proof of Mosher's Transverse Surface Theorem\footnote{This is actually a slight generalization of Mosher's theorem, as he proved Theorem \ref{myTST} only in the case when $\partial M=\varnothing$. He later proved a version for pseudo-Anosov flows which are not necessarily circular \cite{Mos92b}.} \cite{Mos91}:

\begin{generalTST}[Almost transverse surfaces]
Let $M$ be a compact hyperbolic 3-manifold, with a fibered face $\sigma$ of $B_x(M)$ and associated suspension flow $\phi$. Let $\alpha\in H_2(M,\partial M)$ be an integral homology class. Then $\alpha\in \cone(\sigma)$ if and only if $\alpha$ is represented by a surface almost transverse to $\phi$.
\end{generalTST}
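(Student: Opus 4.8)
\emph{Proof proposal.} The plan is to prove the two implications separately. Write $L\subset\intr(M)$ for the link whose complement $M':=M\setminus L$ carries $\tau$ (it contains the singular orbits of $\phi$), and let $B$ be the $2$-skeleton of $\tau$, regarded as a branched surface in $M'\subset M$. One implication will be essentially formal given Fried's duality as recalled in the introduction; the other is where the combinatorics of the veering triangulation do the work.

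For ``$\alpha$ almost transverse $\Rightarrow$ $\alpha\in\cone(\sigma)$'', suppose $\alpha=[S]$ with $S$ positively transverse to a dynamic blow-up $\phi^A$ of $\phi$, which is what ``almost transverse to $\phi$'' means. A dynamic blow-up replaces each orbit of $L$ by an invariant region foliated by orbits each freely homotopic into that orbit, so it does not enlarge the set of homology directions: $\C_{\phi^A}=\C_\phi$. Since every orbit of $\phi^A$ meets $S$ coherently or is disjoint from it, the Lefschetz dual $u\in H^1(M;\R)$ of $\alpha$ is nonnegative on every closed orbit of $\phi^A$, hence on $\C_{\phi^A}=\C_\phi$; by the duality recalled in the introduction, $u$ is Lefschetz dual to a class of $\cone(\sigma)$, i.e.\ $\alpha\in\cone(\sigma)$.

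For the converse, suppose $\alpha\in\cone(\sigma)$ is integral. I would first show that the homology classes of surfaces fully carried by $B$ form a closed rational polyhedral cone that, transported into $H_2(M,\partial M;\R)$ by filling $M'$ back up to $M$ along the flow-determined slopes on $\partial N(L)$, equals $\cone(\sigma)$: the interior of $\cone(\sigma)$ consists of fibered classes, a fiber of $M$ meets $N(L)$ in meridian disks and hence restricts to a fiber of $M'$, and by the defining properties of the veering triangulation associated to a fibered face --- made explicit through the combinatorics of $\tau$ developed earlier in the paper --- such a fiber is isotopic to a surface carried by $B$; since the carried cone is closed it then contains all of $\cone(\sigma)$. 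Consequently $\alpha$ is represented by a properly embedded $S_0\subset M'$ carried by $B$, which we cap off across $N(L)$ by meridional pieces to get $S\subset M$ with $[S]=\alpha$. Finally, $S$ is almost transverse to $\phi$: the branched surface $B$ is, by construction, transverse to a dynamic blow-up $\phi^A$ of $\phi$ on $M'$, so $S_0$ is transverse to $\phi^A$ there, and the capping pieces can be arranged transverse to the same $\phi^A$ near $L$.

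The main obstacle is the converse, in two interlocking parts. The first is the precise dictionary between $\cone(\sigma)$ and the carried cone of $B$ --- verifying that every fibered class restricts to one carried by $B$, and conversely --- which rests on the relationship between the fibrations of $M$ and of $M'$ and on what is known about the veering triangulation of a fibered face; note that for $\alpha\in\partial\cone(\sigma)$ the representative is not a fiber, so one must settle for ``carried by $B$'' and cannot appeal to the fiber characterization of Proposition \ref{flipping fibers}. The second is the behavior near $L$: controlling how a carried surface approaches the drilled solid tori and checking that the capped-off surface is genuinely transverse to one dynamic blow-up $\phi^A$. This second point is exactly where honest transversality is impossible and Mosher's weaker notion becomes necessary, so producing a single $\phi^A$ compatible with both $B$ and the capping pieces is the heart of the argument.
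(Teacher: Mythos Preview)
Your overall architecture matches the paper's, and your forward direction (almost transverse $\Rightarrow$ in the cone) is in spirit correct, though the paper does it differently: rather than arguing via $\C_{\phi^A}=\C_\phi$, it uses the relative Euler class, showing $x([S])\le -\chi(S)=\langle -e_{\phi^\sharp},[S]\rangle=\langle -e_\phi,[S]\rangle\le x([S])$ to conclude both tautness and membership in $\cone(\sigma)$. Your homology-directions route would work but needs the claim $\C_{\phi^A}=\C_\phi$ justified carefully.

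The genuine gap is in the converse. You write that the carried surface is ``capped off across $N(L)$ by meridional pieces'' and that ``$B$ is, by construction, transverse to a dynamic blow-up $\phi^A$.'' Both are wrong as stated. The branched surface $B_{\mr\tau}$ is transverse to $\phi$ itself on $\mr M$; no blow-up is involved there. The blow-up enters only at the singular orbits, and it is \emph{not} fixed in advance: it depends on the surface. For $\alpha\in\partial\cone(\sigma)$, the boundary curves $\mr A\cap\partial U_i$ need not be meridians at all --- they can be collections of ladderpole curves (nullhomologous on $\partial U_i$). In that case no meridional disk will cap them, and one must glue in \emph{annuli} inside $U_i$ transverse to some blow-up of $\phi$.

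The paper supplies exactly the mechanism you are missing. Intersecting the flow prongs with a meridional disk $\Delta$ of $U_i$ gives a pseudo-Anosov star $S$, and (via Lemma~\ref{ladderseparatrixstructure}) the points $\mr A\cap\partial\Delta$ form an \emph{even family} for $S$. The combinatorial Lemma~\ref{blowupfill} then shows that, after a suitable $\theta$-symmetric dynamic blow-up $S^\sharp$ of $S$, this even family can be filled in by cooriented arcs transverse to $S^\sharp$; suspending these arcs yields the capping annuli transverse to the corresponding $\phi^\sharp$. This lemma is the heart of the argument, and your proposal has no analogue of it --- you correctly flag ``producing a single $\phi^A$ compatible with both $B$ and the capping pieces'' as the crux, but provide no construction. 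Without it the proof does not go through for classes on $\partial\cone(\sigma)$.
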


A surface is \textbf{almost transverse} to $\phi$ if it is transverse to a closely related flow $\phi^\sharp$, obtained from $\phi$ by a process called \textbf{dynamically blowing up} singular orbits.
The precise definition of almost transversality is found in Section \ref{TSTstatement}. 

Loosely speaking, the strategy of our proof of Theorem \ref{myTST} is to arrange a surface to lie in a regular neighborhood of the veering triangulation away from the singular orbits of $\phi$, and then use our knowledge of  how the veering triangulation sits in relation to $\phi$ in order to appropriately blow up the flow near the singular orbits. Mosher, without the machinery of veering triangulations available to him, performed a deep analysis of the dynamics of the lift of $\phi$ to the cyclic cover of $M$ associated to the surface. Including this dynamical analysis, his complete proof of the theorem spans \cite{Mos89,Mos90,Mos91}.

Taut ideal triangulations were introduced by Lackenby in \cite{Lac00} as combinatorial analogues of taut foliations, where he uses them to give an alternative proof of Gabai's theorem that the singular genus of a knot is equal to its genus. He states ``One of the principal limitations of taut ideal triangulations is that they do not occur in closed 3–manifolds," and asks:

\begin{question*}[Lackenby]
Is there a version of taut ideal triangulations for closed 3–manifolds?
\end{question*}

While we do not claim a comprehensive answer to Lackenby's question, a a theme of this paper and \cite{Lan18} is that for fibered hyperbolic 3-manifolds (possibly closed), a veering triangulation of a dense open subspace is a useful version of a taut ideal triangulation.

Veering triangulations are introduced by Agol in \cite{Ago10}. There  is a canonical veering triangulation associated to any fibered face of a hyperbolic 3-manifold, and Gu\'eritaud showed \cite{Gue15} that it can be built directly from the suspension flow. If taut ideal triangulations are combinatorial analogues of taut foliations, Gu\'eritaud's construction allows us to view veering triangulations as combinatorializations of pseudo-Anosov flows\footnote{The forthcoming paper [SchSeg19] will make this explicit.}.

We include two appendices. In Appendix \ref{Fried appendix} we prove that a result of Fried from \cite{Fri79}, which was stated and proved for closed hyperbolic 3-manifolds, holds for manifolds with boundary. The result, which we have been assuming so far in this introduction and which is necessary for the results of this paper, states that the suspension flow $\phi$ is canonically associated to $\sigma$ in the sense described above. In Appendix \ref{TBSexpanded}, we explain how to use Theorem \ref{myTST} to show the results of \cite{Lan18} hold for manifolds with boundary. In particular we get the following corollary.

\begin{linkcor}
Let $L$ be a fibered hyperbolic link with at most 3 components. Let $M_L$ be the exterior of $L$ in $S^3$. Any fibered face of $B_x(M_L)$ is spanned by a taut branched surface.
\end{linkcor}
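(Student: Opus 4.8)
The plan is to reduce the corollary to Theorem \ref{myTST} (Almost transverse surfaces), following the strategy indicated in Appendix \ref{TBSexpanded}. Let $L\subset S^3$ be a fibered hyperbolic link with at most $3$ components and let $M_L$ be its exterior. Fix a fibered face $\sigma$ of the Thurston norm ball $B_x(M_L)$, with associated suspension flow $\phi$ and associated veering triangulation $\tau$ of a dense open submanifold $M_L'\subset M_L$. The goal is to produce a single taut branched surface $B$ (carried in a regular neighborhood of the $2$-skeleton of $\tau$, after dynamically blowing up the singular orbits of $\phi$ as needed) whose cone of carried surfaces is exactly $\cone(\sigma)$. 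The branched surface $B$ will be the $2$-skeleton of $\tau$, suitably modified near the singular orbits; its tautness comes from the taut structure on $\tau$, and the point is that every integral class in $\cone(\sigma)$ is carried by $B$ while no class outside $\cone(\sigma)$ is.

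First I would recall from \cite{Lan18} the mechanism that makes ``spanned by a taut branched surface'' equivalent to a statement about which homology classes are represented by surfaces almost transverse to $\phi$: a taut branched surface carrying a fiber carries, among its nonnegative integral weight systems, exactly the fibered classes in the cone it spans, and conversely a family of almost transverse surfaces sweeping out $\cone(\sigma)$ can be isotoped to lie in a neighborhood of a branched surface built from $\tau$. Then by Theorem \ref{myTST}, an integral class $\alpha\in H_2(M_L,\partial M_L)$ lies in $\cone(\sigma)$ if and only if $\alpha$ is represented by a surface almost transverse to $\phi$, i.e. transverse to the blown-up flow $\phi^\sharp$. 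So the content is: the blowing-up can be done uniformly for the whole face (not class-by-class), and the resulting transverse branched surface is taut and spans $\sigma$. The hypothesis of at most $3$ components enters exactly here — it controls the number and type of singular orbits of $\phi$ meeting $\partial M_L$ (equivalently, the combinatorics of the ideal vertices of $\tau$), so that a single dynamic blow-up suffices and the branched surface one obtains is genuinely taut rather than merely transverse; with more components one may be forced into incompatible blow-up choices for different classes in the face.

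Concretely the steps are: (1) invoke Theorem \ref{myTST} to get, for a vertex class of $\sigma$, a surface transverse to some $\phi^\sharp$; (2) show, using the structure of $\tau$ and Proposition \ref{flipping fibers}, that the fibers of $M_L'$ carried by the $2$-skeleton of $\tau$ are exactly the infinitely flippable carried surfaces, so the $2$-skeleton already ``sees'' all fibered classes of $\sigma$; (3) check that the dynamic blow-up needed to make a transverse representative of a boundary-touching class can be performed consistently across the face when $L$ has $\le 3$ components, producing one branched surface $B$; (4) verify $B$ is taut (co-orientable, no monogons/disk-of-contact, with the sectors carrying a taut-ideal-triangulation-like structure inherited from $\tau$) and that the cone of projective classes of surfaces carried by $B$ equals $\cone(\sigma)$, using Theorem \ref{myTST} again for the reverse inclusion. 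I expect step (3) to be the main obstacle: making the blow-up choices at the singular orbits coherent for every class in the face simultaneously is exactly what fails in general, and the $\le 3$ component hypothesis is what rescues it; verifying tautness in step (4) is then a mostly bookkeeping matter once the branched surface is in hand.
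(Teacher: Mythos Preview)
Your proposal has a genuine gap: you misidentify where the hypothesis ``at most $3$ components'' enters. It has nothing to do with the number or type of singular orbits of $\phi$, nor with the combinatorics of the ideal vertices of $\tau$. For a $k$-component link $L\subset S^3$ one has $\dim H_2(M_L,\partial M_L)=k$, and the paper's argument is simply that $k\le 3$ places us in the regime of the preceding corollary (any fibered face of a compact hyperbolic $3$-manifold with $\dim H_2\le 3$ is spanned). That corollary in turn follows from Theorem~\ref{strongerTBS}: when $\dim H_2\le 3$ each singular orbit can witness at most two \emph{ladderpole vertex classes} of $\sigma$ (an observation from \cite{Lan18}), and that is precisely the hypothesis under which the construction of \cite{Lan18} yields a single spanning taut branched surface. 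The number of interior singular orbits of $\phi$ is entirely unrelated to the number of components of $L$.

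Your step~(3) is the heart of the matter and you leave it as an assertion. The obstruction to making the construction uniform over the face is not boundary combinatorics but the number of distinct ladderpole patterns the \emph{vertex classes} of $\cone(\sigma)$ exhibit at a fixed singular orbit $c_i$; moreover the branched surface in \cite{Lan18} is not literally the $2$-skeleton of a single dynamic blowup of $\phi$, but $B_{\mr\tau}$ augmented by extra sectors near each $c_i$ carrying the capping annuli, and the ``at most two'' condition is what allows those sectors to be chosen compatibly. You also omit Lemma~\ref{TBSapplemma}, the identity $x(\alpha)=x(\mr\alpha)-i(\alpha,c)$, which is the actual new technical input extending \cite{Lan18} to manifolds with boundary; it is what certifies that every surface carried by the resulting branched surface is taut. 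Without it, your step~(4) has no mechanism for verifying tautness beyond hand-waving about the taut structure on $\tau$.
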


\subsection{Acknowledgements}
I thank Yair Minsky, James Farre, Samuel Taylor, and Ian Agol for stimulating conversations. I additionally thank Yair Minsky, my PhD advisor, for generosity with his time and attention during this research and throughout my time as a graduate student.

I gratefully acknowledge the support of the National Science Foundation Graduate Research Fellowship Program under Grant No. DGE-1122492, and of the National Science Foundation under Grant No. DMS-1610827 (PI Yair Minsky). Any opinions, findings, and conclusions or recommendations expressed in this material are my own and do not necessarily reflect the views of the National Science Foundation.

\section{Preliminaries}

In this paper, all manifolds are orientable and all homology and cohomology groups have coefficients in $\R$.

\subsection{The Thurston norm, fibered faces, relative Euler class}\label{norm background}

We review some facts about the Thurston norm, which can be found in \cite{Thu86}. Let $M$ be a compact, irreducible, boundary irreducible, atoroidal, anannular 3-manifold ($\partial M$ may be empty). If $S$ is a connected surface embedded in $M$,  define 
\[
\chi_-(S)=\max\{0,-\chi(S)\}
\]
where $\chi$ denotes Euler characteristic. If $S$ is disconnected, let $\chi_-(S)=\sum_i \chi_-(S_i)$ where the sum is taken over the connected components of $S$. For any integral homology class $\alpha\in H_2(M,\partial M)$, we can find an embedded surface representing $\alpha$. Define
\[
x(\alpha)=\min\{\chi_-(S)\mid \text{$S$ is an embedded surface representing $\alpha$}\}.
\]
Then $x$ extends by linearity and continuity from the integer lattice to a vector space norm on $H_2(M,\partial M)$ called the \textbf{Thurston norm} \cite{Thu86}. We mention that Thurston defined $x$ more generally to be a seminorm on $H_2(M,\partial M)$ for any compact orientable $M$. However, in this paper $x$ will always be a norm, since the manifolds we consider will not admit essential surfaces of nonnegative Euler characteristic.

The unit ball of $x$ is denoted by $B_x(M)$. As a consequence of $x$ taking integer values on the integer lattice, $B_x(M)$ is a finite-sided polyhedron with rational vertices. Our convention in this paper is that a \textbf{face} of $B_x(M)$ is a \emph{closed} cell of the polyhedron.

We say an embedded surface $S$ is 
if it is incompressible and realizes the minimal $\chi_-$ in $[S]$. 
If $\Sigma\subset M$ is the fiber of a fibration $\Sigma\hookrightarrow M\to S^1$ then $\Sigma$ is taut, any taut surface representing $[\Sigma$]  is isotopic to $\Sigma$, and $[\Sigma]$ lies in $\intr(\cone(\sigma))$ for some top-dimensional face $\sigma$ of $B_x(M)$. Moreover, any other integral class representing a class in $\intr(\cone(\sigma))$ is represented by the fiber of some fibration of $M$ over $S^1$. Such a top-dimensional face $\sigma$ is called a \textbf{fibered face}. 

Let $\xi$ be an oriented plane field on $M$ which is transverse to $\partial M$. If we fix an outward pointing section of $\xi|_{\partial M}$, this determines a \textbf{relative Euler class} $e_\xi\in H^2(M,\partial M)$. For a relative 2-cycle $S$, $e_\xi([S])$ is the first obstruction to finding a nonvanishing section of $\xi|_S$ agreeing with the outward pointing section on $\partial S\subset\partial M$. A reference on relative Euler class is \cite{Sha73}. 

If $\phi$ is a flow on $M$ tangent to $\partial M$, Let $T\phi$ be the oriented line field determined by the tangent vectors to orbits of $\phi$. Let $\xi_\phi$ be the oriented plane field which is the quotient bundle of $TM$ by $T\phi$. We can think of $\xi_\phi$ as a subbundle of $TM$ by choosing a Riemannian metric and identifying $\xi_\phi$ with the orthogonal complement of $T\phi$. For notational simplicity we define $e_{\phi}=e_{\xi_\phi}$, the relative Euler class of $\xi_\phi$.

Fix a fibration $Y\hookrightarrow M\to S^1$, which allows us to express $M\cong (Y\times[0,1])/(y,1)\sim (g(y),0)$ for some homeomorphism $g$ of $Y$. Let $TY$ be the tangent plane field to the foliation of $M$ by $(Y\times\{t\})$'s. Let $\phi$ be the \textbf{suspension flow} of $g$, which moves points in $M$ along lines $(y,t)$ for fixed $y$, gluing by $g$ at the boundary of $Y\times[0,1]$. We have $\xi_\phi\cong TY$ and hence $e_{\phi}=e_{TY}$.
For some fibered face $\sigma$, we have $[Y]\in \intr(\cone(\sigma))$.  We have $x([Y])=-\chi(Y)=-e_{TY}([Y])$, i.e. $x$ and $e_{TY}$ agree on $[Y]$. In fact, more is true: $\cone(\sigma)$ is exactly the subset of $H_2(M,\partial M)$ on which $-e_{TY}$ and $x$ agree.

It can be fruitful to think of a properly embedded surface $S$ in $M$ as representing both a homology class in $H_2(M,\partial M)$ and a cohomology class in $H^1(M)$ mapping homology classes of closed curves to their intersection number with $S$. As such we will sometimes think of $x$ as a norm on $H^1(M)$ via Lefschetz duality. The image in $H^1(M)$ of a face $\sigma$ of $B_x(M)$ will be denoted $\sigma_{\LD}$, and in general the subscript $LD$, when attached to an object, will denote the Lefschetz dual of that object.

\subsection{Circular pseudo-Anosov flows}

\begin{figure}[h]
\centering
\includegraphics[height=3in]{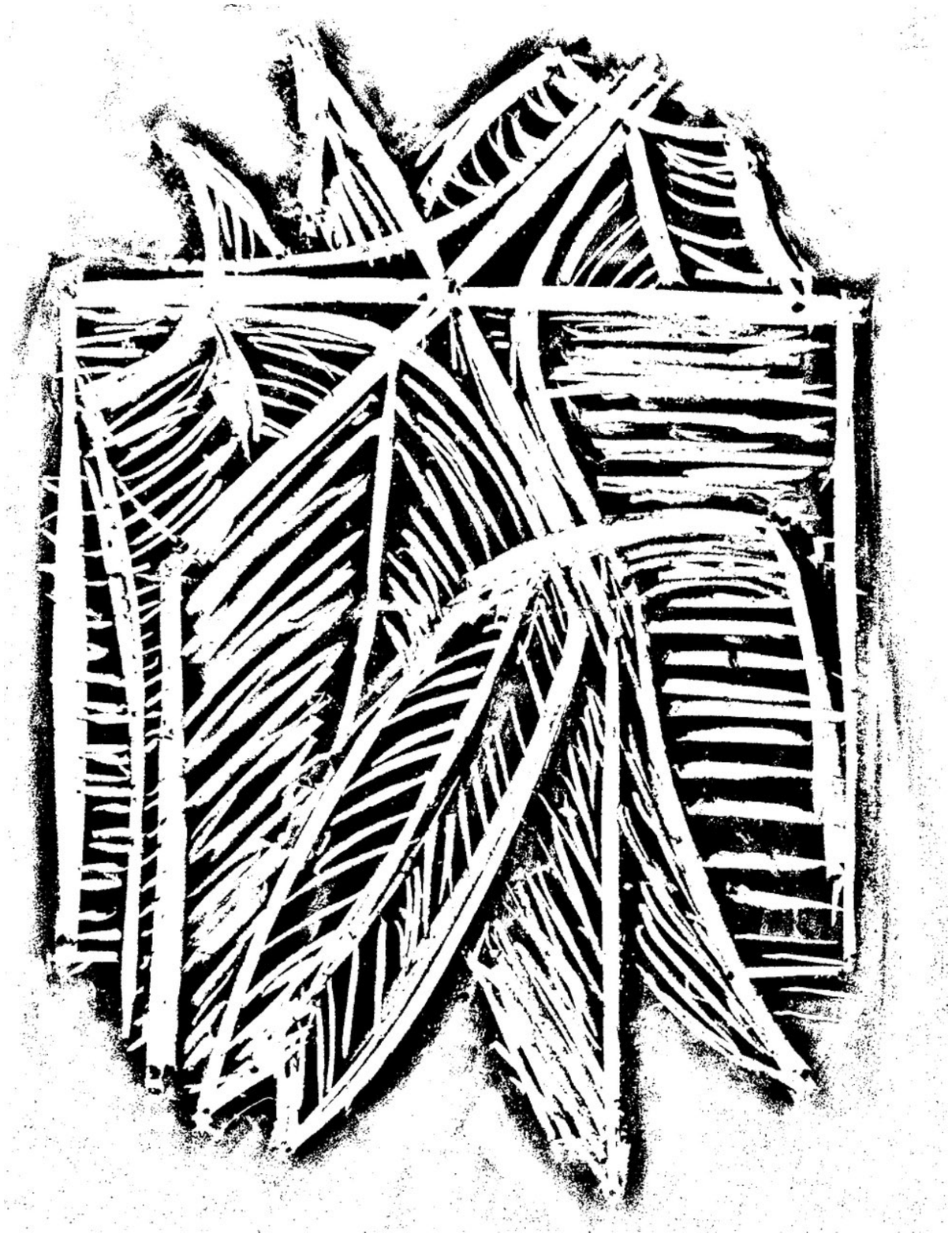}
\includegraphics[height=3in]{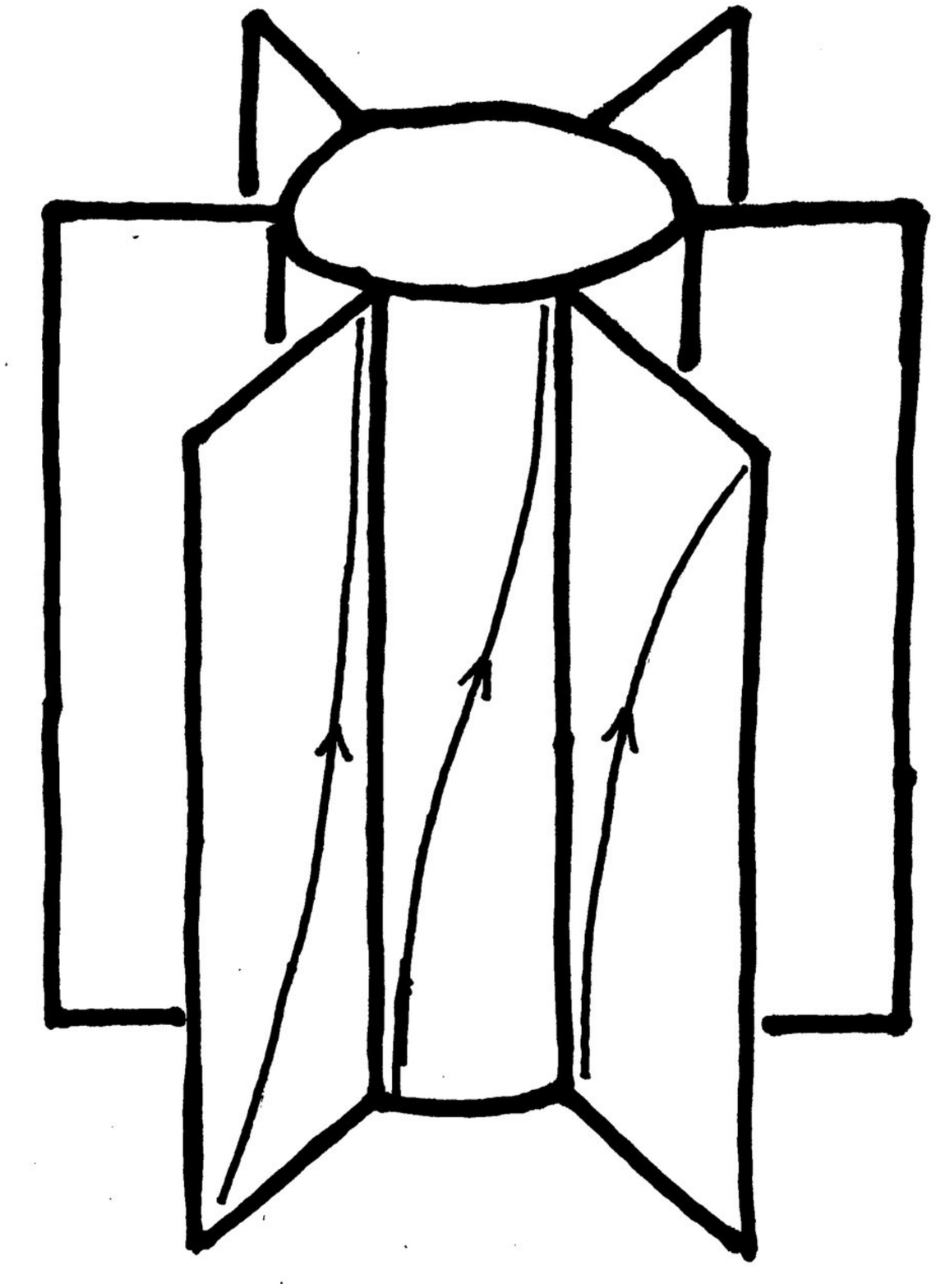}
\caption{On the left we see leaves of the transverse foliations of a pseudo-Anosov flow near a singular orbit. On the right we see the behavior of a circular pseudo-Anosov flow near a boundary component.}
\label{pAdef}
\end{figure}

Let $F$ be a flow on $M$ which is tangent to $\partial M$.
Recall that a \textbf{cross section} to $F$ is a fiber of a fibration $M\to S^1$ whose fibers are transverse to $F$. Flows which admit cross sections are called \textbf{circular}. We call the suspension flow $\phi$ of a pseudo-Anosov map $g\colon Y\to Y$ on a compact surface $Y$ a \textbf{circular pseudo-Anosov flow}.

Such a $g$ preserves two transverse measured foliations called the \textbf{stable} and \textbf{unstable} foliations of $g$ which are transverse everywhere except the boundary of the surface (see \cite{Thu88}). These foliations give two transverse codimension-1 foliations in $M$ preserved by $\phi$ called the \text{stable} and \textbf{unstable} foliations of $\phi$. The closed orbits corresponding to the singular points of $g$ lying in $\intr(Y)$ are called \textbf{singular orbits}. The closed orbits lying on $\partial M$, which correspond to the boundary components of certain stable and unstable leaves, are called \textbf{$\partial$-singular orbits}. See Figure \ref{pAdef}.

If $Z$ is a cross section to a flow $\phi$, the first return map of $Z$ is the map sending $z\in Z$ to the first point in its forward orbit under $\phi$ lying
in $Z$. If $\phi$ is a circular pseudo-Anosov flow, the first return map of any cross section to $\phi$ will be pseudo-Anosov (this was proved in \cite{Fri79} for closed manifolds, but the proof in general is essentially the same).

\subsection{Review of the veering triangulation}

\begin{figure}[h]
\centering
\includegraphics[height=2in]{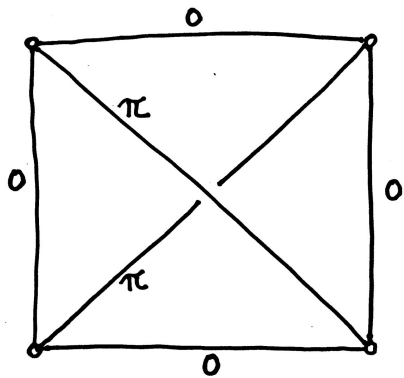}
\caption{A taut tetrahedron. The labels on edges indicate interior angles, and our convention is that the coorientation of each face points out of the page.}
\label{tauttetrahedron}
\end{figure}

A \textbf{taut tetrahedron} is an ideal tetrahedron with the following edge and face decorations. Four edges are labeled 0, and two are labeled $\pi$. Two faces are cooriented outwards and two are cooriented inwards, and faces of opposite coorientation meet only along edges labeled 0. The edge labels should be thought of as the interior angles of the corresponding edges. See Figure \ref{tauttetrahedron}. We define the \textbf{top} (resp. \textbf{bottom}) of a taut tetrahedron $t$ to be the union of the two faces whose coorientations point out of (resp. into) $t$. The \textbf{top} (resp. \textbf{bottom}) \textbf{$\pi$-edge} of $t$ will be the edge labeled $\pi$ lying in the top (resp. bottom) of $t$.

\begin{figure}[h]
\centering
\includegraphics[height=1.7in]{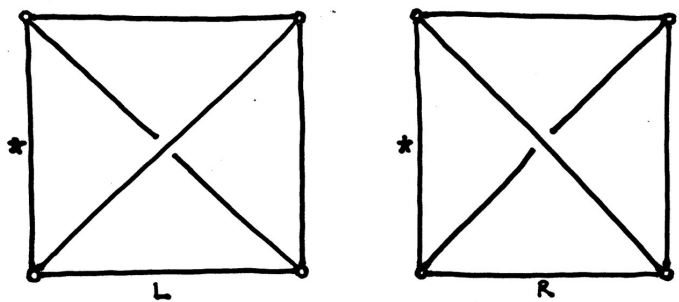}
\caption{The taut tetrahedra above are of types $L$ and $R$ when the starred edge is distinguished.}
\label{RandLtetrahedra}
\end{figure}

Up to orientation-preserving combinatorial equivalence, there are two types of taut tetrahedron with a distinguished 0-edge. We call these $L$ and $R$ and they are shown in Figure \ref{RandLtetrahedra}. 
\begin{definition}
A \textbf{taut ideal triangulation} of a 3-manifold is an ideal  triangulation by taut tetrahedra such that 
\begin{enumerate}
\item when two faces are identified their coorientations agree,
\item the sum of interior angles around a single edge is $2\pi$, and
\item no two interior angles of $\pi$ are adjacent around an edge.
\end{enumerate}
\end{definition}

A consequence of the third condition above is that each edge of a taut ideal triangulaton has degree $\ge 4$.

\begin{definition}
Let $e$ be an edge of a taut ideal triangulation $\triangle$. If $e$ has the property that all tetrahedra for which $e$ is a 0-edge are of type $R$ when $e$ is distinguished, we say $e$ is \textbf{right veering}. Symmetrically, if they are all of type $L$ we say $e$ is \textbf{left veering}. If every edge of $\triangle$ is either right or left veering, $\triangle$ is \textbf{veering}.
\end{definition}

\begin{figure}[h]
\centering
\includegraphics[height=2in]{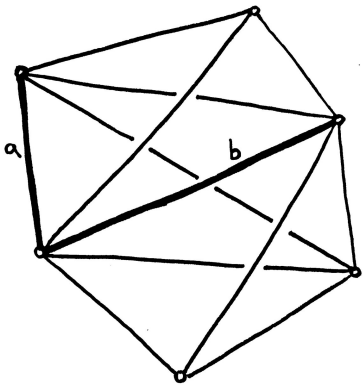}
\caption{Part of a veering triangulation. The edge $a$ is left veering, while $b$ is right veering.}
\label{veeringdeffigure}
\end{figure}

The 0 and $\pi$ labels tell us how to ``pinch" a taut ideal triangulation along its edges so that the 2-skeleton has a well-defined tangent space at every point. We will always assume the 2-skeleton of a taut ideal triangulation is embedded in such a way.

\subsection{The veering triangulation of a fibered face}\label{canonical veering section}

In this section we move somewhat delicately between compact manifolds and their interiors. The reason for this is that we wish to work in a compact manifold and study its second homology rel boundary, but veering triangulations live most naturally in cusped manifolds.

Veering triangulations were introduced by Agol in \cite{Ago10}, where he canonically associated a veering triangulation to a pseudo-Anosov surface homeomorphism.
Let $g\colon  Y\to Y$ be a pseudo-Anosov map on a compact surface $Y$ with associated stable and unstable measured foliations $\F^s$ and $\F^u$. 
Let $Y'=\intr Y\setminus\{\text{singularities of $\F^s,\F^u$}\}$ and $g'= g|_{Y'}$, and let $M_g$ and $M_{ g'}$ be the respective mapping tori of $g$ and $ g'$. Agol constructs an ideal veering triangulation of $M_{g'}$ from a sequence of Whitehead moves between ideal triangulations of $Y'$ which are dual to a periodic splitting sequence of measured train tracks carrying the stable lamination of $g'$. Each Whitehead move corresponds to gluing a taut tetrahedron to $Y'$, and the resulting taut ideal triangulation of $Y'\times[0,1]$ glues up to give a taut ideal triangulation of $M_{g'}$. We call a taut ideal triangulation of this type \textbf{layered on $Y'$}. Agol shows that up to combinatorial equivalence there is only one veering triangulation of $M_{g'}$ which is layered on $Y$, and we call this the \textbf{veering triangulation of $g$}.

In \cite{Gue15} Gu\'eritaud provided an alternative construction of the veering triangulation of $g$ which we summarize now; a nice account is also given in \cite{MinTay17}.

Let $\wt{Y'}$ be the universal cover of $Y'$, and $\wh{Y'}$ be the space obtained by attaching a point to each lift of an end of $Y'$ to $\widetilde{Y'}$. The measured foliation $\F^s|_{Y'}$ lifts to $\wt{Y'}$ and gives rise to a measured foliation on $\wh{Y'}$ with singularities at points of $\wh{Y'}\setminus\wt{Y'}$. We call this the \textbf{vertical foliation}, and the analogous measured foliation coming from $\F^u|_{Y'}$ is called the \textbf{horizontal foliation}. In pictures, we will arrange the vertical and horizontal foliations so they are actually vertical and horizontal in the page.

The transverse measures on the vertical and horizontal foliations give $\wh{Y'}$ a singular flat structure.  A \textbf{singularity-free rectangle} is a subset of $\wh{Y'}$ which can be identified with $[0,1]\times[0,1]$ such that for all $t\in [0,1]$, $\{t\}\times[0,1]$ (resp. $[0,1]\times \{t\}$) is a leaf of the vertical (resp. horizontal) foliation.

We consider the family of singularity-free rectangles which are maximal with respect to inclusion. Any such maximal rectangle has one singularity in the interior of each edge. 

Each maximal rectangle $R$ defines a map $f_R\colon t_R\to R$ of a taut  tetrahedron into $R$ which ``flattens" $t_R$ and has the following properties: the pullback of the orientation on $R$ induces the correct coorientation on each triangle in $t_R$, the top (resp. bottom) $\pi$-edge of $t_R$ is mapped to a segment connecting the singularities on the horizontal (resp. vertical) edges of $R$, and $f(e)$  is a geodesic in the singular flat structure of $\wh{Y'}$. See Figure \ref{maximalrectangle}.

\begin{figure}[h]
\centering
\includegraphics[height=1.5in]{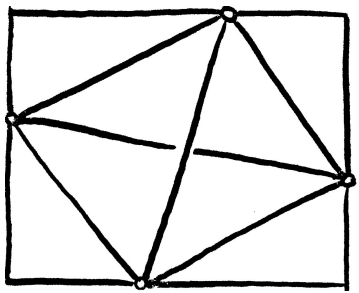}
\caption{A maximal rectangle and the image of the associated associated taut tetrahedron under flattening.}
\label{maximalrectangle}
\end{figure}

We can build a complex from $\bigcup t_{R_i}$, where the union is taken over all maximal rectangles, by making all the identifications of the following type: let $R_1$ and $R_2$ be maximal rectangles, and suppose $\Delta_1$ and $\Delta_2$ are faces of $t_{R_1}$ and $t_{R_2}$ such that $f_{R_1}(\Delta_1)=f_{R_2}(\Delta_2)$. Then we identify $\Delta_1$ and $\Delta_2$. The resulting complex is a taut ideal triangulation of $\wt{Y'}\times\R$, and one checks that it is veering. Gu\'eritaud showed that it descends to a layered veering triangulation of $M_{g'}$, which must be the one constructed by Agol.

While the veering triangulation is canonically associated to $g\colon Y\to Y$, it is in fact ``even more canonical" than that. To elaborate, we need the following result.

\begin{theorem}[Fried]\label{fried's theorem}
Let $\phi$ be a circular pseudo-Anosov flow on a compact 3-manifold $M$. Let $S$ be a cross section of $\phi$ and let $\sigma$ be the fibered face of $B_x(M)$ such that $[S]\in \cone(\sigma)$. An integral class $\alpha\in H_2(M,\partial M)$ lies in $\intr(\cone(\sigma))$ if and only if $\alpha$ is represented by a cross section to $\phi$. Up to reparameterization and conjugation by homeomorphisms of $M$ isotopic to the identity, $\phi$ is the only such circular pseudo-Anosov flow. 
\end{theorem}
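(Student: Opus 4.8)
The plan is to follow Fried's proof for closed hyperbolic $3$-manifolds in \cite{Fri79}, together with his general theory of cross sections in \cite{Fri82}, and to supply the modifications forced by a nonempty $\partial M$ to which $\phi$ is tangent. The statement has two parts — a description of the classes carried by cross sections, and the uniqueness of $\phi$ — and the uniqueness is the substantive one: one direction of the description is elementary, and the other is deduced from the uniqueness. I will use throughout Fried's general cross-section theorem in the following form: for a flow $\psi$ on a compact manifold, tangent to the boundary and admitting at least one cross section, the set $\mathcal D_\psi\subseteq H^1(M)$ of classes Lefschetz dual to cross sections is a nonempty open cone, equal to the interior of the cone $\C_\psi^{\vee}\subseteq H^1(M)$ dual to the cone of homology directions $\C_\psi$, and every integral class of $\mathcal D_\psi$ is dual to a cross section; when $\psi$ is circular pseudo-Anosov, $\C_\psi$ is the closed cone spanned by the homology classes of the closed orbits of $\psi$. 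The only place a nonempty $\partial M$ intervenes here is the transversality bookkeeping — cross sections are properly embedded surfaces meeting $\partial M$ in curves transverse to the $\partial$-singular orbits — and one checks that Fried's passage from a nowhere-zero closed $1$-form to a cross section, and his covering of $M$ by flow boxes, both go through using flow boxes adapted along $\partial M$.

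For the elementary direction of the description, let $Z$ be a cross section of $\phi$. Transversality gives $TM|_Z=TZ\oplus T\phi|_Z$, so $\xi_\phi=TM/T\phi$ restricts on $Z$ to $TZ$, and hence $-e_\phi([Z])=-\chi(Z)=\chi_-(Z)\ge x([Z])$, where $\chi(Z)\le 0$ because $M$ admits no essential surface of nonnegative Euler characteristic. By the discussion in Section~\ref{norm background}, $-e_\phi\le x$ on all of $H_2(M,\partial M)$, with equality exactly on $\cone(\sigma)$; combining, $-e_\phi([Z])=x([Z])$, so $[Z]\in\cone(\sigma)$, and since $Z$ is a fiber, $[Z]$ lies in the interior of the cone over a fibered face, which must be $\sigma$. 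Since $\mathcal D_\phi$ is open and each of its rational points is a positive rational multiple of a cross-section class, hence lies in $\intr(\cone(\sigma))$, it follows (using that $\cone(\sigma)$ is top-dimensional) that $\mathcal D_\phi$ corresponds under Lefschetz duality to a subset of $\intr(\cone(\sigma))$.

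The heart of the proof — and the step I expect to be the main obstacle — is the reverse inclusion: for \emph{every} circular pseudo-Anosov flow $\psi$ with a cross section whose class lies in $\intr(\cone(\sigma))$, the cone $\mathcal D_\psi$ corresponds to \emph{all} of $\intr(\cone(\sigma))$; equivalently, $\C_\psi$ is exactly the cone dual to $\cone(\sigma)$. This is proved by computing $\C_\psi$ from the pseudo-Anosov data — via a Markov partition of a first-return map, or via Gu\'eritaud's flat-structure description of $\psi$ — and matching the result against $\cone(\sigma)$ using once more the characterization that $\cone(\sigma)$ is the locus where $-e_\psi$ and $x$ agree; this is precisely Fried's technical analysis, redone here with boundary. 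The genuinely new feature is the $\partial$-singular orbits: they contribute boundary homology classes to $\C_\psi$, and one must verify that these pair nonnegatively with every class of $\cone(\sigma)$ — which holds because a fiber meets such an orbit nonnegatively, the flow being transverse to the fiber — so that they do not enlarge $\C_\psi$ beyond the dual of $\cone(\sigma)$. Granting this, the uniqueness follows quickly. Let $\psi$ be a second circular pseudo-Anosov flow with a cross section in $\intr(\cone(\sigma))$; then $\mathcal D_\phi$ and $\mathcal D_\psi$ both correspond to $\intr(\cone(\sigma))$, so there is a primitive integral class $\alpha\in\intr(\cone(\sigma))$ represented by a cross section $Z_\phi$ of $\phi$ and a cross section $Z_\psi$ of $\psi$. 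Both are taut surfaces representing $\alpha$, hence isotopic (Section~\ref{norm background}); absorbing this isotopy into the allowed conjugations, we may assume $\phi$ and $\psi$ have a common cross section $Z$. Its first-return maps under $\phi$ and under $\psi$ are the monodromies of two fibrations of $M$ in the class $\alpha$, hence conjugate in the mapping class group of $Z$, hence — being pseudo-Anosov — conjugate by a homeomorphism of $Z$ (\cite{Thu88}). Since a flow with a cross section is, up to reparameterization and an ambient isotopy of $M$, the suspension of its first-return map, $\phi$ and $\psi$ are conjugate up to reparameterization; chasing the isotopies through shows the conjugating homeomorphism may be taken isotopic to the identity.

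Finally, the remaining direction of the description follows from the uniqueness. Given an integral class $\alpha\in\intr(\cone(\sigma))$, since $\sigma$ is fibered we have $\alpha=[\Sigma]$ for a fiber $\Sigma$ of a fibration of $M$, whose monodromy is pseudo-Anosov because $M$ is hyperbolic; its suspension flow $\psi$ is then a circular pseudo-Anosov flow with cross section $\Sigma$ and $[\Sigma]\in\intr(\cone(\sigma))$. By the uniqueness just proved, a homeomorphism of $M$ isotopic to the identity carries $\psi$ to a reparameterization of $\phi$, so the image of $\Sigma$ under this homeomorphism is a cross section of $\phi$ representing $\alpha$, as desired.
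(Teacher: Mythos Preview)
Your argument has a genuine circularity at its core. In the third paragraph you assert the reverse inclusion $\intr(\cone(\sigma))\subseteq\mathcal D_\psi$ for every circular pseudo-Anosov $\psi$ with a cross section in $\intr(\cone(\sigma))$, and then use this to find a \emph{common} cross-section class for $\phi$ and $\psi$ in order to prove uniqueness. But your justification for the reverse inclusion---``computing $\C_\psi$ from the pseudo-Anosov data via a Markov partition \dots\ and matching the result against $\cone(\sigma)$ using the characterization that $\cone(\sigma)$ is the locus where $-e_\psi$ and $x$ agree''---is not an argument: the Euler-class description of $\cone(\sigma)$ says nothing about pairings with closed-orbit classes, and Markov partitions do not hand you $\C_\psi^\vee=\cone(\sigma_{\LD})$ directly. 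Your fourth paragraph then tries to recover the reverse inclusion for $\phi$ \emph{from} uniqueness, closing the circle. Without an independent proof of one of these two statements, nothing is established beyond the easy inclusion $\mathcal D_\phi\subseteq\intr(\cone(\sigma))$.

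The paper breaks the circle differently, and this is the idea you are missing. It first proves a ``disjoint or equal'' lemma: if $\C_\R(F)\cap\C_\R(F')\neq\varnothing$ for two circular pseudo-Anosov flows, then the flows are conjugate by a homeomorphism isotopic to the identity (so in particular $\C_\R(F)=\C_\R(F')$). This step needs only a single common integral class, not the full equality $\mathcal D_\psi=\intr(\cone(\sigma))$. The Markov partition is then used, not to compute $\C_\phi$, but to show that $\C_\phi$ is a rational polyhedral cone; this guarantees that if $\C_\R(\phi)\subsetneq\intr(\cone(\sigma_{\LD}))$ then there is an integral class $v\in\partial\C_\R(\phi)\cap\intr(\cone(\sigma_{\LD}))$, and the suspension flow $\phi'$ for the fiber dual to $v$ has $\C_\R(\phi')$ meeting $\C_\R(\phi)$ without equaling it---contradicting disjoint-or-equal. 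Finally, the step you summarize as ``chasing the isotopies through'' is exactly where the boundary creates work: the paper must show that the obvious conjugating map $C$ between the two suspension flows acts trivially on $\pi_1$, which requires a separate lemma controlling how flow segments on $\partial M$ behave (the ``crisis lemma''), and then invokes Waldhausen's theorem to upgrade homotopy to isotopy. Your sketch does not address this.
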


 Thus we we will speak of \emph{the} \textbf{suspension flow of a fibered face}. Theorem \ref{fried's theorem} says that all the monodromies of fibrations coming from $\sigma$ are realized as first return maps of the suspension flow. Theorem \ref{fried's theorem} is proven by Fried in \cite{Fri79} in the case where $M$ is closed. The proof of this more general result, when the fiber possibly has boundary, does not seem to exist in the literature. For the reader's convenience we provide a proof in Appendix \ref{Fried appendix} (Theorem \ref{generalized Fried}).

Set $M=M_{g}$. Let $\sigma$ be the face of $B_x(M_{g})$ such that $[ Y]\in \cone (\sigma)$, let $\phi$ be the associated suspension flow, and let $\phi'=\phi|_{M_{g'}}$.

The lift $\wt{\phi'}$ of $\phi'$ to the universal cover $\wt{M_{g'}}$ of $M_{g'}$ is \textbf{product covered}, i.e. conjugate to the unit-speed flow in the $z$ direction on $\R^3$. Consequently the quotient of $\wt{M_{g'}}$ by the flowing action of $\R$, which we call the \textbf{flowspace of $\phi'$} and denote by $\flow(\phi')$, is homeomorphic to $\R^2$. In addition, $\flow(\phi')$ has 2 transverse (unmeasured) foliations which are the quotients of the lifts of the (2-dimensional) stable and unstable foliations of $\phi'$ to $\wt{M_{g'}}$.

Let $Z$ be another fiber of $M$ over $S^1$ such that $[Z]\in \cone(\sigma)$, with monodromy $h\colon Z\to Z$. Let $Z'$, $\wt{Z'}$, and $\wh{Z'}$ be obtained from $Z$ and $h$ in the same way that $Y'$, $\wt{Y'}$, and $\wh{Y'}$ were obtained from $Y$ and $g$.

By Theorem \ref{fried's theorem}, both $\wt{Y'}$ and $\wt{Z'}$ and their vertical and horizontal foliations can be identified with $\flow(\phi')$ and its foliations by forgetting measures. Hence we can identify $\wh{Y'}$ and $\wh{Z'}$ together with their vertical and horizontal foliations. The maximal rectangles from Gu\'eritaud's construction depend only on the vertical and horizontal foliations, and not on their transverse measures. The geodesics defining the edges of a tetrahedron do depend on the measures (and hence on $g$ and $h$), but for either pair of measures, the geodesics will be transverse to both the vertical and horizontal foliations. We see then that the triangles of the veering triangulation of $\wt{Y'}\times \R\cong \wt{Z'}\times \R\cong\wt{ M_{g'}}$ are well-defined up to isotopy. It follows that the veering triangulations of $g$ and $h$ are the same up to isotopy in $M_{g'}=M_{h'}=\intr(M)\setminus\{\text{singular orbits of $\phi$}\}$.
 
Synthesizing the above discussion, we have shown the following.

\begin{theorem}[Agol]\label{invarianceoftau}
Let $M$ be a compact hyperbolic 3-manifold, and suppose $Y$ and $Z$ are fibers of fibrations $M\to S^1$ with monodromies $g$ and $h$ such that $[Y],[Z]\in\cone(\sigma)$ for some fibered face of $B_x(M)$. Then the veering triangulations of $g$ and $h$ are combinatorially equivalent.

\end{theorem}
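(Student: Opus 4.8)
The plan is to observe that essentially all the work has already been done in the discussion preceding the statement, and to assemble it into a clean argument. The key point is that a veering triangulation of $M_{g'} = \intr(M) \setminus \{\text{singular orbits of }\phi\}$, where $\phi$ is the suspension flow of the fibered face $\sigma$ containing $[Y]$ and $[Z]$, is determined up to isotopy by the pair of transverse (unmeasured) foliations on the flowspace $\flow(\phi')$, together with the combinatorial recipe of Gu\'eritaud's construction. So the first step is to invoke Theorem \ref{fried's theorem}: since $[Y]$ and $[Z]$ both lie in $\cone(\sigma)$, the monodromies $g$ and $h$ are both first return maps of the \emph{same} flow $\phi$ on $M$ (the suspension flow of $\sigma$), up to reparameterization and conjugation by a homeomorphism isotopic to the identity. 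Hence $Y'$ and $Z'$ are both cross sections of $\phi' = \phi|_{M_{g'}}$, and $M_{g'}$ and $M_{h'}$ are canonically identified with $\intr(M)\setminus\{\text{singular orbits of }\phi\}$.

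Next I would use product-coveredness of $\wt{\phi'}$ to identify $\flow(\phi')$, together with its two transverse (unmeasured) foliations, with both $\wt{Y'}$ and $\wt{Z'}$ and their vertical/horizontal foliations, thereby identifying $\wh{Y'}$ with $\wh{Z'}$ as singular flat-like spaces equipped with the two transverse foliations (forgetting the transverse measures, which is the crucial point — the measures differ because $g \ne h$, but the foliations as unmeasured objects agree). Then I would run Gu\'eritaud's construction on this common underlying space: the maximal singularity-free rectangles depend only on the two transverse foliations and not on the transverse measures, so the indexing set of tetrahedra is the same for $g$ and for $h$. For each maximal rectangle $R$, the flattening map $f_R\colon t_R \to R$ sends the $\pi$-edges to the segments joining the singularities on the horizontal/vertical edges of $R$ (this depends only on the foliations), and sends the $0$-edges to geodesics in the singular flat structure; the geodesics depend on the measure, but for either choice of measure they are transverse to both foliations, so the two choices of flattening map are isotopic through maps of $t_R$ into $R$ whose restriction to each face stays combinatorially fixed. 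The face identifications in the construction are dictated purely by which faces have coincident images, so they match up under this isotopy.

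Putting these together: the triangles (hence tetrahedra, hence the whole complex) of the veering triangulation built from $g$ and the one built from $h$ are isotopic in $\wt{M_{g'}} \cong \wt{Y'}\times\R \cong \wt{Z'}\times\R$, equivariantly with respect to the deck action, and so they descend to isotopic triangulations of $M_{g'} = M_{h'}$. In particular the two layered veering triangulations are combinatorially equivalent, which by Agol's uniqueness statement is exactly the assertion that the veering triangulation of $g$ and the veering triangulation of $h$ coincide. The only step requiring genuine care is the isotopy of flattening maps: one must check that deforming the $0$-edge geodesics from the $g$-metric to the $h$-metric can be done keeping each edge transverse to both foliations and keeping the face-pairing pattern constant throughout, so that no combinatorial change (no Whitehead move) is forced during the deformation. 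This is where the transversality remark in the preceding discussion does the real work, and I would expect to spend most of the proof making that transversality-preserving isotopy explicit, the rest being bookkeeping that has already been carried out in the text above.
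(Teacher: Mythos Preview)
Your proposal is correct and follows essentially the same approach as the paper: indeed, the paper presents Theorem \ref{invarianceoftau} explicitly as a synthesis of the preceding discussion (``Synthesizing the above discussion, we have shown the following''), and that discussion is precisely the argument you outline---invoking Fried's theorem to identify the flow, identifying $\wh{Y'}$ and $\wh{Z'}$ via the flowspace, and observing that maximal rectangles depend only on the unmeasured foliations while the measure-dependent geodesic edges remain transverse to both foliations. Your proposal is a faithful reconstruction of the paper's own reasoning.
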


It therefore makes sense to speak of \emph{the} \textbf{veering triangulation of a fibered face} of $B_x(M)$.

\subsection{Some notation}\label{sec:notation}

We now fix some notation which will hold for the remainder of the paper.

Let $M$ be a compact hyperbolic 3-manifold, and let $\phi$ be a circular pseudo-Anosov flow on $M$.
 Let $c$ be the union of the singular orbits $c_1,\dots, c_n$ of $\phi$ and let $U_i$ be a small regular neighborhood of $c_i$. Let $V$ be a small regular neighborhood of $\partial M$ and put $U=V\cup \left(\bigcup_i U_i\right)$ and $\mr M= M\setminus U$. Let $\sigma$ be the (closed) fibered face of $B_x(M)$ determined by $\phi$, with associated veering triangulation $\tau$.

The homology long exact sequence associated to the triple $(M, U, \partial M)$ contains the sequence $H_2(U,\partial M)\xrightarrow{0} H_2(M,\partial M)\to H_2(M, U)$. By excision, $H_2(M, U)\cong H_2(\mr M, \partial\mr M)$. Hence there is an injective map 
\[
P\colon H_2( M, \partial M)\to H_2(\mr M, \partial\mr M). 
\]
At the level of chains, the map corresponds to sending a relative 2-chain $S$ to $S\setminus U$. We call $P$ the \textbf{puncturing map}, and if $\alpha\in H_2(M,\partial M)$ we often write $\mr\alpha$ to mean $P(\alpha)$.

\subsection{Veering triangulations, branched surfaces, and boundaries of fibered faces}\label{veering branched surface section}

Recall that a \textbf{branched surface} in $M$ is an embedded 2-complex transverse to $\partial M$ which is locally modeled on the quotient of a stack of disks $D_1,\dots, D_n$ such that for $i=1,\dots,n-1$, $D_i$ is glued to $D_{i+1}$ along the closure of a component of the complement of a smooth arc through $D_i$. The quotient is given a smooth structure such that the inclusion of each $D_i$ is smooth (see Figure \ref{branchedsurfacedef}). Branched surfaces were introduced in \cite{FloOer84} and experts will note that the above definition is different than the original; ours is the slightly more general one found in \cite{Oer86}.

\begin{figure}[h]
\centering
\includegraphics[height=1.5in]{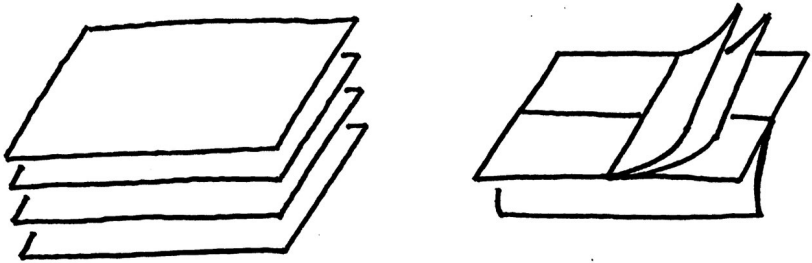}
\caption{A portion of a branched surface, obtained as the quotient of a stack of disks.}
\label{branchedsurfacedef}
\end{figure}

Let $B\subset M$ be a branched surface. The union of points in $B$ with no neighborhood homeomorphic to a disk is called the \textbf{branching locus} of $B$, and the components of the complement of the branching locus are called \textbf{sectors}. Let $N(B)$ be the closure of a regular neighborhood of $B$. Then $N(B)$ can be foliated by closed intervals transverse to $B$, and we call this the \textbf{normal foliation} of $N(B)$. If the normal foliation is oriented we say $B$ is an \textbf{oriented} branched surface. In this paper all  branched surfaces will be oriented. If $S\subset N(B)$ is a cooriented surface properly embedded in $M$ which is positively transverse to the normal foliation in the sense that its coorientation is compatible with orientation of the normal foliation, we say $S$ is \textbf{carried by} $B$. A surface carried by $B$ gives a system of nonnegative integer weights on the sectors of $B$ which is compatible with natural linear equations along the branching locus of $B$. Any system of nonnegative real weights $w$ satisfying these linear equations gives a 2-cycle and thus determines a homology class  $[w]\in H_2(M,\partial M)$. We say $[w]$ is \textbf{carried by} $B$. The collection of homology classes carried by $B$ is clearly a convex cone.

Put $\mr\tau=\tau\cap \mr M$. Then the 2-skeleton of $\mr\tau$ has the structure of a cooriented branched surface. We call this branched surface $B_{\mr\tau}$.

\begin{corollary}[Agol]\label{VTcarriesface}
Let $\sigma$ be the closed face of $B_x(M)$ determined by $\phi$, and let $\mr\sigma$ be the face of $B_x(\mr M)$ such that $P(\sigma)\subset\cone(\mr\sigma)$. The cone of classes in $H_2(\mr M, \partial\mr M)$ carried by $B_{\mr\tau}$ is equal to $\cone(\mr\sigma)$.
\end{corollary}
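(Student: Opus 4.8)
The plan is to show the two cones coincide by establishing inclusions in both directions, using the fact that $B_{\mr\tau}$ carries surfaces precisely when they can be built from the layered structure of $\tau$. First I would recall that for any fiber $Y$ with $[Y]\in\cone(\sigma)$, the monodromy $g$ is pseudo-Anosov and, by the construction reviewed in Section~\ref{canonical veering section}, the veering triangulation $\tau$ is layered on $Y'$. Layered-ness means precisely that $Y'$ (pushed slightly off the ideal vertices, i.e.\ its compact truncation $\mr Y = Y\cap\mr M$) is carried by $B_{\mr\tau}$: it appears as one of the ``horizontal'' surfaces interpolating the periodic splitting sequence. Since $\tau$ depends only on $\sigma$ (Theorem~\ref{invarianceoftau}) and every integral class in $\intr(\cone(\sigma))$ is represented by a fiber (Section~\ref{norm background}, Theorem~\ref{fried's theorem}), every such class, after puncturing, is carried by $B_{\mr\tau}$. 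Because the set of classes carried by $B_{\mr\tau}$ is a closed convex cone and $P$ is linear, it follows that $P(\cone(\sigma))$ is contained in the cone carried by $B_{\mr\tau}$; since the latter is contained in $\cone(\mr\sigma)$ for some face $\mr\sigma$ of $B_x(\mr M)$ (and $\mr\sigma$ is by definition the face containing $P(\sigma)$), one direction — that $\cone(\mr\sigma)$ is at least as big as the carried cone, and contains $P(\cone(\sigma))$ — is in hand once we identify the carried cone as full-dimensional inside $\cone(\mr\sigma)$.

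For the reverse inclusion, the key point is a Euler-characteristic / norm-minimizing argument. A surface $S$ carried by $B_{\mr\tau}$ with positive weights inherits, from the taut structure of $\tau$, a combinatorial formula for $\chi_-(S)$: each tetrahedron contributes in a controlled way, and because $\tau$ is taut the carried surface is automatically incompressible and norm-minimizing in $\mr M$, with $-\chi(S)$ equal to the evaluation of the relative Euler class $e_{TY}$ (equivalently $-e_\phi$) on $[S]$. I would make this precise using the discussion in Section~\ref{norm background} that $\cone(\sigma)$ is exactly where $x$ and $-e_{TY}$ agree, together with the analogous statement in $\mr M$. Concretely: if $w$ is a nonnegative weight system on $B_{\mr\tau}$, then $[w]$ is represented by a taut surface carried by $B_{\mr\tau}$, hence $x_{\mr M}([w]) = -e_{\phi'}([w])$; this forces $[w]\in\cone(\mr\sigma)$ by the characterization of the fibered cone. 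Thus the carried cone is contained in $\cone(\mr\sigma)$, and since it is a top-dimensional subcone (it contains the interior class $P([Y])$) sharing the face $\mr\sigma$, the two cones are equal.

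The main obstacle I anticipate is the careful bookkeeping in the Euler-characteristic argument: verifying that an arbitrary surface carried by $B_{\mr\tau}$ — not just the fibers $\mr Y$ themselves — realizes the Thurston norm of its homology class, i.e.\ that the taut structure of $\tau$ propagates to \emph{all} carried surfaces, including those with weights not coming from a single fibration. This is where one must invoke (or reprove) Agol's observation that the 2-skeleton of a taut ideal triangulation is a taut branched surface in the sense of \cite{Lac00}: every surface it carries is incompressible and Thurston-norm-minimizing, because the taut angle structure gives a combinatorial Gauss–Bonnet certificate. Once that is granted, the equality of cones is a formal consequence of the matching linear functionals $x$ and $-e_\phi$ on both sides of the puncturing map. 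A secondary point requiring care is the interplay between $\partial M$ and the excised neighborhoods $U_i$ of singular orbits: one must check that puncturing along $c$ does not lose or create norm, which follows because the $c_i$ are the singular orbits and a generic norm-minimizing surface in $M$ meets each $U_i$ in meridian disks or not at all, matching the way $B_{\mr\tau}$ sits in $\mr M$.
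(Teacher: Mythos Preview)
Your argument has a genuine gap in the inclusion $\cone(\mr\sigma)\subset\text{(carried cone)}$. You only show that $P(\cone(\sigma))$ is carried: you take fibers $Y$ of $M$, puncture them, and observe that $\mr Y$ is carried by $B_{\mr\tau}$. But $P(\cone(\sigma))$ is a \emph{proper, lower-dimensional} subcone of $\cone(\mr\sigma)$ whenever $\phi$ has singular orbits, since drilling each singular orbit increases $b_1$ and hence $\dim H_2(\mr M,\partial\mr M) > \dim H_2(M,\partial M)$. Your concluding sentence (``since it is a top-dimensional subcone \dots\ the two cones are equal'') does not rescue this: a closed convex cone contained in $\cone(\mr\sigma)$ and containing an interior point need not equal $\cone(\mr\sigma)$; containing one interior point does not even make it full-dimensional.

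The paper's fix is to work directly with fibers of $\mr M$ (equivalently of the cusped manifold $M'$), not with punctured fibers of $M$. Since $\mr\sigma$ is a fibered face of $B_x(\mr M)$, every integral class in $\intr(\cone(\mr\sigma))$ is represented by a fiber $\Sigma$ of $\mr M$. Theorem~\ref{invarianceoftau} (applied to $M'$) says $\tau$ is layered on $\intr(\Sigma)$ for \emph{every} such fiber, so $[\Sigma]$ is carried. This immediately gives all integral, hence all rational, classes in $\intr(\cone(\mr\sigma))$; then one invokes that $B_{\mr\tau}$ is a homology branched surface (closed transversal through every point) to conclude the carried cone is closed, and a density argument finishes. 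Your Euler-class/tautness argument for the reverse containment $\text{(carried cone)}\subset\cone(\mr\sigma)$ is correct in spirit---it is essentially Lackenby's result that carried surfaces are taut, forcing $x$ to be linear on the carried cone---and the paper leaves this direction implicit as well.
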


\begin{proof}
Let $\Sigma$ be a fiber of $M$ such that $[\Sigma]\in\cone(\mr\sigma)$. By Theorem \ref{invarianceoftau}, $\tau$ can be built as a layered triangulation on $\intr(\Sigma)$. It follows that $[\Sigma]$ is carried by $B_{\mr\tau}$. Therefore any integral class in $\intr(\cone(\mr\sigma))$ is carried, so every rational class in $\intr(\cone(\mr\sigma))$ is carried. 

We can find a closed oriented transversal through each point of $B_{\mr\tau}$, so $B_{\mr\tau}$ is a \textbf{homology branched surface} in the sense of \cite{Oer86}; it follows that the cone of classes it carries is closed (see \cite{Lan18} for details). Since each class in $\cone(\mr\sigma)$ is approximable by a sequence of rational classes in the interior of the cone, the proof is finished.
\end{proof}

\subsection{Relating $\partial\mr\tau$ and $\phi$}

\begin{figure}[h]
\centering
\includegraphics[height=1in]{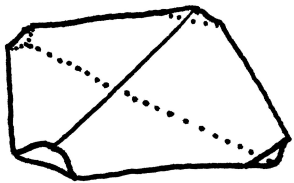}
\caption{A truncated taut tetrahedron. The coorientation is such that the solid edge with angle $\pi$ is on top.}
\label{truncatedtauttetrahedron}
\end{figure}

The complex $\mr\tau$ is a decomposition of $\mr M$ into truncated taut tetrahedra. Let $t$ be a taut tetrahedron of $\tau$, and let $t'$ be the truncation of $t$ obtained by deleting $t\cap U$. The \textbf{top}, \textbf{bottom}, and \textbf{top} and \textbf{bottom $\pi$-edges of $t'$} are defined to be the restrictions to $t'$ of the corresponding parts of $t$.
Each of the four faces of $t'$ corresponding to ideal vertices of $t$ is a triangle with two interior angles of 0 and one of $\pi$, which we call a \textbf{flat triangle}.
Because the faces of $t$ are cooriented, a flat triangle inherits a coorientation on its edges. We say a flat triangle which is cooriented outwards (respectively inwards) at its $\pi$ vertex is an \textbf{upward} (resp. \textbf{downward}) flat triangle, as in Figure \ref{upwardanddownwardtriangles}. The upwards flat triangles of $t'$ correspond to the ideal vertices connected by the top $\pi$-edge of $t'$.

\begin{figure}[h]
\centering
\includegraphics[height=.4in]{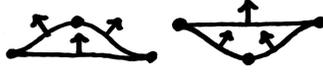}
\caption{Upward (left) and downward (right) flat triangles.}
\label{upwardanddownwardtriangles}
\end{figure}

The flat triangles of $\mr \tau$ give a triangulation of $\partial M$. Some of the combinatorics of this triangulation are described in \cite{Gue15} and \cite{Lan18}.

The union $u$ of all upward flat triangles is a collection of annuli. The triangulation restricted to each annulus component of $u$ has the property that each edge of a flat triangle either traverses the annulus or lies on the boundary of the annulus. The same holds for the union $d$ of all downward flat triangles. A component of $u$ is called an \textbf{upward ladder} and a component of $d$ is called a \textbf{downward ladder}. We call the boundary components of ladders \textbf{ladderpoles}. 
\begin{figure}[h]
\centering
\includegraphics[height=4.4in]{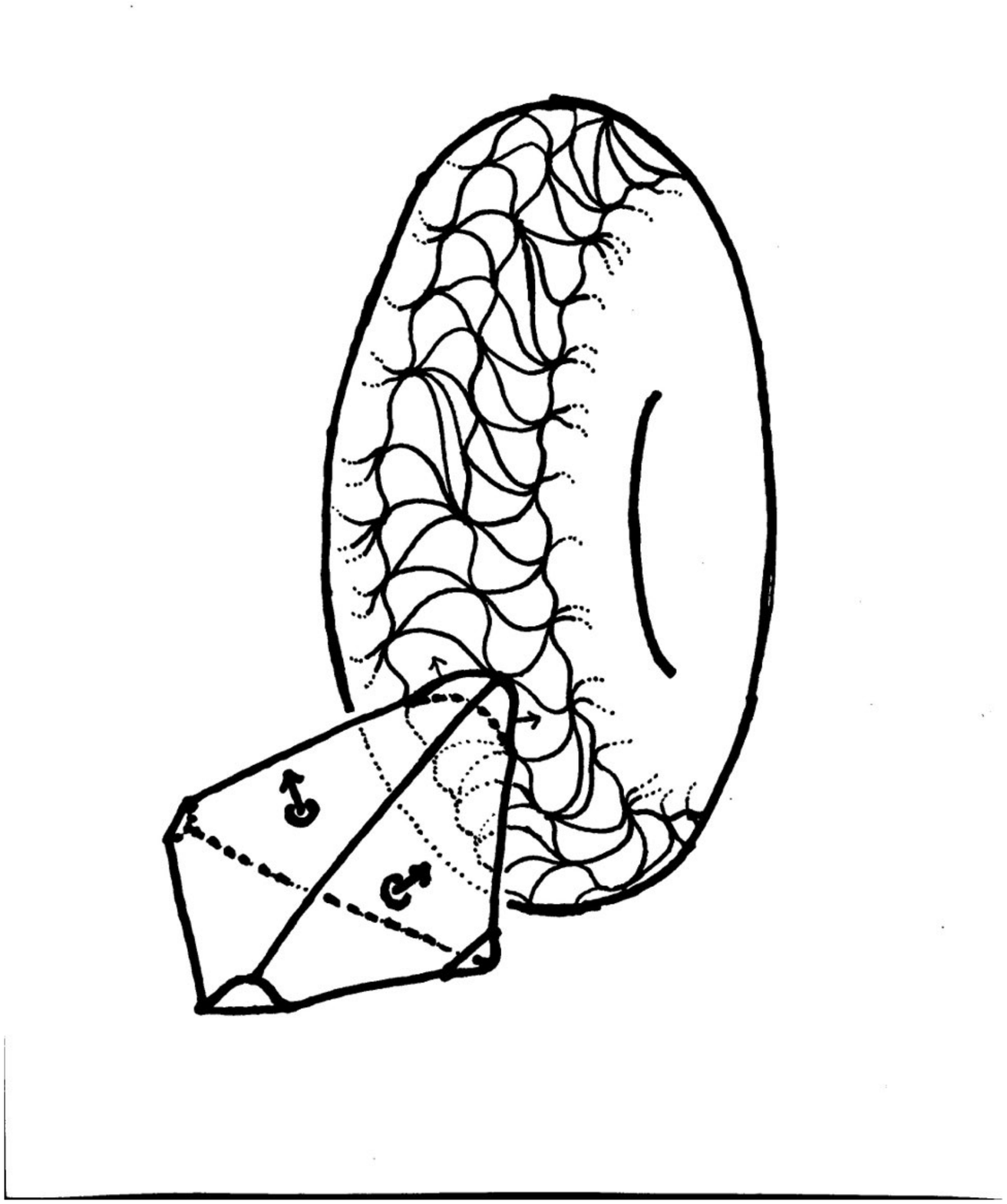}
\caption{A truncated taut tetrahedron meeting a component of $\partial\mr M$ in an upward flat triangle. An upward and downward ladder are shown. The top $\pi$-edge of the truncated taut tetrahedron must be left veering since it corresponds to a vertex in the right ladderpole of an upward ladder.}
\label{tetrahedronandtraintrack}
\end{figure}

The 1-skeleton of this triangulation by flat triangles of $\partial \mr M$ is a cooriented train track, which we call $\partial \mr \tau$.
We define a notion of left and right on each branch $e$ of $\partial\mr\tau$: orient $\partial M$ inwards, and map some neighborhood of $e$ homeomorphically to $\R^2$ so that $e$ is identified with $[-1,1]\times \{0\}$ and the pushforward of $e$'s coorientation points in the positive $y$ direction. Define the \textbf{left} (resp. \textbf{right}) \textbf{switch of} $e$ to be the preimage of $-1$ (resp. 1). We orient each branch of $\partial\mr\tau$ from right to left, and this consistently determines an orientaion on $\partial\mr\tau$. If an oriented curve is carried by $\partial\mr\tau$ such that orientations agree, we say $\gamma$ is \textbf{positively carried} by $\partial\mr\tau$. By our choice of orientation if $S$ is a surface carried by $B_{\mr\tau}$ then its boundary, given the orientation induced by an outward-pointing vector field on $ \partial\mr M$, is positively carried by $\partial\mr\tau$.

We call branches of $\partial\mr\tau$ contained in ladderpoles \textbf{ladderpole branches}, and branches that traverse ladders \textbf{rungs}.
We define the \textbf{left} (resp. \textbf{right}) \textbf{ladderpole} of a ladder to be the ladderpole containing the left (resp. right) boundary switches of its rungs. A closed oriented curve positively carried by $\partial \mr\tau$ and traversing only ladderpole branches is called a \textbf{ladderpole curve}.

Note that a switch of $\partial\mr\tau$ corresponds to an edge of $\tau$. The combinatorics of the flat and veering triangulations are related by the following Lemma, the proof of which is elementary.

\begin{lemma}\label{lem:veeringrule}
Let $v$ be a switch of $\partial\mr\tau$ corresponding to an edge $e$ of $\tau$. If $v$ lies in the left ladderpole of an upward ladder then $e$ is right veering. If $v$ lies in the right ladderpole of an upward ladder then $e$ is left veering.
\end{lemma}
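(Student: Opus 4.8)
The plan is to unwind the two relevant local pictures — the combinatorics of a truncated taut tetrahedron $t'$ near a ladder, and the definition of left/right veering for the $\pi$-edge $e$ of $t$ — and check that they match. First I would fix a switch $v$ of $\partial\mr\tau$ lying in the left ladderpole of an upward ladder and let $e$ be the edge of $\tau$ corresponding to $v$. Since $v$ lies in an \emph{upward} ladder, every flat triangle of $\partial\mr\tau$ incident to $v$ is an upward flat triangle, and each such triangle is the truncation-face of a tetrahedron $t$ for which $e$ is the \emph{top} $\pi$-edge (by the remark in the excerpt: ``The upwards flat triangles of $t'$ correspond to the ideal vertices connected by the top $\pi$-edge of $t'$''). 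So the claim reduces to: if $v$ is on the left ladderpole, then for every tetrahedron $t$ having $e$ as its top $\pi$-edge, $t$ is of type $R$ when $e$ is distinguished; equivalently, $e$ is right veering.

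Next I would set up coordinates as in the definition of left/right on a branch: orient $\partial M$ inward and flatten a neighborhood of a rung incident to $v$ so that the coorientation points in the positive $y$-direction, with the left switch at $-1$ and the right switch at $+1$. In this chart, saying $v$ is the \emph{left} switch of the rungs of the ladder means $v$ sits at the $-1$ end. Now I would draw the upward flat triangle $t'\cap\partial M$ in this chart: it has its $\pi$-vertex at $v$ (since the $\pi$-edge of an upward triangle is cooriented outward at the $\pi$-vertex, and upward triangles in an upward ladder have their $\pi$-vertices alternating along the two ladderpoles — here at $v$ on the left pole), and its two $0$-edges running off to the right ladderpole. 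Pulling this back through the flattening map $f_{R}$ (equivalently, just reading off the taut tetrahedron $t$ with its top $\pi$-edge $e$ specified and its coorientations), one sees which of the two combinatorial types $L$, $R$ of Figure \ref{RandLtetrahedra} occurs: the handedness is exactly determined by whether the flat triangle at $v$ opens to the right or to the left, and ``opens to the right'' (i.e. $v$ on the left ladderpole) is the type-$R$ picture. The symmetric statement — $v$ on the right ladderpole of an upward ladder forces the flat triangle at $v$ to open to the left, hence type $L$, hence $e$ left veering — follows by the mirror-image argument, reflecting the chart across the $y$-axis.

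The main obstacle is purely bookkeeping: one must be careful and consistent about (i) the orientation conventions — the inward orientation of $\partial M$, the page-coorientation convention for faces of taut tetrahedra, and the right-to-left orientation of branches of $\partial\mr\tau$ — and (ii) matching the chirality in the flat-triangle picture (``$v$ on the left pole'' $\leftrightarrow$ ``triangle opens rightward'') with the chirality labelling $L$ versus $R$ in Figure \ref{RandLtetrahedra}. Since all of these conventions are pinned down in the excerpt, the verification amounts to carefully superimposing Figures \ref{RandLtetrahedra}, \ref{upwardanddownwardtriangles}, and \ref{tetrahedronandtraintrack}; there is no nontrivial mathematical content beyond this case-check, which is why the lemma is asserted to be elementary. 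I would present it by giving the single detailed picture for the left-ladderpole case and invoking the mirror symmetry for the other.
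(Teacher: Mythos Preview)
Your reduction contains a genuine error. The veering type of an edge $e$ is, by definition, determined by the tetrahedra for which $e$ is a \emph{$0$-edge}; the types $L$ and $R$ in Figure \ref{RandLtetrahedra} are only defined for a taut tetrahedron with a distinguished $0$-edge. So the sentence ``$t$ is of type $R$ when $e$ is distinguished'' does not even parse when $e$ is the top $\pi$-edge of $t$, and in any case those tetrahedra are not the ones that decide the veer of $e$.

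The error enters in your use of the quoted remark. That remark says that the two upward flat triangles of $t'$ sit at the two ideal vertices joined by the top $\pi$-edge of $t$. It does \emph{not} say that every edge of $\tau$ ending at such a vertex is the top $\pi$-edge. A flat triangle has one $\pi$-vertex and two $0$-vertices, corresponding respectively to the $\pi$-edge and the two $0$-edges of $t$ meeting that ideal vertex. So from ``the flat triangle at $v$ is upward'' you can only conclude that the ideal vertex in question is an endpoint of the top $\pi$-edge of $t$; you cannot conclude that $e$ is that $\pi$-edge. If $v$ happens to be a $0$-vertex of that flat triangle, then $e$ is a $0$-edge of $t$ --- which is exactly the situation you want, but not the one you argued for. (Also, since $v$ lies on a ladderpole, it is on the common boundary of an upward and a downward ladder, so not every flat triangle at $v$ is upward.)

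The fix is to look instead at a flat triangle in the upward ladder for which $v$ is a $0$-vertex; the corresponding tetrahedron $t$ then has $e$ as a $0$-edge, and one can read off from the picture (with the orientation conventions you listed) that $t$ is of type $R$ with $e$ distinguished. Since $\tau$ is veering, one such $t$ suffices. The paper itself does not give a proof --- it simply asserts the lemma is elementary and points to Figure \ref{tetrahedronandtraintrack} --- so once corrected your picture-check is exactly what is intended.
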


Consider a singular leaf $L$ of the stable foliation of $\phi$ meeting a singular orbit $c_i$ of $\phi$. Topologically $L$ is the quotient of a half-closed annulus $A$ by a map which wraps $\partial A$ around $c_i$ some finite number of times.  The flow lines of $\phi|_L$ converge in forward time to $c_i$. Since $L$ is dense in $M$, the intersection $L\cap \closure(U_i)$ has many components; we will call the component containing $c_i$ a \textbf{stable flow prong} of $c_i$. An \textbf{unstable flow prong} of $c_i$ is defined symmetrically with a singular leaf of the unstable foliation of $\phi$.

\begin{lemma}\label{ladderseparatrixstructure}
A stable flow prong of $c_i$ intersects $\partial U_i$ in the interior of an upward ladder. Symmetrically, an unstable 2-prong of $c_i$ intersects $\partial U_i$ in the interior of a downward ladder.
\end{lemma}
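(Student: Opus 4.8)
The plan is to use Gu\'eritaud's construction of $\tau$ to identify, very concretely, how a stable flow prong of $c_i$ sits relative to the flat triangulation of $\partial U_i$. First I would pass to the flowspace picture: working in $\wt{M_{g'}}$ with the product-covered flow $\wt{\phi'}$, the singular orbit $c_i$ corresponds (after quotienting by the flow) to a singular point $p$ of the singular flat structure on $\flow(\phi')\cong\wh{Y'}$, with some number $k\ge 3$ of prongs of the vertical foliation emanating from $p$ and the same number of horizontal prongs interleaved with them. A stable flow prong of $c_i$ is, in this picture, the germ of a vertical separatrix at $p$ together with the flow direction; so it suffices to understand which maximal rectangles of Gu\'eritaud's construction ``see'' that vertical separatrix on their boundary, and how the associated taut tetrahedra glue up around $c_i$ to produce the flat triangles on $\partial U_i$.

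The key steps, in order: (1) Recall that each maximal singularity-free rectangle $R$ has exactly one singularity in the interior of each of its four edges, and that the tetrahedron $t_R$ has its top $\pi$-edge joining the two singularities on the horizontal edges of $R$ and its bottom $\pi$-edge joining the two singularities on the vertical edges of $R$. (2) Fix the vertical separatrix $s$ at $p$ defining the given stable flow prong. The maximal rectangles whose interior meets $s$ near $p$ are precisely those having $p$ as the singularity in the interior of one of their \emph{vertical} edges (if $p$ is on a vertical edge of $R$, the vertical leaf through $p$ runs along that edge, i.e. is the separatrix $s$). For each such $R$, $p$ is then an endpoint of the bottom $\pi$-edge of $t_R$, so at the truncated vertex of $t_R$ over $p$ the flat triangle has its $\pi$-vertex at the \emph{bottom} $\pi$-edge — meaning the coorientation at the $\pi$-vertex points \emph{into} $t_R$. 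Wait: I need to track the coorientation convention carefully here. The flat triangle at a vertex lying on the bottom $\pi$-edge is the one cut off near the bottom $\pi$-edge; by the definition of upward/downward flat triangles (cooriented outward at the $\pi$-vertex $=$ upward), and since the top of a taut tetrahedron is where coorientations point out, the flat triangle at a vertex of the bottom $\pi$-edge is cooriented \emph{outward} at its $\pi$-vertex precisely when one traces through Figure \ref{truncatedtauttetrahedron} — so these are upward flat triangles. (3) Therefore every flat triangle of $\mr\tau$ meeting $\partial U_i$ along $c_i$'s stable prong is an upward flat triangle, and as $R$ ranges over all maximal rectangles with $p$ on a vertical edge, these flat triangles fit together along rungs (the rungs being the truncations of the edges of the $t_R$ running from $p$ out to the other singularities) to sweep out a full annular component of $u$. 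Hence the stable flow prong meets $\partial U_i$ in the interior of an upward ladder. (4) The statement for unstable prongs follows by the symmetry of Gu\'eritaud's construction under interchanging the vertical and horizontal foliations, which swaps top and bottom $\pi$-edges and hence swaps upward and downward ladders.

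The main obstacle I anticipate is step (2)–(3): correctly matching the local picture at a singularity $p$ of the flat structure — namely, enumerating the cyclic family of maximal rectangles incident to a chosen vertical separatrix and checking that the corresponding flat triangles glue edge-to-edge into a single ladder annulus rather than, say, several ladders or a partial ladder — together with nailing down the coorientation bookkeeping that distinguishes ``upward'' from ``downward.'' This is exactly the kind of combinatorial-of-the-flat-structure argument the paper flags as ``elementary'' (cf. Lemma \ref{lem:veeringrule}), but it does require being scrupulous about conventions; a clean way to organize it is to first treat the model case of a $3$-pronged singularity explicitly and then observe the general $k$-pronged case is the evident generalization, with each of the $k$ vertical separatrices giving rise to one ladder. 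One should also remark that no rung of the resulting ladder lies on $c_i$ itself — $c_i$ is transverse to each $t_R$ — so the prong meets the \emph{interior} of the ladder, as claimed.
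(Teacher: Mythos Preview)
Your overall strategy---pass to Gu\'eritaud's picture, locate the flat triangles that the stable prong pierces, and check they are upward---is reasonable and close in spirit to what the paper does, but step (2) contains two genuine errors that happen to cancel numerically while leaving the logic unsound.

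First, your identification of the relevant rectangles is backwards. If $p$ lies on a \emph{vertical} edge of a maximal rectangle $R$, then the vertical separatrix $s$ runs \emph{along the boundary} of $R$, not into its interior; the interior of $R$ is entered instead by a horizontal separatrix at $p$. The rectangles whose interior actually meets $s$ near $p$ are those with $p$ on a \emph{horizontal} edge, so that $s$ enters $R$ perpendicularly. Second, your coorientation bookkeeping is wrong: the paper states explicitly that the upward flat triangles of a truncated tetrahedron correspond to the ideal vertices on the \emph{top} $\pi$-edge, not the bottom one. So ``$p$ on a vertical edge $\Rightarrow$ $p$ on the bottom $\pi$-edge $\Rightarrow$ upward flat triangle'' has a correct middle inference sandwiched between two wrong ones. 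The correct chain is: $p$ on a horizontal edge $\Rightarrow$ $p$ on the top $\pi$-edge $\Rightarrow$ upward flat triangle.

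Even after repairing this, your step (3) is only asserted. You still owe an argument that the upward flat triangles so produced fill out exactly one ladder annulus and that the prong never meets a ladderpole. The paper handles this by going in the opposite direction: it fixes a ladderpole (say the left ladderpole of an upward ladder), lifts the incident triangles to $\wh{Y'}$, and uses the slope/veer correspondence together with the observation that two edges of a single triangle at $p$ fit inside one singularity-free rectangle---hence inside a single quadrant---to confine the entire ladderpole to one quadrant at $p$. Once every ladderpole is trapped in a quadrant, no ladderpole can cross a vertical separatrix, and a count of quadrants versus ladders pins each vertical separatrix to the interior of an upward ladder. That confinement-to-a-quadrant step is the real content, and your sketch does not supply a substitute for it.
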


\begin{figure}[h]
\centering
\includegraphics[height=2.5in]{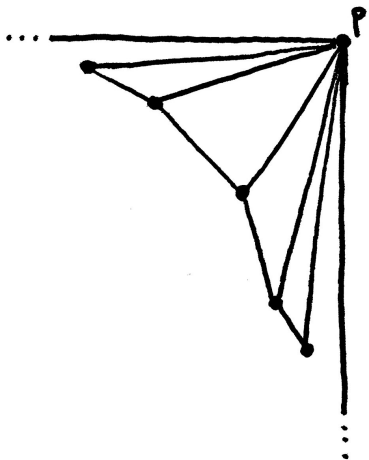}
\caption{Four triangles of $\wh T$, lying in a single quadrant bounded by vertical and horizontal leaves which meet at the singular point $p$ of $\wh {Y'}$.}
\label{fig_ladderseparatrixstructure}
\end{figure}

\begin{proof}
Fix a fiber $Y$ of $M$ with $[Y]\in\cone(\sigma)$, and let $\wt{Y'}$ and $\wh{Y'}$ be as in Section \ref{canonical veering section}. Let $M'=\intr(M)\setminus \{\text{singular orbits of $\phi$}\}$.

Pick a singular orbit $c_i$ of $\phi$ and consider the left ladderpole $\ell$ of an upward ladder of $U_i$. The vertices of $\ell$ define a family $T$ of triangles of $\tau$. Let $T'=T\cap U_i$, let $\wt {T'}$ be a component of the lift of $T'$ to $\wt{M'}$, the universal cover of $M'$, let $\wt T$ be the union of triangles in the veering triangulation of $\wt{M'}$  intersecting $\wt{T'}$, and let $\wh T$ be the closure in $\wh{Y'}$ of the projection of $\wt T$. This is a bi-infinite sequence of triangles, each sharing an edge with the next, and all sharing a single vertex at a singular point $p$ of $\wh{Y'}$.

The vertical and horizontal foliations define infinitely many quadrants each meeting $p$ in a corner of angle $\frac{\pi}{2}$, which are each bounded by one vertical and one horizontal leaf. We claim $\wh T$ lives in only one of these quadrants, a situation depicted in Figure \ref{fig_ladderseparatrixstructure}.

In $\wh{Y'}$, the edges of positive slope are right veering while the edges of negative slope are left veering. Hence every edge in $\wh T$ meeting $p$ lies in a quadrant with horizontal left boundary leaf and vertical right boundary leaf (where our notion of left and right is determined looking at $p$ from inside $\wt{Y'}$).

Fix one of the triangles of $\wh T$, defined by edges $e_1$ and $e_2$ meeting $p$. The two edges lie in the interior of a singularity-free rectangle with vertical and horizontal sides containing $p$ in its boundary. Such a rectangle lies in the union of two adjacent quadrants of $p$, only one of which can have horizontal left boundary and vertical right boundary. Therefore $e_1$ and $e_2$ lie in the same quadrant, and all the edges of $\wh T$ lie in the same quadrant by induction.

Applying this analysis to each ladderpole yields the result.
\end{proof}

\begin{figure}[h]
\centering
\includegraphics[height=2.5in]{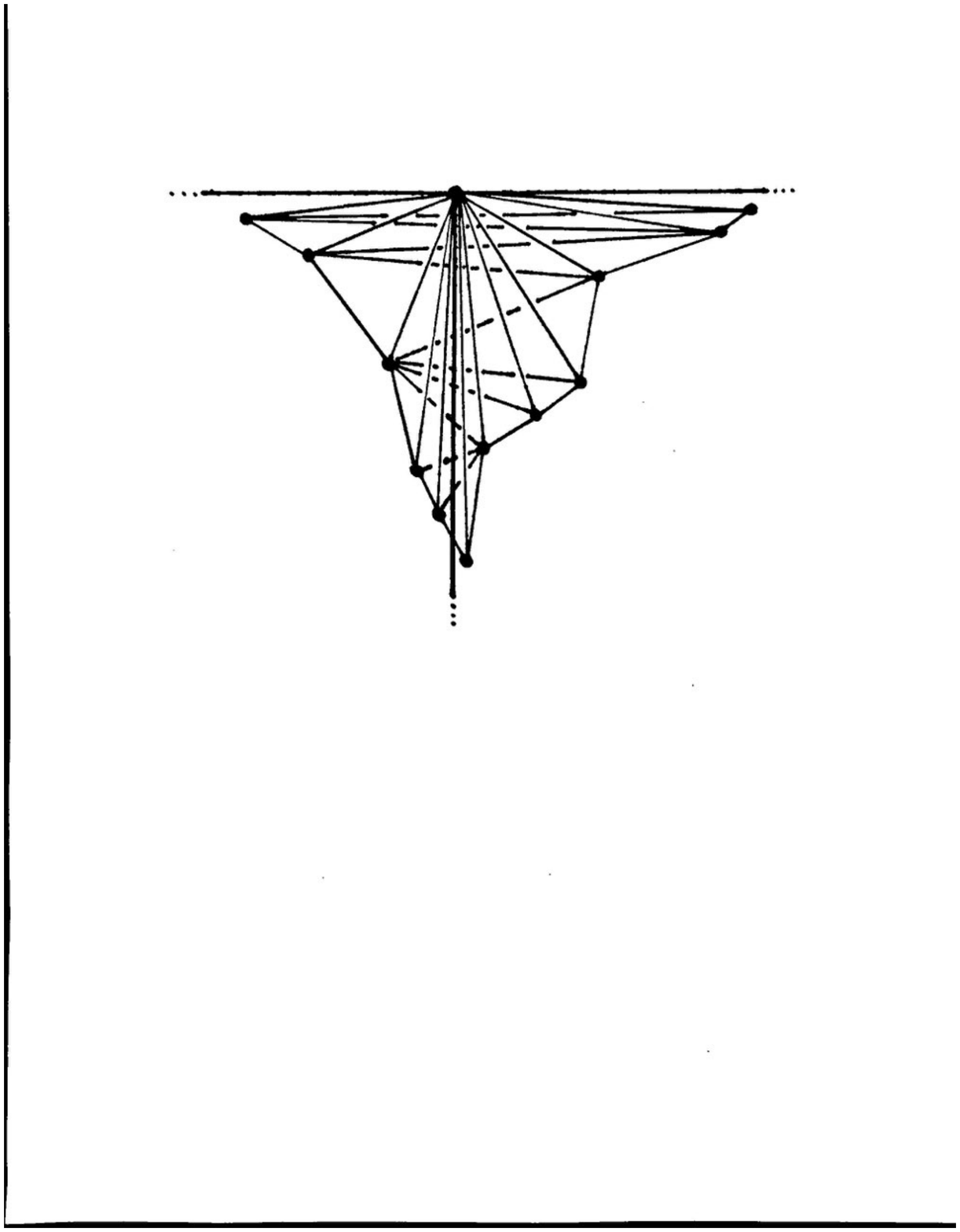}
\quad
\includegraphics[height=2in]{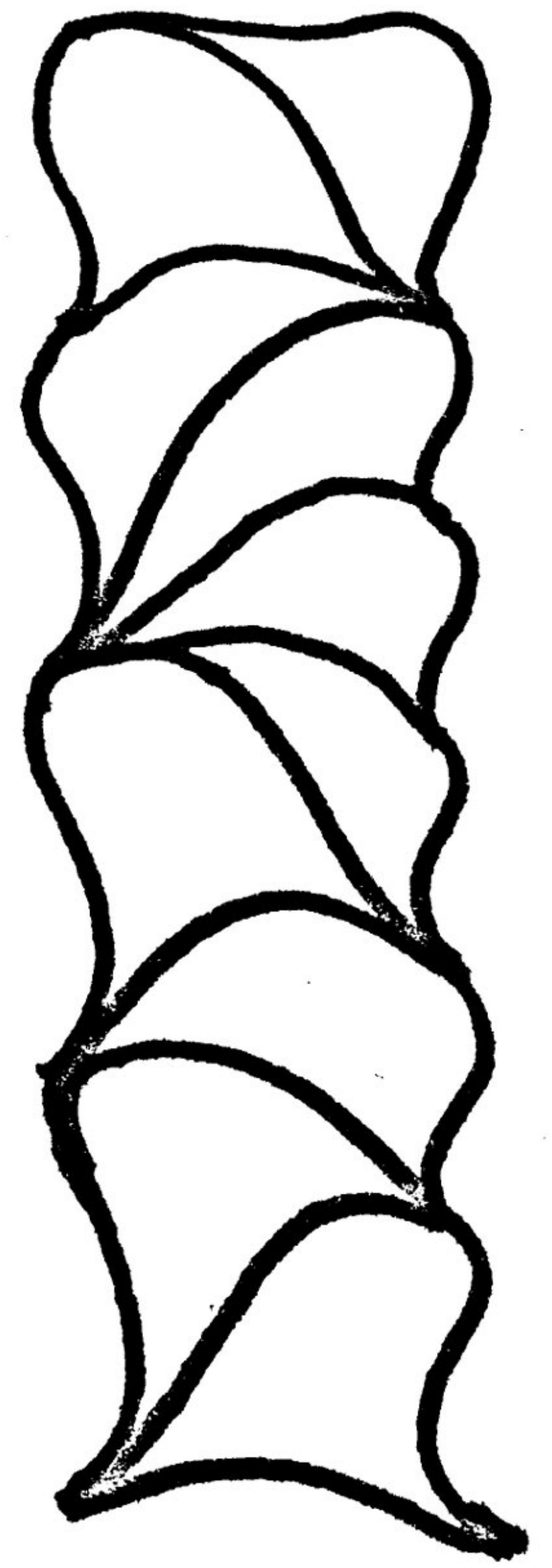}
\caption{These 10 taut tetrahedra (left) define this part of an upward ladder (right) on the torus coming from the top, central singularity on the left.
}
\label{upwardladder}
\end{figure}

\section{Almost transverse surfaces}

\subsection{Dynamic blowups}

We now describe the process of dynamically blowing up a singular periodic orbit $\gamma$ of a pseudo-Anosov flow $\phi$, which can be thought of as replacing a singular orbit by the suspension of a homeomorphism of a tree. For more details, the reader can consult \cite{Mos92b, Mos91, Mos90}.

Let $q\in \mathbb N$, $q\ge 3$. Define a \textbf{pseudo-Anosov star} to be a directed tree $T$ embedded in the plane with $2q$ edges meeting at a central vertex $v$, such that the orientations of edges around $v$ alternate between inward and outward with respect to $v$. We say a directed tree $T^\sharp$ is a \textbf{dynamic blowup} of $T$ if the closed neighborhood of each vertex of $T^\sharp$ is a pseudo-Anosov star, and there exists a cellular map $\pi\colon T^\sharp\to T$ preserving edge orientations such that $\pi$ is injective on the complement of $\pi^{-1}(v)$. See Figure \ref{exampleblowups} for two examples.

\begin{figure}[h]
\centering
\includegraphics[height=1.5in]{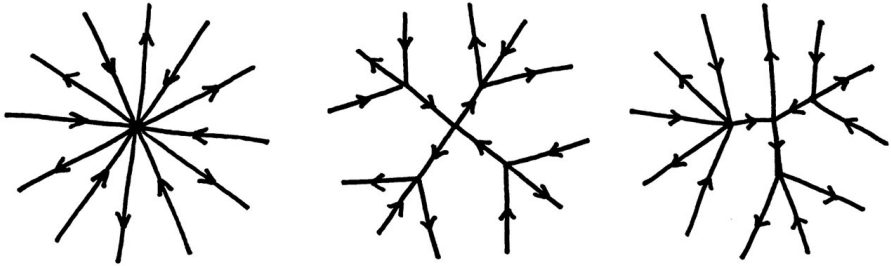}
\caption{A pseudo-Anosov star (left) and 2 possible dynamic blowups}
\label{exampleblowups}
\end{figure}

Let $\gamma$ be a singular periodic orbit of $\phi$ meeting $q$ stable and $q$ unstable flow prongs, and suppose $\phi$ rotates the flow prongs by $2\pi \cdot \frac{p}{q}$ traveling once around $\gamma$, where $\gcd(p,q)=1$. 

The intersection of the flow prongs of $\gamma$ with a local cross section of $\phi$ gives a pseudo-Anosov star $T$ with $2q$ edges, with each edge oriented according to whether points in that flow prong spiral towards or away from $\gamma$ in forward time. Let $T^\sharp$ be a dynamic blowup of $T$ that is invariant under rotation by $2\pi \cdot \frac{p}{q}$, and let $G$ be the preimage of the central vertex of $T$ under the collapsing map $\pi\colon T^\sharp\to T$.

There is a flow $\phi^\sharp$ on $M$ which replaces $\gamma$ by the suspension of a homeomorphism $h\colon G\to G$ with the following properties. Each edge $E$ of $G$ is mapped by $h$ to its image under rotation by $2\pi \cdot \frac{p}{q}$, and $h^q(E)$ fixes vertices and moves interior points in the direction $E$ inherits from $T^\sharp$.

The orbit of $G$ under $\phi^\sharp$ is a complex of annuli $A$ invariant under $\phi^\sharp$. A point interior to an annulus of $A$ spirals away from one boundary circle and towards the other in forward time, and the boundaries of annuli are closed orbits of $\phi^\sharp$. The flows $\phi^\sharp$ and $\phi$ are semiconjugate via a map collapsing $A$ to $\gamma$ which is injective on the complement of $A$.

We say $\phi^\sharp$ is obtained from $\phi$ by \textbf{dynamically blowing up} $\gamma$. A flow obtained from $\phi$ by dynamically blowing up some collection of singular orbits is a \textbf{dynamic blowup} of $\phi$. 

The effect of dynamically blowing up a singular orbit is to pull apart its flow prongs, otherwise leaving the dynamics of the flow unchanged. See Figure \ref{3dblowup}.

\begin{figure}[h]
\centering
\includegraphics[height=2in]{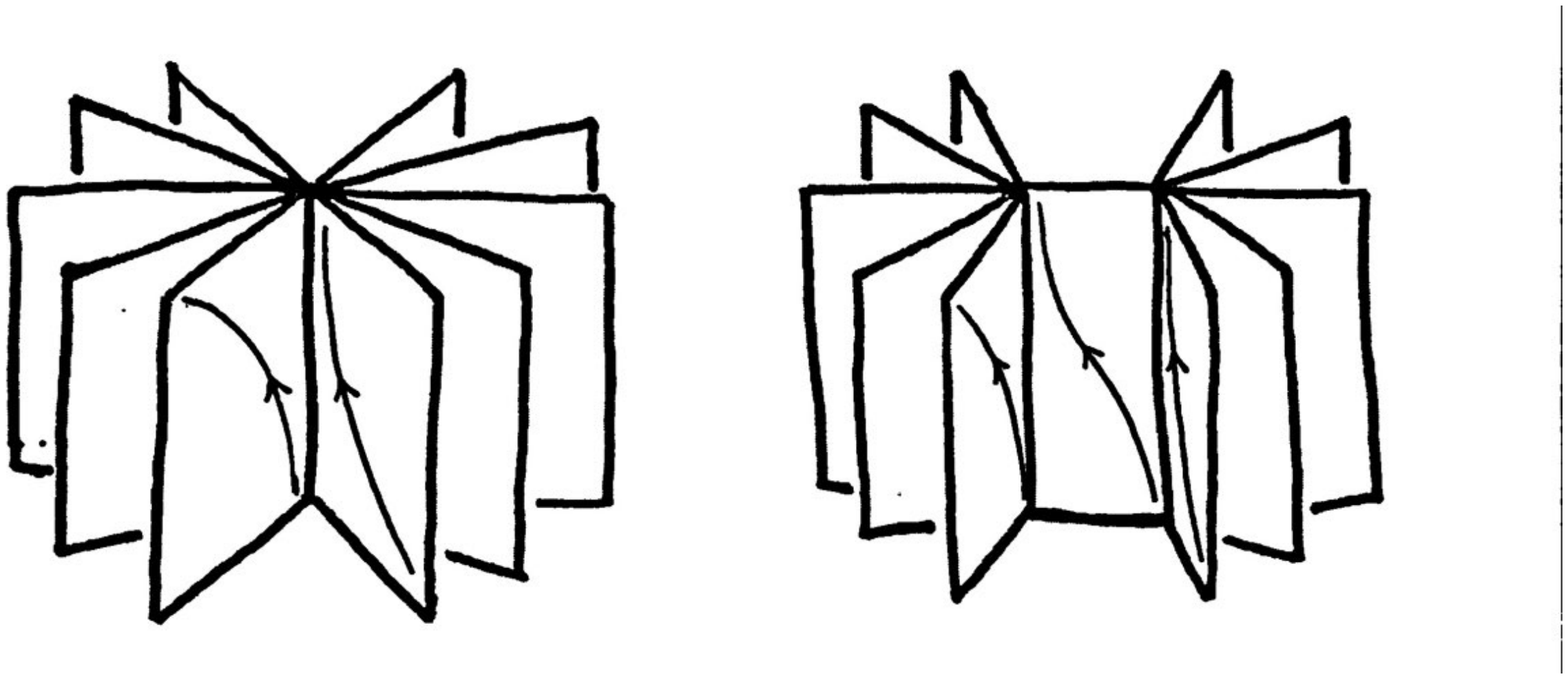}
\caption{A dynamic blowup pulls apart the flow prongs of a singular orbit .}
\label{3dblowup}
\end{figure}

\subsection{Statement of Transverse Surface Theorem, and Mosher's approach}\label{TSTstatement}

\begin{definition}
Let $\psi$ be a circular pseudo-Anosov flow on a 3-manifold $N$. We say an oriented surface $S$ embedded in $N$ is \textbf{almost transverse to $\psi$} if there is a dynamic blowup $\psi^\sharp$ of $\psi$ such that $S$ is transverse to $\psi^\sharp$ and the orientation of $TS\oplus T\psi^\sharp$  agrees with that of the tangent bundle of $N$ at every point in $N$.
\end{definition}

\begin{TST}[Mosher]
Let $N$ be a closed hyperbolic 3-manifold with fibered face $F$ and associated suspension flow $\psi$. An integral class $\alpha\in H_2(N)$ lies in $\cone(F)$ if and only if it is represented by a surface which is almost transverse to $\psi$.
\end{TST}

Let $S$ be a surface which is almost transverse to $\psi$, and thus transverse to some dynamic blowup $\psi^\sharp$ of $\psi$. Then $S$ is taut, and $[S]\in\cone(F)$. This is true even when $N$ has boundary, as we will now show.

Note that since $\psi$ is a topologically transitive flow, meaning it has a dense orbit, $\psi^\sharp$ is as well. Consider a point $s\in S'$ for some component $S'$ of $S$. There is an open neighborhood $\epsilon(s)$ of $s$ which is homeomorphic to $(0,1)^3$ such that the restricted flow lines of $\psi^\sharp$ correspond to the vertical lines $\{a\}\times\{b\}\times(-1,1)$ for $-1<a,b<1$, and $S'\cap\epsilon(s)$ corresponds to $(-1,1)^2\times \{0\}$. Let $o$ be a dense orbit of $\psi^\sharp$. We can take a segment of $o$ with endpoints near each other in $\epsilon(s)$ and attach endpoints with a short path to obtain a closed curve $\gamma(o)$ positively intersecting $S'$, so $S'$ is homologically nontrivial. In particular it follows that $S$ has no sphere or disk components.

We record a lemma:

\begin{lemma}
Let $\phi^\sharp$ be a dynamic blowup of $\phi$. Then the tangent vector fields of $\phi$ and $\phi^\sharp$ are homotopic, and consequently $e_\phi=e_{\phi^\sharp}$.
\end{lemma}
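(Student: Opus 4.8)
The plan is to show directly that the line fields $T\phi$ and $T\phi^\sharp$ are homotopic as nonvanishing sections of $TM$ (equivalently, as sections of the unit tangent bundle after fixing a metric); the equality of relative Euler classes then follows formally, since $\xi_\phi$ and $\xi_{\phi^\sharp}$ are the quotient bundles $TM/T\phi$ and $TM/T\phi^\sharp$, and a homotopy of the line fields carries one quotient bundle isomorphism to the other, hence carries the chosen outward section on $\partial M$ along with it and preserves the first obstruction class in $H^2(M,\partial M)$. So the real content is the homotopy of vector fields.

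First I would recall the structure of the semiconjugacy from the previous subsection: $\phi^\sharp$ is obtained from $\phi$ by replacing each singular orbit $\gamma$ in a collection with an invariant complex of annuli $A$, and there is a map $M\to M$ collapsing $A$ to $\gamma$ which is injective off $A$ and which semiconjugates $\phi^\sharp$ to $\phi$. In particular $\phi$ and $\phi^\sharp$ literally agree on the complement of an arbitrarily small neighborhood $W$ of the union of the blown-up orbits; the modification is entirely local. So I would localize: it suffices to produce, inside a solid-torus neighborhood $W$ of a single blown-up orbit, a homotopy rel $\partial W$ between $T\phi|_W$ and $T\phi^\sharp|_W$ through nonvanishing vector fields. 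Because the blowup ``pulls apart flow prongs, otherwise leaving the dynamics of the flow unchanged'' (as stated in the excerpt, cf.\ Figure \ref{3dblowup}), both vector fields on $W$ are, up to the local model, small perturbations of the same nonsingular ``vertical'' direction transverse to cross-sectional disks: the collapsing map $\pi\colon T^\sharp\to T$ of trees suspends to a map of solid tori homotopic to the identity and carrying one flow direction to the other. Concretely, on $W\cong D^2\times S^1$ both $T\phi$ and $T\phi^\sharp$ are homotopic to the $S^1$-direction field $\partial_\theta$ (they are nowhere zero and the obstruction to straightening a nonvanishing vector field on a solid torus to a constant one lives in $H^1(D^2\times S^1;\pi_1(S^2))=0$ together with $H^2=0$ for the relevant obstruction groups, since $\pi_i(S^2)$ only obstruct in degrees $2,3$ and $D^2\times S^1$ is homotopy-equivalent to a circle). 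Thus both agree with $\partial_\theta$ up to homotopy, hence with each other, and one checks the homotopy can be taken rel $\partial W$ by an Alexander-type isotopy since near $\partial W$ the two fields already coincide.

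The one point requiring care — and the step I expect to be the main obstacle — is the ``rel $\partial W$'' and orientation bookkeeping: I must make sure the homotopy in the interior does not alter the field near $\partial W$, and that throughout the homotopy the plane field $\xi_t$ complementary to the moving line field stays transverse to $\partial M$ so that the fixed outward section of $\xi|_{\partial M}$ used to define the relative class makes sense at every stage. Since the entire modification happens in $W\subset \intr(M)$, disjoint from $\partial M$, the boundary data is literally untouched, so this is really just a matter of phrasing: choose $W$ small enough that $W\cap\partial M=\varnothing$ and $W$ disjoint from the regular neighborhood $V$ of $\partial M$, run the local homotopy supported in $W$, and observe it is automatically rel $\partial M$ and compatible with the outward section. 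Finally, assembling the local homotopies over the finitely many blown-up orbits (which have disjoint neighborhoods) gives a global homotopy $T\phi\simeq T\phi^\sharp$, and therefore $e_\phi = e_{\phi^\sharp}$, as claimed.
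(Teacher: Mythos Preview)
The paper actually states this lemma without proof (the phrase ``We record a lemma'' signals that it is being noted for later use but treated as evident). So there is no argument in the paper to compare yours against; you have supplied what the author left implicit.

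Your strategy is sound: localize to a solid torus $W$ around each blown-up orbit, observe the two vector fields agree outside $W$, and show they are homotopic inside $W$ rel $\partial W$. The point I would flag is the rel~$\partial W$ step. Your obstruction computation ($H^1(W;\pi_1(S^2))=0$, $H^2(W;\Z)=0$) handles only the \emph{absolute} homotopy problem on $W$. For the \emph{relative} problem the obstructions live in $H^i(W,\partial W;\pi_i(S^2))$, and $H^2(W,\partial W;\Z)\cong H_1(W;\Z)\cong\Z$ is nonzero, so abstract obstruction theory does not close the argument by itself. The appeal to an ``Alexander-type isotopy'' is too vague here: the Alexander trick uses the cone structure of a disk, and $D^2\times S^1$ rel its torus boundary does not admit the same coning.

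The fix is immediate once you use one more piece of the geometry you already described: in the local model $W\cong D^2\times S^1$, both $\phi$ and $\phi^\sharp$ are \emph{suspension} flows (of the first-return map to a meridional disk, respectively of its blown-up version with the invariant tree $G$ inserted), so both $T\phi$ and $T\phi^\sharp$ are positively transverse to the meridional disks---equivalently, have strictly positive $\partial_\theta$-component. Then the straight-line interpolation $(1-t)\,T\phi + t\,T\phi^\sharp$ is nowhere zero for all $t$ and equals the common value on $\partial W$ throughout, giving the desired rel~$\partial W$ homotopy directly. (One can also run this argument globally, using any fixed fiber $Y$ as a common cross section for $\phi$ and $\phi^\sharp$.)
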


Next, note that the restriction of $\xi_{\psi^\sharp}$ to $S$ is homotopic to $TS$, so $\langle e_{\psi^\sharp},[S]\rangle=\chi([S])$. Here $\langle \,\cdot\,,\,\cdot \,\rangle$ denotes the pairing of $H^2$ with $H_2$. Hence
\begin{align*}
x([S])&\le -\chi(S)\\
&=\langle-e_{\psi^\sharp}, [S]\rangle\\
&=\langle -e_{\psi},[S]\rangle \\
&\le x([S]).
\end{align*}

The first inequality holds because $S$ has no sphere or disk components. The final inequality follows from the fact that $x$ is a supremum of finitely many linear functionals on $H_2(N)$ which include $\langle -e_\phi,\, \cdot \,\rangle$.
The fact that $x([S])= -\chi(S)$ means that $S$ is taut, while $\langle  -e_\phi, [S]\rangle$ means that $x$ and $\langle -e_\phi,\, \cdot \,\rangle$ agree on $[S]$ so $[S]\in \cone(\sigma)$.

In light of the above discussion, to prove the Transverse Surface Theorem it suffices to produce an almost transverse representative of any integral class in $\cone(F)$, and since any such class in the interior of $\cone(F)$ is represented by a cross section, it suffices to produce an almost transverse representative for any integral class in $\partial\cone(F)$.

Mosher's proof of the Transverse Surface Theorem spans \cite{Mos89, Mos90, Mos91}; we give a brief summary here. 
Given an integral class $\alpha\in H_2(N)$ lying in $\partial\cone(\sigma)$, we consider its Poincar\'e dual $u\in H^1(N)$. Associated to $u$ is an infinite cyclic covering space $N_\Z$. Mosher shows that there is a way to dynamically blow up a collection of singular orbits of $\psi$ to get a dynamic blowup $\psi^\sharp$ that lifts to a flow $\tilde{\psi}^\sharp$ on $M_\Z$ with nice dynamics. More specifically, he defines a natural partial order $\le$ on the set of chain components of $\tilde{\psi}^\sharp$ and shows that $\tilde{\psi}^\sharp$ has finitely many chain components up to the deck action of $\Z$. He constructs a strongly connected directed graph $\Gamma$ with vertices the deck orbits of chain components of $\tilde{\psi}^\sharp$, and edges determined by $\le$. He shows that flow isotopy classes of surfaces transverse to $\psi^\sharp$ and compatible with $u$ are in bijection with positive cocycles on $\Gamma$ representing a cohomology class $v\in H^1(\Gamma)$ which is determined by $u$. Finally he proves the existence of such a cocycle.

\subsection{A veering proof of the Transverse Surface Theorem}

The proof of the Transverse Surface Theorem which we present in this section depends on the combinatorial Lemma \ref{blowupfill}, the statement of which requires some definitions.

A \textbf{pseudo-Anosov tree} $T$ is a directed tree such that the closed neighborhood of each non-leaf vertex is a pseudo-Anosov star. We can embed $T$ in a disk $\Delta$ such that its leaves lie in $\partial \Delta$. 

Let $d$ be the closure of a component of $\Delta\setminus T$. Then $d$ is homeomorphic to a closed disk and $\partial d$ is a union of  edges in $T$ and one closed interval $I$ in $\partial\Delta$. The boundary of $I$ is composed of two leaves of $T$. Each leaf can be assigned a $+$ or $-$ depending on whether the corresponding edge of $T$ points into or away from the leaf, respectively. We endow $I$ with the orientation pointing from $-$ to $+$.

\begin{lemma}\label{regionalblowup}
Let $A,B$ be complementary regions of $T$ which are incident along a single vertex of $T$, and the orientations on $\closure(A)\cap\partial\Delta$ and $\closure(B)\cap\partial\Delta$ are opposite. Then there is a unique dynamic blowup $T^\sharp$ of $T$ with one more edge than $T$ such that $A$ and $B$ are incident along an edge of $T^\sharp$.
\end{lemma}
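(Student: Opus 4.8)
The plan is to argue purely locally around the single vertex $v$ of $T$ along which $A$ and $B$ are incident, since a dynamic blowup with exactly one more edge than $T$ is forced to be an isomorphism outside the star of $v$. So first I would pass to the pseudo-Anosov star $\St(v)$, which has $2q$ edges alternating in orientation around $v$; adding one edge means replacing $v$ by two vertices $v_1,v_2$ joined by a single new edge $E$, with the $2q$ old half-edges distributed between $\St(v_1)$ and $\St(v_2)$ so that both are again pseudo-Anosov stars and the collapsing map $\pi\colon T^\sharp\to T$ is orientation-preserving and injective off $\pi^{-1}(v)$. The combinatorial content is therefore: the cyclic sequence of $2q$ alternating-orientation half-edges around $v$ must be cut into two cyclic subsequences (one for each new vertex) each of which, together with the two copies of $E$, is itself alternating — and I claim the requirement that $A$ and $B$ become incident along $E$ pins down this cut uniquely.

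Next I would translate ``$A$ incident to $v$'' and ``$B$ incident to $v$'' into data about consecutive half-edges: each complementary region meeting $v$ occupies a corner of $\St(v)$, i.e. a pair of cyclically consecutive half-edges $(a_A,b_A)$ and $(a_B,b_B)$. For a blowup separating $v$ into $v_1,v_2$ with new edge $E$, the region between the last half-edge assigned to $v_1$ and $E$, and between $E$ and the first half-edge assigned to $v_2$, are exactly the two regions that will contain $E$ on their boundary. Demanding that these be $A$ and $B$ forces: all half-edges strictly ``between'' corner $A$ and corner $B$ on one side go to $v_1$, all those on the other side go to $v_2$, and $E$ is inserted in the two gaps occupied by $A$ and $B$. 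This determines the partition completely, giving uniqueness; what remains is existence, i.e. checking that both resulting stars are genuine pseudo-Anosov stars. Here is where the hypotheses get used: the parity/alternation works out precisely because the orientations induced on $\closure(A)\cap\partial\Delta$ and $\closure(B)\cap\partial\Delta$ are \emph{opposite}. Concretely, the two half-edges of $E$ must receive opposite orientations (one into $v_1$, one into $v_2$) for $\pi$ to preserve orientations and for each star to alternate; the sign on the $E$-side of corner $A$ versus corner $B$ is governed by the $\pm$ labels on the leaves bounding those corners, which is exactly the orientation of $\closure(A)\cap\partial\Delta$ and $\closure(B)\cap\partial\Delta$. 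When these orientations are opposite, the counts of inward and outward half-edges on each side of the cut are consistent with an alternating star on each piece; when they agree, one of the two pieces fails to alternate, so the hypothesis is also necessary.

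The main obstacle I anticipate is purely bookkeeping: setting up notation for the cyclic word of half-edges and corners around $v$ that is clean enough to make both the ``the partition is forced'' (uniqueness) and ``the partition works'' (existence, via the alternation/parity check) arguments transparent, and in particular nailing down the sign computation that shows opposite boundary orientations is exactly the condition making the two new stars pseudo-Anosov. I would handle this by fixing a small local picture in $\Delta$ — $v$ at the origin, the $2q$ half-edges as rays, $A$ and $B$ as two of the $2q$ corner-sectors — and doing the insertion of $E$ explicitly in that picture, then observing that every choice is determined and every required property is visibly satisfied. No global or dynamical input is needed; Lemma~\ref{regionalblowup} is entirely a statement about the combinatorics of a single pseudo-Anosov star.
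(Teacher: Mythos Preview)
Your proposal is correct and is, in fact, considerably more detailed than the paper's own treatment: the paper offers no written argument at all, instead saying ``A picture makes this obvious; in lieu of a proof, see Figure~\ref{blowupbyregions}.'' Your combinatorial unpacking---localize to the star of $v$, observe that the one new edge $E$ must sit in the corners of $A$ and $B$ (forcing the partition of half-edges and hence uniqueness), then check that the opposite-orientation hypothesis is precisely the parity condition making both new stars alternate (existence)---is exactly what that picture encodes.

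One small remark: your reduction ``a dynamic blowup with exactly one more edge than $T$ is forced to be an isomorphism outside the star of $v$'' is not quite automatic from the definition, since a priori the extra edge could be inserted at some other vertex of $T$; what pins it to $v$ is the requirement that $A$ and $B$ become incident along it. You implicitly use this, but it is worth saying explicitly. Also, if you carry the argument out for a general pseudo-Anosov tree rather than a star, the link between the orientation of $\closure(A)\cap\partial\Delta$ (determined by signs at the two leaves) and the local in/out pattern of the two edges at $v$ bounding $A$'s corner needs one line of justification: walking along $\partial A$ from the $-$ leaf, the alternation at each vertex forces every edge to point ``forward,'' so the corner data at $v$ is determined by the leaf signs. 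For a star this is immediate, and the paper only ever applies the lemma in that case.
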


A picture makes this obvious; in lieu of a proof, see Figure \ref{blowupbyregions} for a diagram of this dynamic blowup (for simplicity, we are abusing notation slightly by identifying $A$ and $B$ with the corresponding complementary regions of $T^\sharp$).

\begin{figure}[h]
\centering
\includegraphics[height=1.5in]{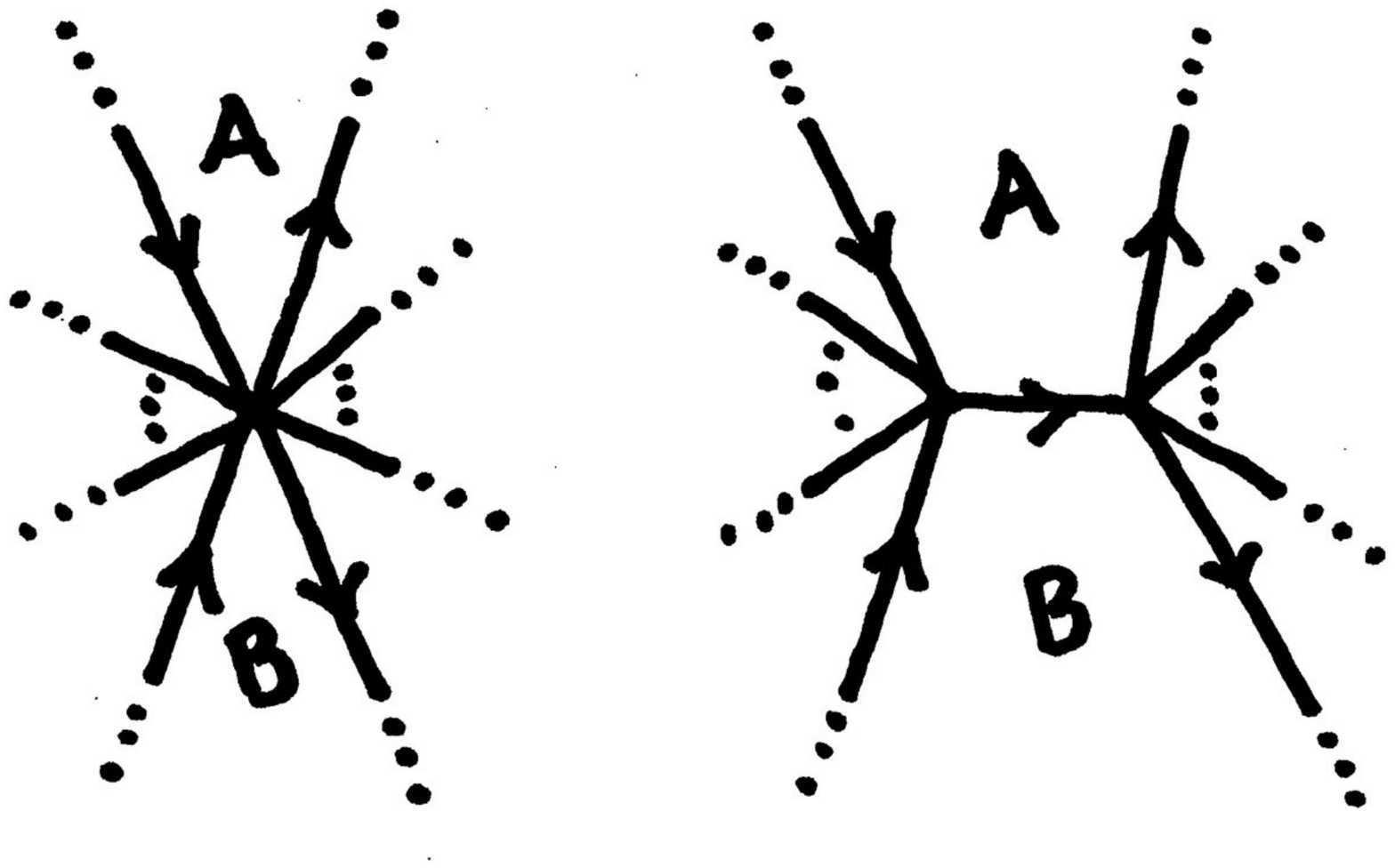}
\caption{}
\label{blowupbyregions}
\end{figure}

If we choose an orientation for $\partial\Delta$, then any point $p$ lying in a component $I$ of $\partial\Delta\setminus T$ can be given a sign according to whether the orientation of $\Delta$ agrees with the orientation of the component of $\partial\Delta\setminus T$ containing $p$.

An \textbf{even family} $E$ for a pseudo-Anosov tree $T$ is a finite subset of $\partial \Delta \setminus T$ which represents 0 in $H_0(\partial\Delta)$ when each $p\in E$ is given a sign as in the previous paragraph and $E$ is viewed as a 0-chain. We say an even family can be \textbf{filled in over $T$} if there exists a family $L$ of disjoint cooriented line segments with $L\cap \partial\Delta=\partial L$ such that:

\begin{itemize}
\item $\partial L=E$ in the cooriented sense, i.e. the coorientations at each point in $\partial L$ agree with the orientation of each segment of $\partial\Delta\setminus T$, and 

\item for each segment $\ell$ of $L$, $\ell$ intersects $T$ only transversely in the interior of edges such that the coorientation of $\ell$ agrees with the orientation of the intersected edge.
\end{itemize}

\begin{figure}[h]
\centering
\includegraphics[height=2in]{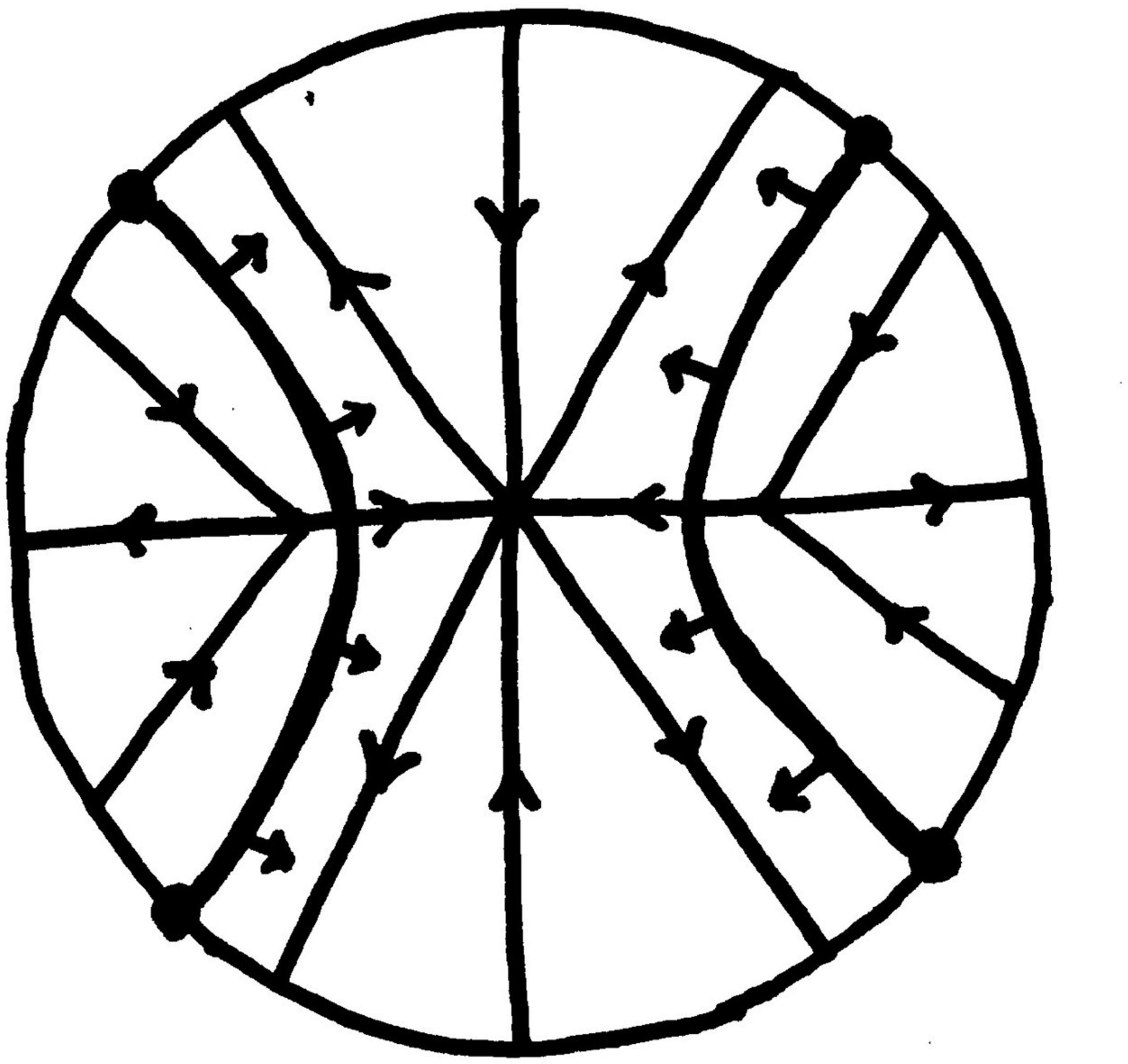}
\caption{In this picture we see a pseudo-Anosov tree $T$ and an even family of size 4 being $\pi$-symmetrically filled in over $T$ by two cooriented line segments.}
\label{fillingin}
\end{figure}

If $T$ and $E$ above are symmetric under rotation of $\Delta$ by an angle of $\theta$, and $L$ can be chosen to respect this symmetry, we say $E$ can be \textbf{$\theta$-symmetrically filled in over $T$}.  Figure \ref{fillingin} shows an example.

\begin{lemma}\label{blowupfill}
Let $S$ and $E$ be a pseudo-Anosov star and an even family for $S$ respectively that are symmetric under rotation by $\theta$. There exists a dynamic blowup $S^\sharp$ of $S$ such that $E$ can be $\theta$-symmetrically filled in over $S^\sharp$.
\end{lemma}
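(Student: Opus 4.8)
The plan is to reduce the statement to a purely combinatorial picture in the disk and build the desired blowup one edge at a time, using Lemma \ref{regionalblowup} as the atomic step. Start with the pseudo-Anosov star $S$, which has $2q$ edges and $2q$ complementary regions in $\Delta$; label the complementary regions cyclically $A_1,\dots,A_{2q}$. The orientations of the boundary arcs $\closure(A_j)\cap\partial\Delta$ alternate in sign as $j$ increases, because the edge orientations around the central vertex alternate inward/outward, and a region's arc is oriented from its $-$ leaf to its $+$ leaf. The even family $E$ assigns signs to finitely many points of $\partial\Delta\setminus S$; the hypothesis that $E$ is even means the signed count is zero, and by the $\theta$-symmetry we may organize $E$ into $\theta$-orbits, each of which is itself an even family (since $\theta$ is a rotation it preserves the orientation of $\Delta$ and hence all the signs), so it suffices to fill in one orbit-worth at a time and then rotate.

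The core construction is to pair up the points of $E$ by sign. First I would show that after possibly grouping, we can assume $E$ consists of a single $+$ point $p$ lying in region $A_i$ and a single $-$ point $p'$ lying in region $A_{i'}$ with $i$ and $i'$ of opposite parity — this is where evenness is used, together with the observation that each complementary region contributes at most one useful sign to any minimal filling segment. To connect $p$ to $p'$ by a cooriented segment $\ell$ crossing only edges transversally in the compatible direction, I need $A_i$ and $A_{i'}$ to become adjacent along an edge after blowing up. Here is the iterative step: pick a vertex $w$ along the path in $S$ separating $A_i$ from $A_{i'}$, identify the two complementary regions $A, B$ of the current tree incident to $w$ that lie on the two sides "toward" $A_i$ and $A_{i'}$ respectively and whose $\partial\Delta$-arcs carry opposite orientations (such a pair exists precisely because the signs alternate around $w$, which remains a pseudo-Anosov star — or becomes one after the previous blowups), and apply Lemma \ref{regionalblowup} to make them adjacent along a new edge. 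Repeating this along the separating path, each step reducing the combinatorial distance between $A_i$ and $A_{i'}$ by one, eventually produces $S^\sharp$ in which $A_i$ and $A_{i'}$ share an edge $e$; then $\ell$ can be drawn from $p$ across $e$ to $p'$, and one checks the coorientation of $\ell$ agrees with the orientation of $e$ precisely because $p$ was $+$ and $p'$ was $-$. To maintain $\theta$-symmetry, I would perform all $\theta$-translates of each blowup simultaneously (the rotation permutes the regions and carries Lemma \ref{regionalblowup}'s unique blowup to its rotated version, so these do not interfere provided the separating paths for distinct orbit members are disjoint, which holds for a minimal choice of pairing), and draw $\ell$ together with all its rotates.

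The main obstacle I anticipate is the bookkeeping that guarantees the separating paths (and the new edges created) for the different sign-pairs and their $\theta$-rotates can be chosen disjoint, so that the segments in $L$ end up disjoint and each blowup is genuinely a dynamic blowup of the previous tree (the collapsing maps must compose correctly). Concretely, one must argue that a minimal system of non-crossing chords in $\Delta$ realizing the pairing of $E$ by sign exists (a standard parenthesis-matching argument using evenness), that $\theta$-symmetry can be imposed on this matching, and that blowing up along the union of the corresponding separating paths yields a well-defined pseudo-Anosov tree $S^\sharp$ at each stage — i.e., every vertex neighborhood stays a pseudo-Anosov star. Once disjointness and the non-crossing property are in hand, the coorientation compatibility and the homological condition $\partial L = E$ are immediate from the sign conventions, and the $\theta$-symmetry of the output is automatic from the symmetric construction.
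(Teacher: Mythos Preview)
There are two concrete errors. First, your claim that each $\theta$-orbit of $E$ is itself an even family is backwards: since rotation preserves the orientation of $\partial\Delta$ (as you yourself note), every point in a $\theta$-orbit carries the \emph{same} sign, so a single orbit of a point is never even. Second, and more seriously, the ``iterative step along the separating path'' cannot work as you describe it. Dynamic blowups only \emph{add} edges; once two regions $A_i,A_{i'}$ fail to share a vertex of the current tree, no sequence of blowups will ever make them share an edge --- each application of Lemma~\ref{regionalblowup} replaces a vertex by an edge and redistributes the incident regions between the two new endpoints, which can only push non-incident regions farther apart. So your claimed distance-reducing iteration is illusory. In the original star this problem does not arise (all regions meet at the unique central vertex, so one blowup per pair suffices and your ``path'' has length zero), but your scheme of handling pairs sequentially lands you, after the first blowup, in a non-star tree where the next pair's regions may well be separated by a genuine path, and then you are stuck.

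The paper sidesteps this entirely by a short induction on $|E|$. One chooses a pair $p_1,q_1\in E$ of opposite sign that are \emph{circularly adjacent} --- no other point of $E$ lies between them on $\partial\Delta$ --- and takes its full $\theta$-orbit $\{(p_i,q_i)\}$. Circular adjacency forces these pairs to be pairwise unlinked in $\partial\Delta$, so the corresponding blowups (each at the single central vertex of $S$) can be performed simultaneously, and the filling segments cut off small bigons containing no other points of $E$. What remains is a strictly smaller $\theta$-symmetric even family $E'$ sitting in a single sub-disk $\Delta'$ on which $S^\sharp$ restricts to a \emph{star} again; now recurse. Your parenthesis-matching intuition is the right picture, but the innermost-pair (circularly adjacent) choice is precisely what makes the induction close without ever confronting a non-star tree --- that is the missing idea in your proposal.
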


Note that the lemma statement includes the case $\theta=0$.

\begin{figure}[h]
\centering
\includegraphics[height=1.9in]{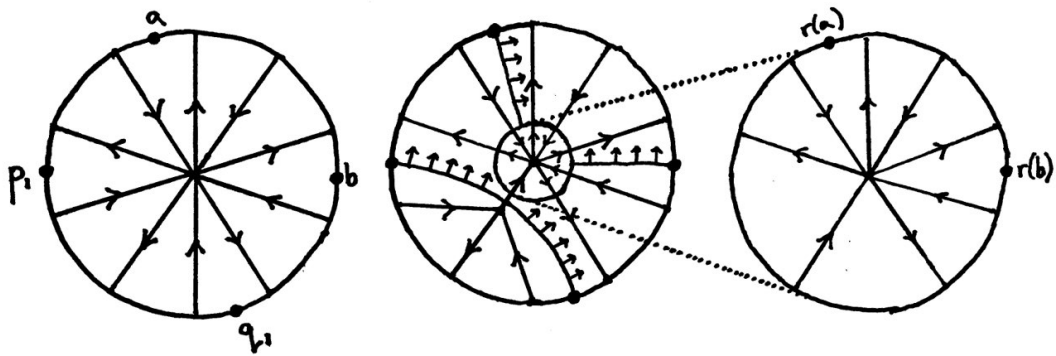}
\caption{A diagram of the proof of Lemma \ref{blowupfill} when $S$ is the pseudo-Anosov star on the left with an even family of size 4, and $\theta=0$. In the notation of the proof, $E=\{p_1,q_1,a,b\}$ and $E_1=\{r(a),r(b)\}$.}
\label{blowupfillfigure}
\end{figure}

\begin{proof}
Choose a pair of points $p_1,q_1$ in $E$ of opposite sign which are circularly adjacent and let $p_1,\dots,p_n$ and $q_1,\dots,q_n$ be all their images, without repeats, under rotation of $\Delta$ by $\theta$. Let $P_i$, $Q_i$ be the components of $\Delta\setminus S$ corresponding to $p_i, q_i$ respectively. If $P_i$ and $Q_i$ are incident along an edge of $S$ then there is a family of cooriented line segments filling in $\{p_i,q_i\}_{i=1}^n$ over $S$. Otherwise  $P_i$ and $Q_i$ are incident at the vertex of $S$ and determine a dynamic blowup of $S$ as in Lemma \ref{regionalblowup}. Since the pairs $(p_i, q_i)$ are unlinked in $\partial\Delta$, we may perform all $n$ of these dynamic blowups in concert to obtain a dynamic blowup $S^\sharp$ of $S$ such that $\{p_i,q_i\}_{i=1}^n$ can be $\theta$-symmetrically filled in over $S^\sharp$.

Let $L$ be the family of cooriented line segments filling in $\{p_i,q_i\}_{i=1}^n$. Let $E'=E\setminus \{p_i,q_i\}_{i=1}^n$. By construction, $E'$ is contained in a single component $\Delta'$ of $\Delta\setminus L$, and $S^\sharp\cap \Delta'$ has a single vertex $v$ which is preserved under rotation by $\theta$. There exists a closed disk $\Delta_1\subset \Delta'$ centered around $v$ which is also preserved under rotation by $\theta$. We can connect each point $e$ in $E'$ by a cooriented line segment to its image $r(e)$ under a retraction $r\colon \Delta\to\Delta_1$ such that the union of these segments is invariant under rotation by $\theta$. Let $E_1=\{d(e)\mid e\in E'\}$, and $S_1=S^\sharp\cap \Delta_1$.
A picture of this situation is shown in Figure \ref{blowupfillfigure}.

We see that $E_1$ is an even family for $S_1$ which is smaller than $E$. Iterating this procedure, we eventually $\theta$-symmetrically fill in $E$ over a dynamic blowup of $S$.
\end{proof}

Now we are equipped to prove the Transverse Surface Theorem. We actually will prove a generalization to compact manifolds which might have boundary.

\begin{theorem}[Almost transverse surfaces]\label{myTST}
Let $M$ be a compact hyperbolic 3-manifold, with a fibered face $\sigma$ of $B_x(M)$ and associated circular pseudo-Anosov suspension flow $\phi$. Let $\alpha\in H_2(M,\partial M)$ be an integral homology class. Then $\alpha\in \cone(\sigma)$ if and only if $\alpha$ is represented by a surface almost transverse to $\phi$.
\end{theorem}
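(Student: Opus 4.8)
One direction is already established in the excerpt: if $\alpha$ is represented by a surface $S$ almost transverse to $\phi$, then the argument preceding Lemma~2.43 (the relative Euler class computation, which works even when $\partial M\ne\varnothing$) shows $S$ is taut and $[S]\in\cone(\sigma)$. So the task is the converse, and by Fried's theorem (Theorem~\ref{fried's theorem}) and the fact that classes in $\intr(\cone(\sigma))$ are represented by cross sections (which are by definition transverse to $\phi$, hence trivially almost transverse), it suffices to produce an almost transverse representative for an arbitrary integral class $\alpha\in\partial\cone(\sigma)$.

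The plan is as follows. Given such an $\alpha$, first pass to the punctured manifold: $\mr\alpha=P(\alpha)\in\cone(\mr\sigma)$, and by Corollary~\ref{VTcarriesface} there is a surface $\mr S$ properly embedded in $\mr M$ carried by the branched surface $B_{\mr\tau}$ with $[\mr S]=\mr\alpha$. Since $\mr S$ is carried by $B_{\mr\tau}\subset N(B_{\mr\tau})$, away from the neighborhoods $U_i$ of the singular orbits it already lies in a regular neighborhood of the 2-skeleton of $\tau$, which we know (via Gu\'eritaud's construction) sits transversely to $\phi'$; thus $\mr S$ can be taken transverse to $\phi$ on $\mr M$. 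The issue is entirely local near each singular orbit $c_i$: we must understand how $\partial\mr S\cap\partial U_i$ sits on the torus $\partial U_i$ and then fill in a surface inside $U_i$ transverse to an appropriate dynamic blowup $\phi^\sharp$ of $\phi$ at $c_i$. By Lemma~\ref{ladderseparatrixstructure}, the stable flow prongs of $c_i$ meet $\partial U_i$ in the interiors of upward ladders and the unstable ones in downward ladders, and a cross-section of the flow prongs at $c_i$ is a pseudo-Anosov star $S_i$ with the rotational symmetry $\theta=2\pi p/q$ coming from how $\phi$ permutes the prongs. The curve $\partial\mr S\cap\partial U_i$, being positively carried by $\partial\mr\tau$ and bounding, determines (after intersecting with a meridian disk's worth of data, i.e. recording on which side of each prong it passes) an even family $E_i$ for $S_i$ invariant under rotation by $\theta$; the ``even'' condition is exactly the homological triviality forced by $\partial\mr S$ bounding $\mr S$.

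Now apply Lemma~\ref{blowupfill}: there is a dynamic blowup $S_i^\sharp$ of $S_i$ over which $E_i$ can be $\theta$-symmetrically filled in by a family $L_i$ of cooriented line segments meeting the edges of $S_i^\sharp$ coherently. Suspending this picture: the blowup $S_i^\sharp$ is precisely the combinatorial datum specifying a dynamic blowup $\phi^\sharp$ of $\phi$ at $c_i$ (replacing $c_i$ by the suspension of a homeomorphism of the tree, with the blown-up annulus complex $A_i$), and the $\theta$-symmetric filling $L_i$ suspends to a surface-with-boundary inside $U_i$, transverse to $\phi^\sharp$, whose boundary on $\partial U_i$ matches $\partial\mr S$ (the coorientation conditions on $L_i$ translate exactly to positive transversality to $\phi^\sharp$, and $\theta$-invariance is what lets the suspension close up after going once around $c_i$). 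Doing this at every $c_i$ and gluing the filled-in pieces to $\mr S$ produces a closed (rel $\partial M$) surface $S$ transverse to $\phi^\sharp$ with $[S]=\alpha$ in $H_2(M,\partial M)$ — here we use that collapsing $A=\bigcup A_i$ recovers $\phi$ and that the puncturing map $P$ is injective, so the homology class is correct. Finally, one checks the orientation condition in the definition of almost transversality, i.e. that $TS\oplus T\phi^\sharp$ is positively oriented; this follows from the coorientation bookkeeping on $B_{\mr\tau}$ and on $L_i$ being consistent throughout.

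I expect the main obstacle to be the careful bridge between the \emph{local combinatorial} picture of Lemma~\ref{blowupfill} and the \emph{global flow-dynamical} picture: precisely defining the even family $E_i$ from $\partial\mr S\cap\partial U_i$ (one must choose a meridian-like transversal and argue the resulting 0-chain is independent of choices up to what matters, and verify its $\theta$-symmetry and evenness), and then verifying that the suspension of the filling $L_i$ genuinely glues to $\mr S$ along $\partial U_i$ to give an embedded (or at least immersed-then-resolved) surface transverse to the single global flow $\phi^\sharp$. Keeping track of all coorientations — so that "positively carried by $\partial\mr\tau$" on the boundary, "carried by $B_{\mr\tau}$" in $\mr M$, and "positively transverse to $\phi^\sharp$" in $U_i$ are mutually compatible — is where the real work lies; the rest is assembling results already in hand.
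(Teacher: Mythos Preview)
Your plan is correct and matches the paper's proof essentially step for step: reduce to $\alpha\in\partial\cone(\sigma)$, represent $\mr\alpha$ by a surface $\mr S$ carried by $B_{\mr\tau}$ via Corollary~\ref{VTcarriesface}, then at each singular orbit $c_i$ use Lemma~\ref{ladderseparatrixstructure} to see that $\partial\mr S\cap\Delta$ is an even family for the pseudo-Anosov star, apply Lemma~\ref{blowupfill}, and suspend the filling $L_i$ to annuli transverse to the blown-up flow, finishing with injectivity of $P$. The only substantive items you have not named are (i) the trichotomy from \cite{Lan18} that $\partial\mr S\cap\partial U_i$ is either empty, a collection of meridians (capped by disks, no blowup needed), or a nulhomologous collection of ladderpole curves---this is exactly what supplies the ``evenness'' you flag as an obstacle---and (ii) the separate, easy step of extending $\mr S$ across the collar $V$ of $\partial M$ by annuli transverse to $\phi$.
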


\begin{figure}[h]
\centering
\includegraphics[height=2.5in]{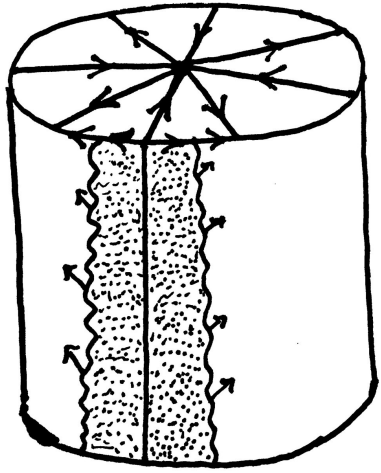}
\caption{A portion of $U_i$ is shown, illustrating implications of Lemma \ref{ladderseparatrixstructure}. The top disk is $\Delta$. The shaded region denotes an upward ladder. Its boundary ladderpoles inherit a coorientation from $B_{\mr\tau}$ agreeing with the orientations of the intervals of $\partial\Delta\setminus S$. The vertical line inside the ladder is the intersection of a stable flow prong with $\partial U_i$. }
\label{TSTproofevenfamily}
\end{figure}

 \begin{figure}[h]
\centering
\includegraphics[width=1.5in]{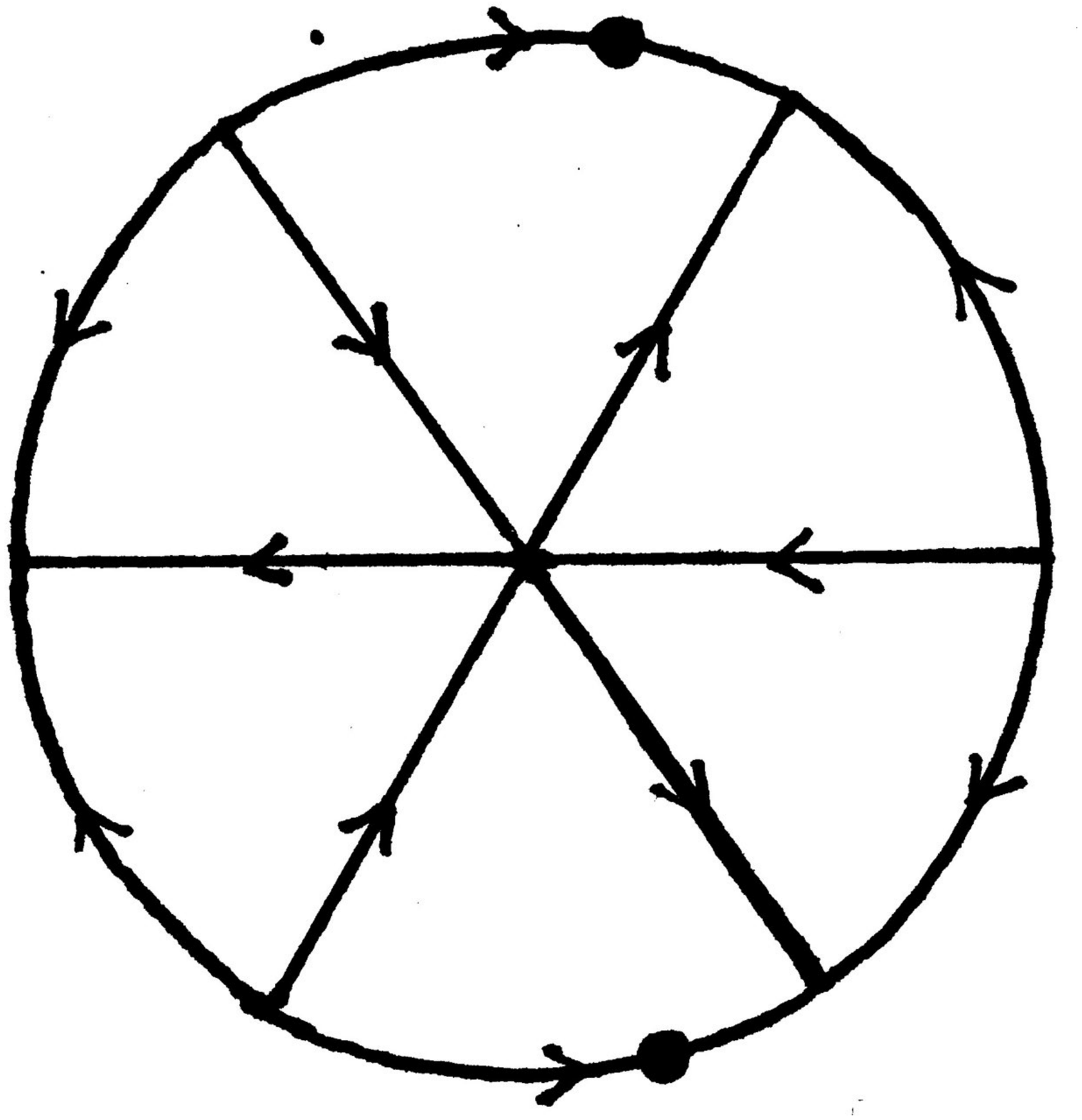}
\includegraphics[width=1.325in]{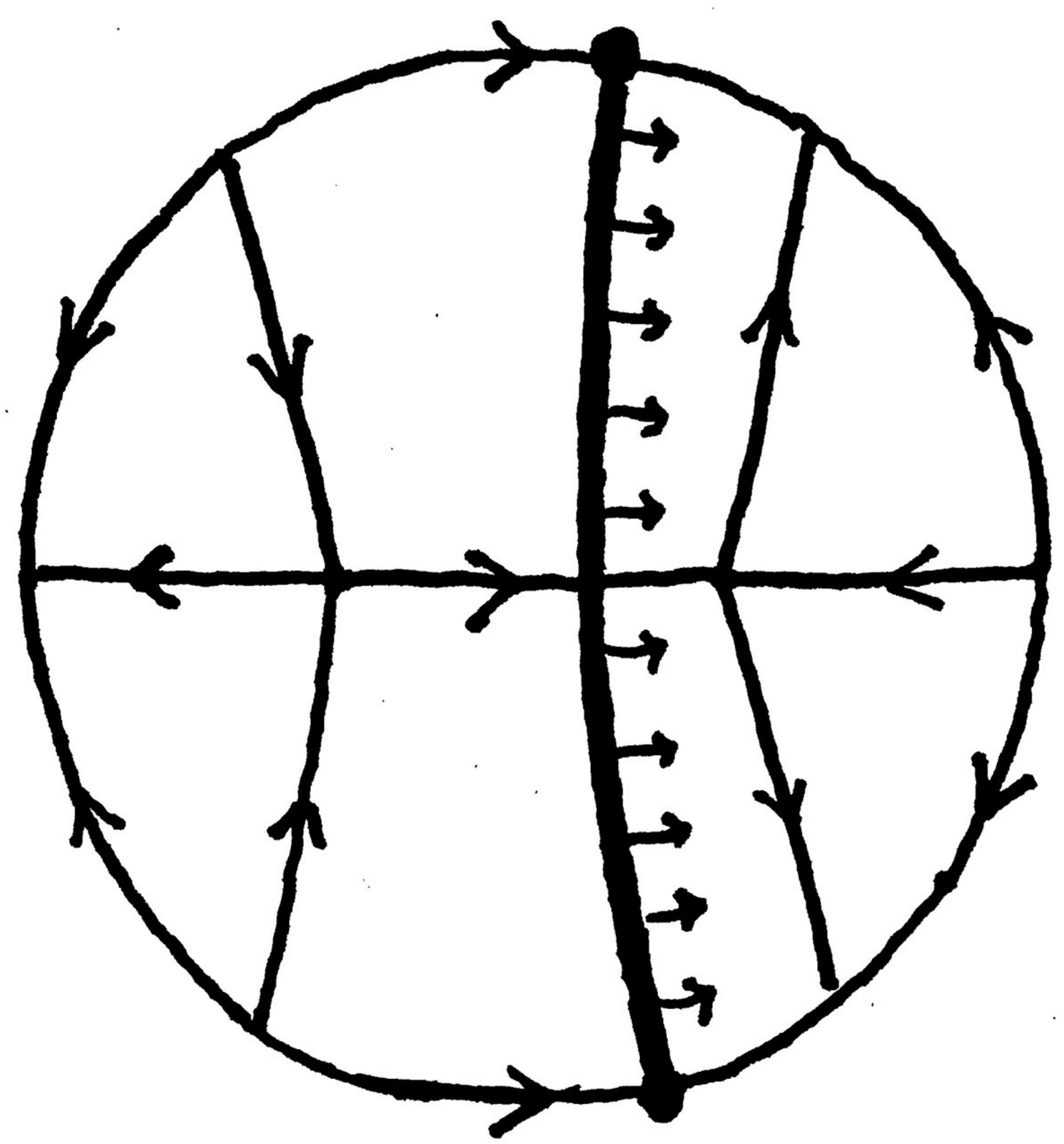}
\includegraphics[width=1.5in]{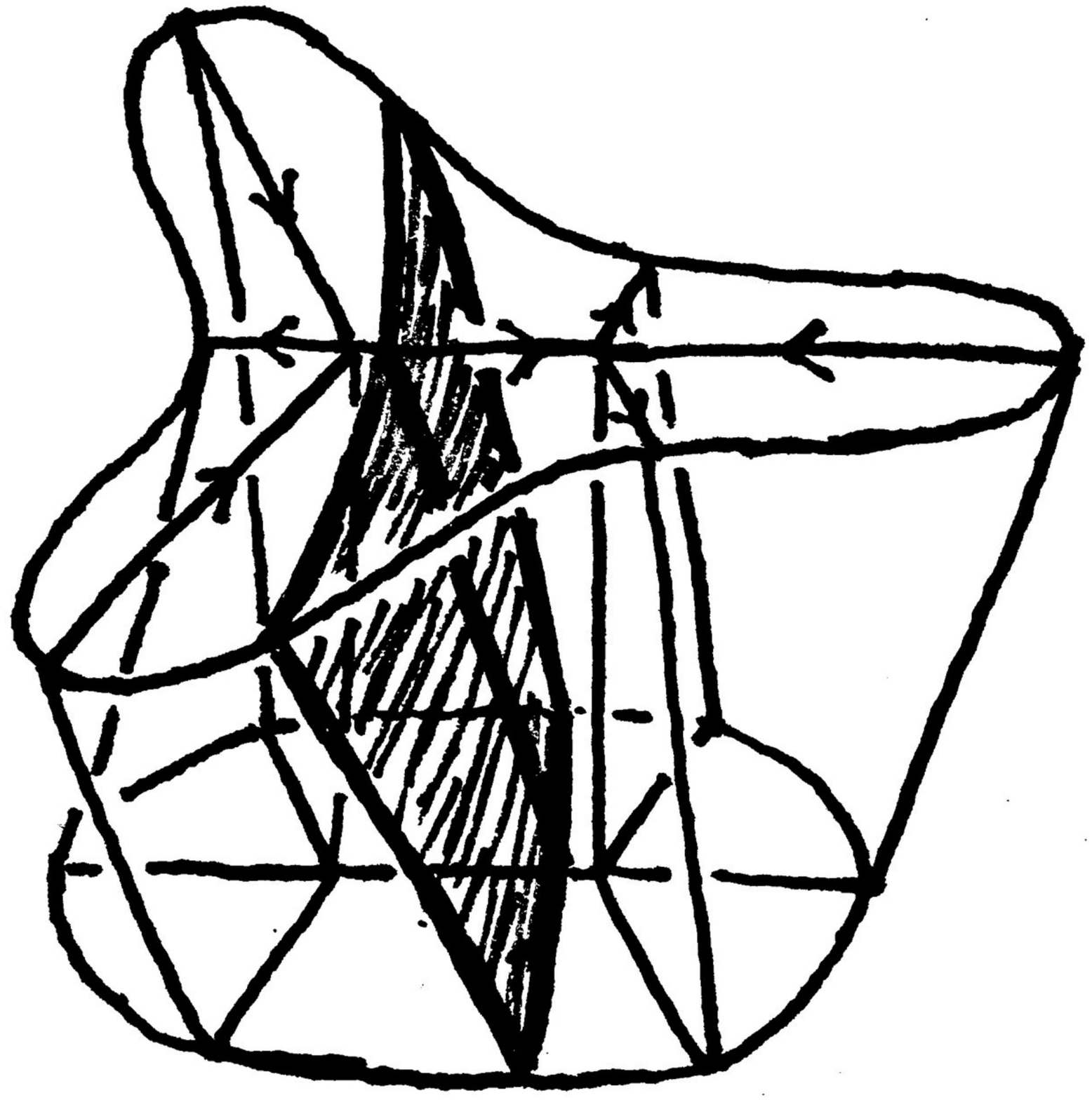}
\caption{In this example $c_i$ is a 3-pronged singular orbit, $\theta=0$, and $\mr A$ intersects $\partial U_i$ in two curves which gives an even family $E$ of size 2 (left). We dynamically blow up $S$ to $S^\sharp$ and fill in $E$ over $S^\sharp$ by $L$, which in this case is a single cooriented line segment (center). We can model $\phi^\sharp|_{U_i}$ as the vertical flow restricted to the distorted cyclinder shown on the right, with the top and bottom identified by a homeomorphism. Then we see $L$ gives rise to an annulus which is transverse to $\phi^\sharp$.}
\label{evenfamilytoannulus}
\end{figure}

\begin{proof}
By the discussion following the statement of the Transverse Surface Theorem in Section \ref{TSTstatement}, the homology class of any surface almost transverse to $\phi$ lies in $\cone(\sigma)$. Hence we need only produce a representative of $\alpha$ transverse to some dynamic blowup of $\phi$. Since any integral class in $\intr(\cone(\sigma))$ is represented by a cross section, we assume $\alpha\in\partial\cone(\sigma)$.

Our strategy will be to take a nice representative of $\mr\alpha$ which is transverse to $\phi$ and complete it over $U$ by gluing in disks and annuli which are also transverse to $\phi$. Where necessary we dynamically blow up some singular orbits of $\phi$ and glue in annuli which are transverse to the blown up flow.

By Corollary \ref{VTcarriesface}, $\mr\alpha$ has some representative $\mr A$ which is carried by $B_{\mr\tau}$, so we can assume $\mr A$ lies in $N(B_{\mr\tau})$ transverse to the normal foliation. Since $B_{\mr\tau}$ is transverse to $\phi$, so is $\mr A$. 

First, to each boundary component of $\mr A$ lying in $\partial V$ (see Section \ref{sec:notation} for notation), we glue an annulus which extends that component of $\partial\mr A$ to $\partial M$ maintaining transversality to $\phi$.

Next, let $c_i$ be a singular orbit of $\phi$ whose flow prongs $\phi$ rotates by $\theta$.

By \cite{Lan18}, $\mr A\cap \partial U_i$ is either

\begin{enumerate}[label=(\alph*)]
\item empty,
\item a collection of meridians of $\closure(U_i)$, or
\item a collection of ladderpole curves which is nulhomologous in $H_1(\partial(U_i))$.
\end{enumerate}

In case (a) we have nothing to do. In case (b) the curves can be capped off by meridional disks of $\closure(U_i)$ transverse to $\phi$. 

In case (c), we consider a meridional disk $\Delta$ of $\closure(U_i)$ which is transverse to $\phi$. The intersection of the flow prongs of $c_i$ with $\Delta$ gives a pseudo-Anosov star $S$. Further, we claim $\Delta\cap\mr A$ is an even family for $S$.

By Lemma \ref{ladderseparatrixstructure}, each interval component of $\partial\Delta\setminus S$ intersects a single ladderpole, and the orientation of the interval agrees with the coorientation the ladderpole inherits from $B_{\mr\tau}$. See Figure \ref{TSTproofevenfamily}.

Because $\partial\mr A\cap \partial U_i$ is a collection of ladderpole curves nulhomologous in $H_1(\partial U_i)$, it consists of equal numbers of left and right ladderpole curves of upward ladders. It follows that $E=\partial\mr A\cap\Delta$ is an even family for $S$.

 By Lemma \ref{blowupfill} there exists a dynamic blowup $S^\sharp$ of $S$ such that $E$ can be $\theta$-symmetrically filled in by a collection of cooriented line segments $L$ over $S^\sharp$.  The tree $S^\sharp$ determines a dynamic blowup $\phi_i^\sharp$ of $\phi$. We can suspend $L$ to a family of annuli with boundary $\partial \mr A$ that are transverse to $\phi^\sharp$ (see Figure \ref{evenfamilytoannulus} and caption).

By gluing these annuli to $\mr A$, we eliminate all boundary components of $\mr A$ meeting $\partial U_i$. The coorientations agree along $\partial\mr A$ by construction.

By repeating this procedure at every singular orbit of $\phi$, we obtain a surface $A$ which is transverse to a dynamic blowup of $\phi$. The image of $[A]$ under the puncturing map $P$ is evidently $\mr\alpha$. Since $P$ is injective, $[A]=\alpha$.
\end{proof}

\section{Homology directions and $\tau$}

Our notation for this section is the same as defined at the beginning of Section \ref{sec:notation}: $\phi$ is a circular pseudo-Anosov flow on a compact 3-manifold $M$, and $\sigma$ is the associated fibered face with veering triangulation $\tau$. By $M'$ we mean $\intr(M)\setminus \{\text{singular orbits of $\phi$}\}$.

\subsection{Convex polyhedral cones}

We recall some facts about convex polyhedral cones (for a reference see e.g. \cite{Ful93} $\S$1.2). Let $A$ be a subset of a finite-dimensional real vector space $V$. Define the \textbf{dual} of $A$ to be
\[
A^\vee=\{u\in V^*\mid u(a)\ge 0 \,\, \forall  a\in A\}.
\]
 A \textbf{convex polyhedral cone} in $V$ is the collection of all linear combinations, with nonnegative coefficients, of finitely many vectors. If $C$ is a convex polyhedral cone in $V$, then $C^\vee$ is a convex polyhedral cone in $V^*$. We have the relation $C^{\vee\vee}=C$.
 
 A \textbf{face} of $C$ is defined to be the intersection of $C$ with the kernel of an element in $C^\vee$. The \textbf{dimension} of a face of a convex polyhedral cone is the dimension of the vector subspace generated by points in the face. A top-dimensional proper face of a convex polyhedral cone is called a \textbf{facet} of the cone. If $F$ is a face of $C$, define 
\[
F^*=\{u\in C^\vee\mid u(v)=0 \,\, \forall  v\in F\}.
\]
Then $F^*$ is a face of $C^\vee$ with $\dimension(F^*)=\dimension(V)-\dimension(F)$, and $F\mapsto F^*$ defines a bijection between the faces of $C$ and the faces of $C^\vee$. We indulge in some foreshadowing by remarking that, in particular, $F\mapsto F^*$ restricts to a bijection between the one-dimensional faces of $C$ and the facets of $C^\vee$.

We can identify $H_1(M)$ with $H^1(M)^*$ via the universal coefficients theorem, so if $C$ is a convex polyhedral cone in $H^1(M)$ we will view $C^\vee$ as living in $H_1(M)$ and vice versa.

\subsection{Flipping}

Recall that $B_{\mr\tau}$ is the the 2-skeleton of $\mr\tau$ viewed as a branched surface.
Let $\tri$ be a truncated taut tetrahedron of $\mr \tau$. If $S$ is carried with positive weights on both of the sectors of $B_{\mr\tau}$ corresponding to the bottom of $\tri$, then $S$ may be isotoped upwards through $\tri$ to a new surface carried by $B_{\mr\tau}$ such that the uppermost (with respect to the orientation of the normal foliation of $N(B_{\mr\tau})$) portion of $S$ which was carried by the bottom of $\tri$ is now carried by the top of $\tri$. Outside of a neighborhood of $\tri$ this isotopy is the identity. We call this isotopy an \textbf{upward flip}. If $S_1$ is the image of $S$ under a single upward flip of $S$, we say $S_1$ is an \textbf{upward flip of $S$}.

\subsection{A train track on $\tau$}\label{sec:traintrack}

Let $T$ be a traintrack embedded in the 2-skeleton of $\tau$ with a single trivalent switch lying in the interior of each ideal triangle such that each edge of $\tau$ intersects $T$ in a single point. Since each edge of $\tau$ has degree at least 4, this point is a switch of $T$ with valence $\ge 4$. Note that $T$ is nonstandard in 2 ways: it is embedded in the 2-skeleton of $\tau$ rather than a surface, and it is not trivalent.

Let $t$ be a triangle of $\tau$, and let $s$ be the switch of $T$ interior to $t$. The interior of $t$ is divided into 3 disks by $T\cap t$, one of which has a cusp at $s$. 
The branch which is disjoint from the cusped region is called a \textbf{large branch} of $T$. A branch which is not large is called a \textbf{small branch} of $T$.

We now define a particular train track in the 2-skeleton of $\tau$ which we call the \textbf{stable train track of $\tau$} and denote by $\St(\tau$).
For each triangle $t$ of $\tau$ we place a trivalent switch in the interior of $t$, and connect the large branch to the unique edge of $t$ which is the bottom $\pi$-edge of the taut tetrahedron which $t$ bounds below. We connect the two small branches to the other two edges of $t$. Gluing so that $\St(\tau)$ intersects each edge of $\tau$ in a point yields our desired train track. 
This is precisely the train track we would get from Agol's construction of $\tau$ by fixing a fibration, building $\tau$ as a layered triangulation on the fiber by looking at dual triangulations to a periodic maximal splitting sequence of the monodromy's stable train track, and recording the switches of stable train tracks on the relevant ideal triangles.
We will also view $\St(\tau)$ as living in $B_{\mr\tau}$.
Figure \ref{visualizingthetraintrack} shows a portion of a veering triangulation with its stable train track.

Any surface $S$ carried by $B_{\mr\tau}$ naturally inherits a trivalent train track from $\St(\tau)$ which we call $\St(S)$. There is a natural cellular map $\St(S)\to \St(\tau)$, which allows us to identify each branch of $\St(S)$ with two branches of $\St(\tau)$. This map need be neither surjective nor injective. A branch of $\St(S)$ composed of 2 large branches of $\St(\tau)$ is called a \textbf{large branch} of $\St(S)$.

 By construction, we have the following.

\begin{observation}
Let $S$ be a surface carried by $B_{\mr\tau}$. There exists an upward flip of $S$ if and only if $\St(S)$ contains a large branch.
\end{observation}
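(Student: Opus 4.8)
The plan is to unwind the definitions of an upward flip and of $\St(S)$, and verify the two implications directly by a local analysis inside a single truncated taut tetrahedron. Recall that an upward flip of $S$ exists precisely when there is some truncated taut tetrahedron $\tri$ of $\mr\tau$ such that $S$ is carried with positive weight on both sectors of $B_{\mr\tau}$ forming the bottom of $\tri$. So I would prove the equivalent statement: $S$ has positive weight on both bottom sectors of some $\tri$ if and only if $\St(S)$ contains a large branch.

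First I would set up the local picture. Fix a truncated taut tetrahedron $\tri$; its bottom two faces are ideal triangles $t_1, t_2$ meeting along the bottom $\pi$-edge $e$ of $\tri$. By the construction of $\St(\tau)$ in Section~\ref{sec:traintrack}, the large branch of the switch interior to $t_j$ is the branch attached to the bottom $\pi$-edge of the taut tetrahedron that $t_j$ bounds below — \emph{not} of $\tri$ — so I must be careful: the relevant fact is instead that $e$, being the \emph{top} $\pi$-edge of some lower tetrahedron and the bottom $\pi$-edge of $\tri$, and the way $\St(S)$ is pulled back from $\St(\tau)$ under the carrying map, means that a branch of $\St(S)$ running through the sectors over $t_1$ and $t_2$ on either side of (a lift of) $e$ is a large branch of $\St(S)$ exactly when both of those sectors carry positive weight and they are glued across $e$ forming the long branch configuration. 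I would make this precise by recalling that a branch of $\St(S)$ is identified with two branches of $\St(\tau)$ via the cellular map $\St(S)\to\St(\tau)$, and that a large branch of $\St(S)$ is by definition one composed of two large branches of $\St(\tau)$; a large branch of $\St(\tau)$ in $t_j$ abuts $e$ on the $\St(\tau)$ side, and the surface $S$ realizes a branch over it precisely when the corresponding sector of $B_{\mr\tau}$ (the bottom sector of $\tri$ lying in $t_j$) is assigned positive weight by $S$.

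For the forward direction: given an upward flip of $S$ through $\tri$, both bottom sectors of $\tri$ carry positive $S$-weight, hence $\St(S)$ has a branch over each of the two large branches of $\St(\tau)$ interior to $t_1$ and $t_2$ that abut the bottom $\pi$-edge $e$ of $\tri$; these two branches of $\St(S)$ are glued across the point $\St(\tau)\cap e$ to form a single branch of $\St(S)$ composed of two large branches of $\St(\tau)$, which is by definition a large branch of $\St(S)$. For the converse: a large branch of $\St(S)$ is composed of two large branches of $\St(\tau)$, which meet at a point of $\St(\tau)$ lying on an edge $e$ of $\tau$; I would check that any edge of $\tau$ whose two adjacent large branches of $\St(\tau)$ both point at it is exactly a bottom $\pi$-edge of a unique tetrahedron $\tri$ (using the combinatorics of taut tetrahedra: the large branch in a triangle $t$ attaches to the bottom $\pi$-edge of the tetrahedron below $t$, and the two triangles $t_1,t_2$ whose large branches meet at $e$ are exactly the bottom two faces of $\tri$). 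Then the positivity of the $\St(S)$-branch forces both bottom sectors of $\tri$ to carry positive $S$-weight, which is precisely the condition for an upward flip through $\tri$ to exist.

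The main obstacle I expect is purely bookkeeping: pinning down exactly which edge of $\tau$ a large branch of $\St(\tau)$ is attached to relative to which tetrahedron (the definition ties it to the tetrahedron \emph{below} the triangle, so one must confirm that the pair of triangles whose large branches co-abut a given edge $e$ are the bottom faces of the tetrahedron for which $e$ is the \emph{bottom} $\pi$-edge), and keeping the coorientations and the normal-foliation directions consistent so that "upward" flip matches "large branch of $\St(S)$" and not some downward analogue. Once the local dictionary between bottom sectors of $\tri$ and large branches of $\St(\tau)$ abutting the bottom $\pi$-edge is established, both implications are immediate, which is why the authors state this as an observation rather than a lemma. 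I would present the argument as a single short paragraph built around Figure~\ref{visualizingthetraintrack} and the definition of $\St(\tau)$.
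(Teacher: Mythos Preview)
Your proposal is correct and is exactly the unpacking the paper has in mind: the paper offers no argument beyond the phrase ``By construction, we have the following,'' and your local analysis inside a single taut tetrahedron is precisely that construction made explicit. The one point worth tightening is the forward direction, where you assert that the two half-branches of $\St(S)$ over $t_1$ and $t_2$ ``are glued across the point $\St(\tau)\cap e$''; this uses that the \emph{uppermost} sheet of $S$ over $t_1$ necessarily continues across $e$ into the uppermost sheet over $t_2$ (rather than into some angle-$0$ face at $e$), which follows from the branched surface structure at $e$ since $t_1\cup t_2$ is the ceiling of $N(B_{\mr\tau})$ just below $\tri$---but it is worth saying in one clause.
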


Any curve carried by $\St(S)$ corresponds to a curve carried by $\St(\tau)$ under the map $\St(S)\to \St(\tau)$, and we will abuse terminology slightly by considering a curve carried by $\St(S)$ to also be carried by $\St(\tau)$.

\subsection{Infinite flippability} \label{sec:flippability}

A finite or infinite sequence $\{S_i\}$ of surfaces carried by $B_{\mr\tau}$ such that each successive element is an upward flip of the previous element is called a \textbf{flipping sequence}. Any element of an infinite flipping sequence is called \textbf{infinitely flippable}.

If $A$ is a surface carried by a branched surface with positive weights on each sector, we say $[A]$ is \textbf{fully carried}.

We use these new words in a mathematical sentence:

\begin{observation}\label{fiber implies infinite flippability}
Let $\mr A$ be a surface carried by $B_{\mr\tau}$ which is a fiber of $\mr M$. Because $\tau$ can be built as a layered triangulation on the extension of $\mr A$ to $M'$, $\mr A$ is infinitely flippable and some positive integer multiple of $[\mr A]$ is fully carried by $B_{\mr \tau}$.
\end{observation}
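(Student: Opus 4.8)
The plan is to unpack the two claims in Observation \ref{fiber implies infinite flippability} and show that each follows essentially from the layered structure of $\tau$ relative to $\mr A$, which is available via Theorem \ref{invarianceoftau}. So first I would invoke Theorem \ref{invarianceoftau} (or more precisely the observation in Corollary \ref{VTcarriesface}) to build $\tau$ as a layered triangulation on $\intr(A')$, where $A'$ is the extension of $\mr A$ to a fiber of $M'$; this presents $\tau$ as arising from a bi-infinite maximal splitting sequence of the monodromy's stable train track, with each taut tetrahedron of $\tau$ corresponding to one Whitehead move, equivalently one switch of a stable train track on a dual ideal triangulation. Under this identification $\mr A$ sits in $N(B_{\mr\tau})$ as the layer at ``time $0$,'' and the surface obtained by pushing $\mr A$ up past the next tetrahedron in the layering is exactly an upward flip of $\mr A$ in the sense defined in Section \ref{sec:traintrack}.

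The first claim, infinite flippability, then comes from the fact that the layering of $\tau$ on $A'$ is bi-infinite: there is always a ``next'' tetrahedron above the current layer, and pushing $\mr A$ through it realizes an upward flip by the Observation preceding Section \ref{sec:flippability} (there is an upward flip iff $\St(\mr A)$ contains a large branch, and the layered picture guarantees such a large branch exists because the splitting sequence is infinite and maximal). Iterating produces an infinite flipping sequence with $\mr A$ as its initial element, so $\mr A$ is infinitely flippable by definition. The key point to get right here is that the abstract notion of ``upward flip'' defined combinatorially in Section \ref{sec:traintrack} coincides with ``advance one layer in the layered triangulation,'' which is precisely the content of Agol's construction as recalled in Section \ref{canonical veering section}; I would state this as the crux and cite the construction rather than reprove it.

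For the second claim — that some positive multiple of $[\mr A]$ is fully carried by $B_{\mr\tau}$ — I would argue that because the layering of $\tau$ on $A'$ is periodic (the monodromy is pseudo-Anosov, so the splitting sequence is periodic up to the monodromy action), every tetrahedron of $\mr\tau$ appears in the portion of the layering between $\mr A$ and its image $g(\mr A)$ some layers up. Summing the layers in one period, i.e. taking $\mr A + g(\mr A) + \dots$ over one fundamental domain of the layering — or more simply, observing that the union of flips over one period sweeps across every tetrahedron of $\mr M$ — gives a surface carried by $B_{\mr\tau}$ with positive weight on every sector. That surface represents a positive integer multiple of $[\mr A]$ in $H_2(\mr M,\partial \mr M)$ because each layer is isotopic to $\mr A$. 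Hence $n[\mr A]$ is fully carried for suitable $n$.

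The main obstacle is bookkeeping the identification between the combinatorial flip of Section \ref{sec:traintrack} and the geometric ``advance a layer'' operation, together with verifying that over one period of the layering the flips genuinely touch every tetrahedron of $\tau$ (so that the resulting weight vector is strictly positive on all sectors, not merely nonnegative). Both of these are really restatements of standard facts about layered veering triangulations — the periodicity and the fact that a periodic maximal splitting sequence visits every tetrahedron in each period — so I expect the proof to be short, with the bulk of the work being to phrase these facts precisely in the language established in Sections \ref{canonical veering section} and \ref{sec:traintrack} rather than any new argument. If a fully self-contained treatment were wanted, the delicate point would be confirming maximality of the splitting sequence forces a large branch of $\St(\mr A)$ at every stage; but since the layered structure of $\tau$ is taken as given from Agol's and Gu\'eritaud's work, this can be asserted with a citation.
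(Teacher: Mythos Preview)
Your proposal is correct and matches the paper's own justification: the paper treats this as an Observation whose entire argument is the clause ``Because $\tau$ can be built as a layered triangulation on the extension of $\mr A$ to $M'$,'' and you have simply unpacked that clause. Your expansion---advancing through layers gives the infinite flipping sequence, and summing the layers over one period of the monodromy gives a fully carried multiple of $[\mr A]$---is exactly the intended reading, so there is nothing to add.
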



The cone of homology directions of a flow $F$, denoted $\C_F$, is the smallest closed cone containing the projective accumulation points of nearly closed orbits of $F$. Since in our case $\phi$ is a circular pseudo-Anosov flow, there is a more convenient characterization of $M$ as the smallest closed, convex cone containing the homology classes of the closed orbits of $\phi$ (see the proof of Lemma \ref{it's a nice cone} in Appendix \ref{Fried appendix}). In fact, it suffices to take the smallest convex cone containing a certain \emph{finite} collection of closed orbits. It follows that $\C_\phi$ is a rational convex polyhedral cone.

Let $\tau$ be the veering triangulation of $g$. Define a \textbf{$\tau$-transversal} to be an oriented curve in $M'$ which is \textbf{positively transverse} to the 2-skeleton of $\tau$, i.e. intersects the 2-skeleton only transversely and agreeing with the coorientation of $\tau$.
Let $\C_\tau\subset H_1(M)$ be the smallest closed cone containing the homology class of each closed $\tau$-transversal.

\begin{proposition}\label{a cone is a cone}
Let $\ell$ be a closed $\tau$-transversal. Then $\ell\in \C_\phi\setminus\{0\}$. Moreover, $\C_\phi=\C_\tau$.
\end{proposition}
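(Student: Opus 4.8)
The strategy is to establish the two-way inclusion $\C_\phi = \C_\tau$ by first showing any closed $\tau$-transversal represents a nonzero class in $\C_\phi$, and then showing conversely that every closed orbit of $\phi$ (which generate $\C_\phi$) can be realized, up to homology, by a closed $\tau$-transversal. First I would prove the claim $\ell \in \C_\phi \setminus \{0\}$. The key point is that the $2$-skeleton of $\tau$ (equivalently $B_{\mr\tau}$, away from $U$) is positively transverse to the flow $\phi$; this is implicit in Gu\'eritaud's construction, where the triangulation is layered with respect to the fibration and the coorientation on faces points ``upward'' along $\phi$. Hence a curve positively transverse to the $2$-skeleton of $\tau$ is isotopic (in $M'$, hence in $M$) to a curve positively transverse to $\phi$, i.e. one that can be closed up using short flow segments. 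Such a curve is a ``nearly closed orbit'' in Fried's sense, or more directly: since $\phi$ is circular pseudo-Anosov, $\C_\phi$ is dual to $\cone(\sigma)_{\LD}$ (Fried's theorem together with the discussion in Section \ref{norm background}), and a closed curve positively transverse to a fiber $Y$ has positive intersection with $[Y]$, hence pairs positively with the interior of $\cone(\sigma)_{\LD}$; this forces its class into $\C_\phi$ and, being nonzero against a cohomology class, to be nonzero.

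Next I would show $\C_\tau \subseteq \C_\phi$: this is immediate from the above since $\C_\tau$ is by definition the smallest closed cone containing all closed $\tau$-transversal classes, and each such class lies in the closed cone $\C_\phi$. The substantive direction is $\C_\phi \subseteq \C_\tau$. Since $\C_\phi$ is the smallest closed convex cone containing the homology classes of closed orbits of $\phi$, it suffices to show each closed orbit $\gamma$ of $\phi$ has its homology class in $\C_\tau$. For a non-singular closed orbit $\gamma$, I would isotope $\gamma$ to be positively transverse to the $2$-skeleton of $\tau$: a generic flow line already meets the $2$-skeleton transversely and positively (by the coorientation compatibility), and a closed orbit can be perturbed off the $1$-skeleton while staying positively transverse to the flow, hence to the cooriented $2$-skeleton. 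This makes $\gamma$ a closed $\tau$-transversal, so $[\gamma] \in \C_\tau$. For singular and $\partial$-singular orbits, which live in the $1$-skeleton or on $\partial M$, I would instead push $\gamma$ slightly into a nearby non-singular leaf and flow-isotope, or use that the singular orbit is a limit of nearby closed $\tau$-transversals (each passing through the relevant ladder region near $\partial U_i$ identified in Lemma \ref{ladderseparatrixstructure}); since $\C_\tau$ is closed this places $[\gamma]$ in $\C_\tau$. Taking the closed convex cone generated by all these classes gives $\C_\phi \subseteq \C_\tau$.

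The main obstacle I expect is the careful verification that closed orbits of $\phi$ — particularly the singular orbits lying inside the $1$-skeleton of $\tau$ (recall $\tau$ is a triangulation of $M' = \intr(M) \setminus c$, so the singular orbits are precisely the deleted curves and are ``at infinity'' for $\tau$) — can be approximated by closed $\tau$-transversals with controlled homology class. Here the structural results of the previous subsection are essential: Lemma \ref{ladderseparatrixstructure} describes how the stable/unstable flow prongs of $c_i$ meet $\partial U_i$ in upward/downward ladders, and this lets one build a closed curve winding around $\partial U_i$ that is positively transverse to $B_{\mr\tau}$ and freely homotopic in $U_i$ to the core $c_i$. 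I would also need to confirm that the ``nearly closed orbits'' in Fried's definition of $\C_F$ contribute nothing beyond the closed orbits in the circular pseudo-Anosov case — but this is exactly the alternative characterization of $\C_\phi$ cited in the excerpt (proof of Lemma \ref{it's a nice cone} in Appendix \ref{Fried appendix}), which I am entitled to assume. With that in hand, the equality $\C_\phi = \C_\tau$ follows by mutual inclusion of closed convex cones.
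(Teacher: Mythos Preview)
Your proposal is correct and follows essentially the same route as the paper: both directions use that non-singular closed orbits of $\phi$ are already $\tau$-transversals while singular and $\partial$-singular orbits are isotoped onto $\tau$-transversals in the ladders on $\partial\mr M$, and the inclusion $\C_\tau\subset\C_\phi$ goes through the duality $\C_\phi=\cone(\sigma_{\LD})^\vee$ from Fried's theorem. The only place the paper is slightly more explicit is in verifying that a $\tau$-transversal $\ell$ pairs positively with \emph{every} class in $\intr(\cone(\sigma_{\LD}))$, not just one fiber---it does this by taking an arbitrary integral $\beta\in\intr(\cone(\sigma))$, representing a multiple $n\mr\beta$ by a \emph{fully carried} surface in $B_{\mr\tau}$ (via Observation~\ref{fiber implies infinite flippability}), and computing the intersection; your phrasing elides this step but it is exactly what is needed to pass from ``positive on one fiber'' to ``positive on the whole open cone.''
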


\begin{proof}
Since both cones are closed, to show $\C_\phi=\C_\tau$ it suffices to show that the homology class of every closed orbit lies in $\C_\tau$, and that the homology class of every closed $\tau$-transversal lies in $\C_\phi$.

Suppose $o$ is a closed orbit of $\phi$. If $o$ lies interior to $\intr(M)\setminus c$, then $o$ is already a closed $\tau$-transversal. Otherwise $o$ is a singular or $\partial$-singular orbit and can be isotoped onto a closed $\tau$-transversal lying in the interior of a ladder in $\partial \mr M$. Hence $\C_\phi\subset \C_\tau$.

Now suppose $\ell$ is a closed $\tau$-transversal. We can isotope $\ell$ into $\mr M$ such that $\ell$ is positively transverse to $B_{\mr\tau}$.
Let $\beta\in\intr(\cone(\sigma))$ be an integral class, and let $\mr B$ be a representative of $\mr \beta$ carried by $B_{\mr\tau}$. Since $\mr B$ is a fiber of $\mr M$, by Observation \ref{fiber implies infinite flippability} there exists a surface $n\mr B$ (topologically $n\mr B$ is $n$ parallel copies of $\mr B$ for some positive integer $n$) representing $n\mr\beta$ which is fully carried by $B_{\mr\tau}$. We can cap off the boundary components of $n\mr B$ to obtain a surface $nB$ in $M$ representing $n\beta$ whose intersection with $\mr M$ is $n\mr B$. Since $\ell$ is positively transverse to $B_{\mr\tau}$, it has positive intersection with $nB$. Letting $\beta_{\LD}$ denote the Lefschetz dual of $\beta$, we see $\beta_{\LD}([\ell])>0$. Viewing $[\ell]$ as a linear functional on $H^1(M)$, we see $[\ell]$ is strictly positive on $\intr(\sigma_{\LD})$, so $[\ell]$ is nonnegative on $\cone(\sigma_{\LD})$, whence $\ell\in \cone(\sigma_{\LD})^\vee\setminus\{0\}$.

Note that by Theorem \ref{generalized Fried} we have $\cone(\sigma_{\LD})=(\C_\phi)^\vee$. Hence $\cone(\sigma_{\LD})^\vee=\C_\phi$, so $[\ell]\in \C_\phi\setminus\{0\}$. \end{proof}

\begin{proposition}\label{flipping fibers}
A surface carried by $B_{\mr\tau}$ is a fiber of $\mr M$ if and only if it is infinitely flippable.
\end{proposition}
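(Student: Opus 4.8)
The plan is to prove the two directions separately, exploiting the machinery already in place. The forward direction (fiber $\implies$ infinitely flippable) is already essentially recorded in Observation \ref{fiber implies infinite flippability}: if $\mr A$ is carried by $B_{\mr\tau}$ and is a fiber of $\mr M$, then by Theorem \ref{invarianceoftau} the veering triangulation $\tau$ can be built as a layered triangulation on the extension of $\mr A$ to $M'$, and the layering exhibits an infinite flipping sequence starting at $\mr A$ (each layer glued on corresponds to an upward flip). So I would dispatch this direction in a sentence or two by citing that observation.

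For the converse (infinitely flippable $\implies$ fiber), suppose $\{S_i\}_{i\ge 0}$ is an infinite flipping sequence with $S_0 = S$. The first step is to understand what an infinite sequence of upward flips does to homology and to the carried weights. Each upward flip is an isotopy, so $[S_i] = [S]$ for all $i$; in particular the homology class $\alpha := [S]$ is fixed, and by Corollary \ref{VTcarriesface} it lies in $\cone(\mr\sigma)$. The real content is to show $\alpha \in \intr(\cone(\mr\sigma))$, equivalently that $S$ is a fiber (by the facts recalled in Section \ref{norm background}, an integral class in the interior of a fibered cone is represented by a fiber, and a surface carried by $B_{\mr\tau}$ representing such a class can be taken to be the fiber itself since taut representatives in a fibered class are isotopic to the fiber). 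To get interiority I would use Proposition \ref{a cone is a cone} together with a pairing argument: it suffices to show that $\alpha_{\LD}$ (as a functional on $H^1$, i.e. as an element of $H_1$ paired against cohomology) — more precisely that $[S]$ paired against the homology class of \emph{every} closed $\tau$-transversal is strictly positive, since $\C_\phi = \cone(\sigma_{\LD})^\vee$ and interiority of $\alpha$ in $\cone(\sigma)$ is equivalent to $\alpha_{\LD}$ being strictly positive on $\C_\phi \setminus \{0\}$. So the goal reduces to: for every closed $\tau$-transversal $\ell$, $\langle S, \ell\rangle > 0$.

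To establish that positivity, here is where the infinite flipping sequence does its work. A closed $\tau$-transversal $\ell$, isotoped to be positively transverse to $B_{\mr\tau}$, passes through finitely many tetrahedra; each time $\ell$ crosses from the bottom to the top of a tetrahedron $\tri$ it must cross a sector of $B_{\mr\tau}$. The key claim is that if $S$ has a zero weight on some sector, then after finitely many upward flips that zero "propagates" in a controlled way, and the obstruction to continuing the flipping sequence forever is exactly that some zero-weight region eventually blocks every available flip — unless the weights are everywhere positive on the relevant part, i.e. unless $S$ fully carries (a multiple of) $\alpha$. Concretely: I expect to show that an infinitely flippable surface must, after finitely many flips, become fully carried (every sector positive), because an upward flip is possible at $\tri$ precisely when both bottom sectors of $\tri$ carry positive weight (this is the definition of the flip), and one argues that if some sector stays at weight zero forever then only finitely many flips are available — the flipping "uses up" the positive region. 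Once a surface $S_N$ in the sequence is fully carried, then for any closed $\tau$-transversal $\ell$ we have $\langle S, \ell \rangle = \langle S_N, \ell\rangle > 0$ because $\ell$ crosses $B_{\mr\tau}$ positively and $S_N$ has positive weight on every sector $\ell$ meets. This gives $\alpha \in \intr(\cone(\mr\sigma))$, hence $S$ is a fiber.

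The main obstacle is the claim that infinite flippability forces eventual full carrying (equivalently, that a surface with a persistent zero weight admits only finitely many upward flips). This requires a careful combinatorial analysis of how zero-weight sectors behave under upward flips in a veering triangulation — one needs to track, using the $L/R$ veering structure and the branch structure of $\St(\tau)$, that the set of sectors with zero weight can only shrink or "move upward" finitely often before no large branch of $\St(S_i)$ is available (recall the Observation in Section \ref{sec:traintrack}: an upward flip exists iff $\St(S_i)$ has a large branch). A clean way to organize this: associate to each $S_i$ the finite set of edges of $\tau$ on which $S_i$ has zero intersection, show an upward flip cannot increase this set and strictly decreases it or strictly decreases some secondary complexity when it doesn't, so an infinite sequence must stabilize to the empty set, i.e. full carrying. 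I would also need the minor point that "fully carried representative of $\alpha$ in a fibered cone" forces $\alpha$ into the open cone, which follows from Corollary \ref{VTcarriesface} and the characterization of $\cone(\mr\sigma)$, together with the fact (Section \ref{norm background}) that the fiber is the unique taut representative of its class.
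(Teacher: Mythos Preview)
Your overall architecture matches the paper's exactly: dispatch the forward direction via Observation~\ref{fiber implies infinite flippability}, then argue that an infinite flipping sequence forces (a multiple of) $[S]$ to be fully carried, then use positive pairing with closed $\tau$-transversals and Proposition~\ref{a cone is a cone} / Theorem~\ref{generalized Fried} to land in the interior of $\cone(\mr\sigma)$.

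The gap is in your proposed proof of the key claim ``infinitely flippable $\Rightarrow$ eventually fully carried.'' Your suggested complexity --- the set of edges of $\tau$ on which $S_i$ has zero weight --- is \emph{not} monotone under upward flips. An upward flip through a tetrahedron $\tri$ decreases by one the weight on the bottom $\pi$-edge of $\tri$; if that weight was $1$ it becomes $0$, so a new zero-weight edge is created. So the statement ``an upward flip cannot increase this set'' is false, and the hedge ``or strictly decreases some secondary complexity'' has no content until such a complexity is produced. (Relatedly, it need not be true that some single $S_N$ in the sequence is fully carried; what one actually gets is that every sector is positive at \emph{some} stage, and one takes a sum of weight systems to get a fully carried multiple of $[S]$.)

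The paper's argument for this step is substantively different and worth absorbing. Rather than tracking zero weights, it looks at the region $R\subset \mr M$ swept out by the entire flipping sequence. If $R\ne \mr M$, the boundary torus combinatorics of $\mr\tau$ (ladders, ladderpoles, outward-pointing edges) produce a tetrahedron $t\not\subset R$ whose bottom $\pi$-edge $e$ lies in $R$ --- so $e$ is an edge of some $S_k$ that is \emph{never} flipped away. Viewing $\{S_i\}_{i\ge k}$ as a sequence of diagonal exchanges on a fixed reference surface $\Sigma$, $e$ is present in every triangulation. Now one uses a local finiteness fact: since each edge of $\tau$ meets only finitely many tetrahedra, any edge of $\Sigma$ can be adjacent to only finitely many diagonal exchanges before it either disappears or its neighboring triangles freeze. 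Starting from the permanent edge $e$, this freezing propagates across $\Sigma$ and forces the sequence to terminate. The veering structure enters through the ladder combinatorics used to locate the stuck edge $e$; it is not clear how to replace this by a pure weight-counting argument.
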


\begin{proof}
One direction of this is just Observation \ref{fiber implies infinite flippability}.

For the other direction, we will show that if a flipping sequence is such that there is a 3-cell of $\mr\tau$ which is not swept across by a flip in the sequence, then the sequence is finite. Therefore if $S$ is infinitely flippable, some integer multiple of $[S]$ will be fully carried by $B_{\mr\tau}$. Any fully carried homology class has positive intersection with any closed transversal to $\tau$ and thus represents a fiber by Proposition \ref{a cone is a cone} and Theorem \ref{generalized Fried}.

Let $\{S_i\}$ be a flipping sequence. We assume $S_1$ is connected, as otherwise we can apply the following reasoning to the flipping sequence associated to each component of $S_1$. Let $R$ be the union of all 2-cells in every $S_i$, as well as every 3-cell swept across by an upward flip in the sequence. Suppose $R\subsetneq M$. We claim there is some 3-cell of $\mr\tau$ whose bottom $\pi$-edge lies in $R$ and which is not swept across by an upward flip in our sequence.

There is some 3-cell $t\not\subset R$ incident to $R$ along one of its edges. This edge meets two torus boundary components of $\mr M$; pick one and call it $T$. Then $T\cap R$ is a nonempty proper subset of $T$ which is simplicial with respect to the triangulation of $T$ by flat triangles coming from $\mr \tau$. To find a 3-cell not swept across by an upward flip whose bottom $\pi$-edge lies in $R$, it suffices to find a flat triangle $\tri$ in $T$ such that $\tri\not\subset T\cap R$ and such that $\tri$ has an edge lying in $T\cap R$ whose coorientation points into $\tri$. This is equivalent to finding an edge of $T\cap R$ lying in $\partial(T\cap R)$ whose coorientation points into $T\setminus(T\cap R)$. We will call such an edge an \textbf{outward pointing edge} of $T\cap R$.

To find an outward pointing edge of $T\cap R$, we use our knowledge of the combinatorics of the triangulation of $T$. If the intersection of $T\cap R$ with the interior of some ladder is a nonempty proper subset of the ladder, we can find a rung of that ladder which is an outward pointing edge of $T\cap R$. Otherwise, $T\setminus(T\cap R)$ contains the interior of at least one ladder. As was observed in \cite{Lan18}, any closed curve carried by $\partial \mr \tau\cap T$ which is not a ladderpole curve must traverse each ladder of $T$. We conclude that $\partial S_i\cap T$ must be a collection of ladderpole curves for all $i$. Any two edges of a ladderpole which share a vertex cannot correspond to a pair of 2-cells of $\mr\tau$ forming the bottom of a 3-cell, because each bottom of a 3-cell intersects $T$ in at least one rung. Therefore there are no flips of $S_i$ incident to $T$ for any $i$ and we conclude that $R\cap T$ is a collection of ladderpole curves, so every edge of $R\cap T$ is outward pointing.

Hence, as claimed, we can produce a 3-cell $t\not\subset R$ of $\mr\tau$ whose bottom $\pi$-edge $e$ lies in $R$, and thus in $S_k$ for some $k$. (We have actually produced $t$ with one of its bottom faces lying in $R$).


Any surface carried by $B_{\mr\tau}$ inherits an ideal triangulation of its interior, and a flipping sequence gives a sequence of diagonal exchanges between ideal triangulations of a reference copy of the surface. Let $\Sigma$ be a reference copy of $S_k$. Then $\{S_i\}_{i\ge k}$ gives a sequence of ideal triangulations of $\Sigma$ related by diagonal exchanges, and $e$ is an edge of each triangulation. 

We say an edge in this sequence of diagonal exchanges is \textbf{adjacent } to a particular diagonal exchange if it is a boundary edge of the ideal quadrilateral whose diagonal is exchanged.

Because each edge of $\mr \tau$ is incident to only finitely many tetrahedra, each edge in this sequence of ideal triangulations of $\sigma$ can be adjacent to only finitely many diagonal exchanges before it either disappears or remains forever. Since $e$ is present in each triangulation, there exists $j\ge k$ and edges $e', e''$ which form a triangle with $e$ such that the triangle $(e,e',e'')$ is present in $S_i$ for $i\ge j$. Since $e'$  is adjacent to only finitely many diagonal exchanges, it is also eventually incident to a triangle $(e',e''',e'''')$ that is fixed by the sequence of diagonal exchanges, and similarly for $e''$. Each triangulation has the same number of triangles, so continuing in this way we eventually cover $\Sigma$ by triangles which are fixed. Therefore the sequence is finite.
\end{proof}

\subsection{Stable loops}\label{sec:stableloops}

Let $\lambda$ be a closed curve carried by $\St(\tau)$. If $\lambda$ has the property that it traverses alternately small and large branches of $\St(\tau)$, we call $\lambda$ a \textbf{stable loop}. If $\lambda$ additionally has the property that it traverses each switch of $\St(\tau)$ at most once, then we say $\lambda$ is a \textbf{minimal stable loop}. Since $\tau$ consists of finitely many ideal tetrahedra, $\St(\tau)$ has finitely many switches and thus finitely many minimal stable loops.
We endow each stable loop $\lambda$ with an orientation such that at a switch in the interior of a 2-cell, $\lambda$ passes from a large branch to a small branch (see Figure \ref{lazyriver}).

Note that for any veering triangulation $\rho$, $\St(\rho)$ has stable loops. It is easily checkable that for any 2-cell $\tri$ of $\mr\rho$, $\St(\rho)\cap \tri$ determines the left or right veeringness of two out of the three edges of $\tri$ not lying in the boundary of the ambient 3-manifold, as shown in Figure \ref{veeringrule}. 
\begin{figure}[h]
\centering
\includegraphics[height=1in]{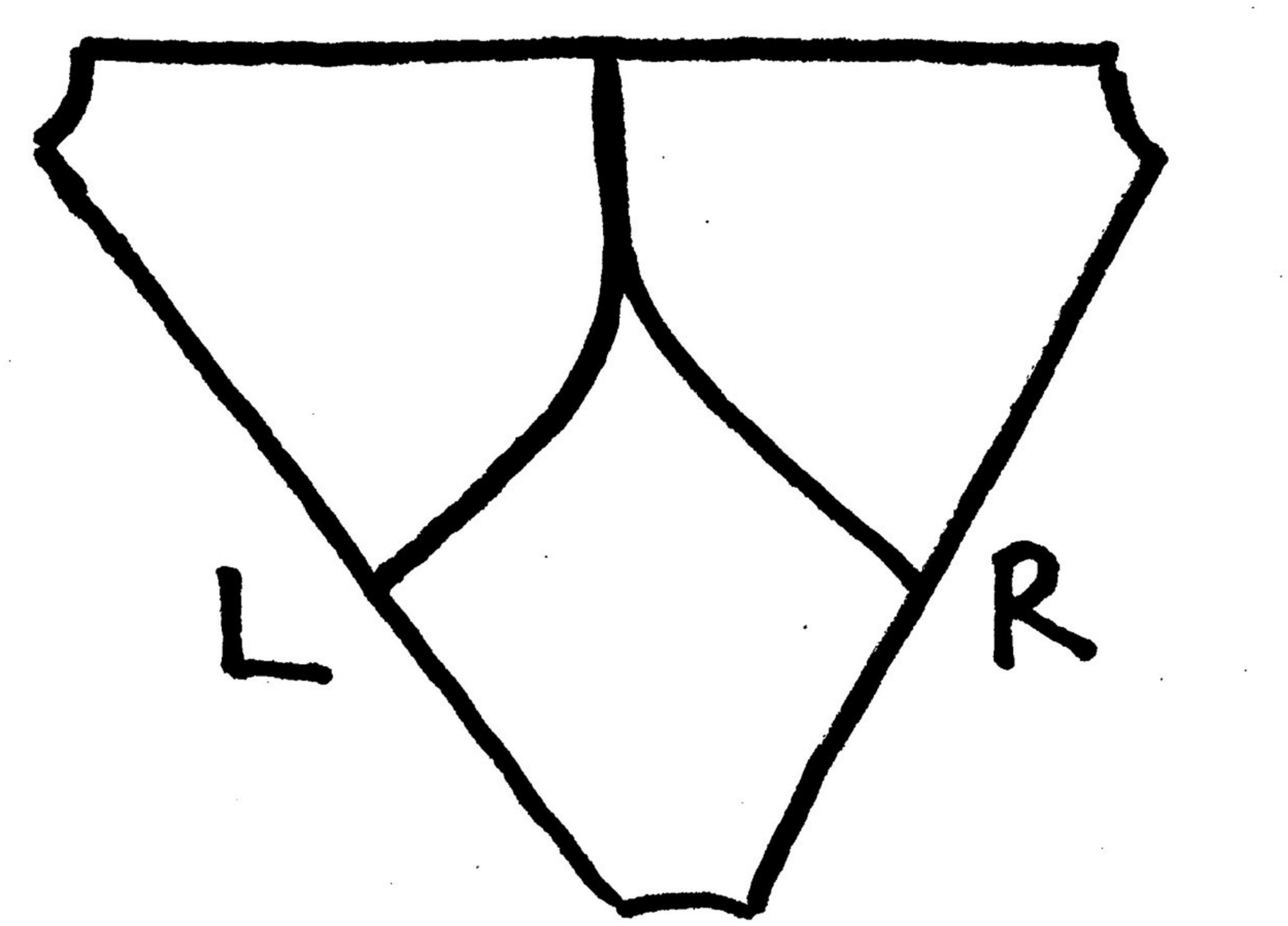}
\caption{The intersection of the stable traintrack of a veering triangulation restricted to any 2-cell determines the left or right veeringness of two edges as shown, where the coorientation is pointing out of the page.}
\label{veeringrule}
\end{figure}
The small branch of $\St(\rho)$ incident to a right veering edge is called a \textbf{right small branch}, and a \textbf{left small branch} is defined symmetrically.
To produce a stable loop we can choose, for example, the left ladderpole $L$ of an upward ladder and look at the collection $\Lambda$ of all 2-cells of $\mr\rho$ meeting $L$. The edges of $\mr\rho$ corresponding to vertices in $L$ are all right veering by Lemma \ref{lem:veeringrule}, so $\St(\rho)\cap \Lambda$ is as shown in Figure \ref{ladderpoleloop}, 
\begin{figure}[h]
\centering
\includegraphics[height=2in]{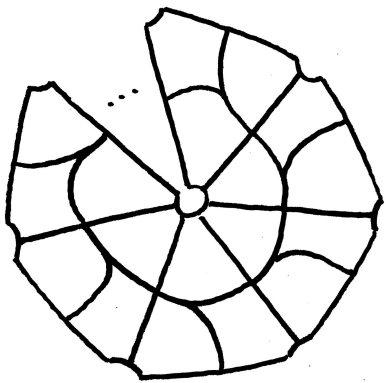}
\caption{The 2-cells incident to a ladderpole give rise to a stable loop.}
\label{ladderpoleloop}
\end{figure}
and carries a stable loop. We claim this stable loop is also minimal, or equivalently that the edges of $L$ are in bijection with the 2-cells of $\Lambda$. This is true: for each 2-cell $\tri$ meeting $L$, $\tri$ is incident to $L$ along the \emph{unique} truncated ideal vertex bounded by the edges meeting the large branch and right small branch of $\St(\rho)\cap \tri$.

Observe that for any stable loop $\lambda$ carried by $\St(\tau)$, we have $[\lambda]\in \C_\tau$. This is because the condition that $\lambda$ traverses alternately large and small branches of $\St(\tau)$ implies that $\lambda$ can be perturbed to a closed $\tau$-transversal, as shown in Figure \ref{lazyriver3d}.
Hence we have the following Lemma.

\begin{lemma}\label{looptodirection}
Let $\lambda$ be a stable loop. Then $[\lambda]\in \C_\phi$.
\end{lemma}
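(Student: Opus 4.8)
The plan is to reduce Lemma~\ref{looptodirection} to Proposition~\ref{a cone is a cone}, which already identifies $\C_\phi$ with $\C_\tau$, the closed cone generated by homology classes of closed $\tau$-transversals. So it suffices to exhibit, for a given stable loop $\lambda$, a closed $\tau$-transversal in $M'$ homologous to $\lambda$; then $[\lambda]\in\C_\tau=\C_\phi$ by Proposition~\ref{a cone is a cone}. This is exactly the content foreshadowed in the sentence preceding the lemma and by Figure~\ref{lazyriver3d}, so the proof is short.

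First I would recall that $\lambda$ is carried by $\St(\tau)$, which lives in the 2-skeleton of $\tau$, and that $\lambda$ traverses alternately small and large branches. The key local observation is that near a switch $s$ interior to a 2-cell $t$, the train track $\St(\tau)$ sits in the plane $t$, and with respect to the coorientation of $t$ one of the three complementary regions of $\St(\tau)\cap t$ has a cusp at $s$; the large branch is the one disjoint from the cusped region. I would check that when $\lambda$ passes from a large branch into a small branch at such a switch (which is how we oriented stable loops), the local picture allows $\lambda$ to be pushed slightly off the 2-skeleton to the coorientation-positive side as it crosses the edge of $\tau$ hit by the large branch, and then brought back across the 2-skeleton transversely and positively (agreeing with the coorientation) as it travels along the small branch. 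In other words, each small-branch/large-branch pair of $\lambda$ contributes a single positive transverse intersection with the 2-skeleton of $\tau$, and no negative ones; this is precisely the perturbation depicted in Figure~\ref{lazyriver3d}. Performing this perturbation consistently around all of $\lambda$ yields a closed curve $\ell$ in $M'$, isotopic to $\lambda$ (hence homologous to it), which is positively transverse to the 2-skeleton of $\tau$, i.e.\ a closed $\tau$-transversal.

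Then the conclusion is immediate: $[\ell]=[\lambda]$ and by definition $[\ell]\in\C_\tau$, so by Proposition~\ref{a cone is a cone}, $[\lambda]=[\ell]\in\C_\tau=\C_\phi$.

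The main obstacle — really the only point requiring care — is the local verification that the alternation of small and large branches is exactly what is needed to perform the perturbation into a \emph{positive} transversal, with the coorientations working out so that every crossing of the 2-skeleton is positive. If $\lambda$ traversed two large branches in a row, or two small branches in a row, one would be forced into a negative crossing or into a crossing that cannot be resolved transversely at all; the alternating condition rules this out. This is genuinely a picture-level argument (as the authors acknowledge by referring to Figure~\ref{lazyriver3d}), and in a full write-up I would make it precise by examining the coorientation of $\St(\tau)\cap t$ inside each 2-cell $t$ and tracking how $\lambda$ enters and exits $t$ relative to the cusp.
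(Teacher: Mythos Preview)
Your proposal is correct and follows essentially the same approach as the paper: perturb the stable loop to a closed $\tau$-transversal using the alternating small/large branch condition (as in Figure~\ref{lazyriver3d}), then invoke Proposition~\ref{a cone is a cone}. The paper's own argument is just the sentence preceding the lemma statement together with the figure, so your write-up is if anything more detailed than the original.
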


\begin{figure}[h]
\centering
\includegraphics[height=1.5in]{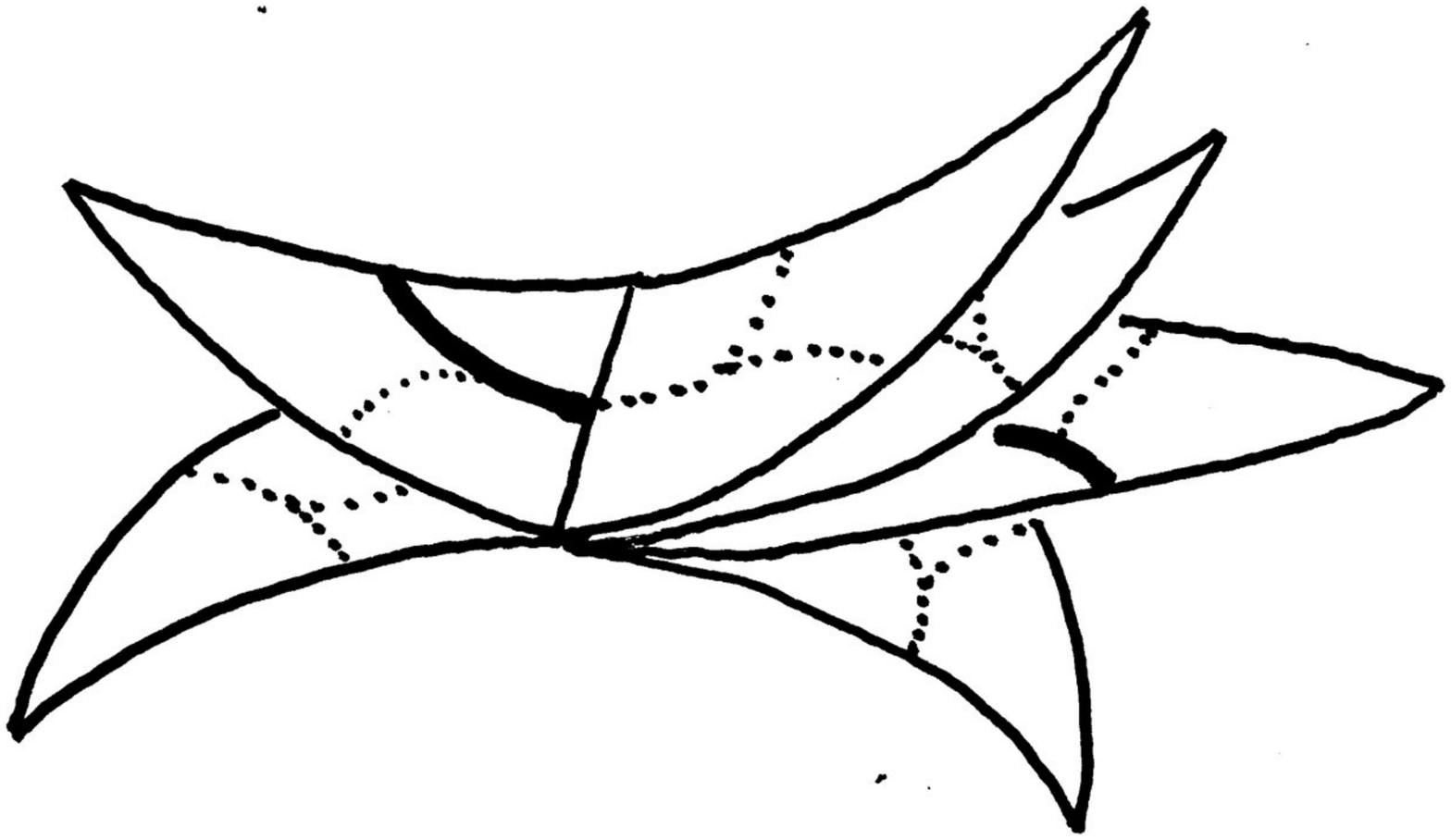}
\includegraphics[height=1.5in]{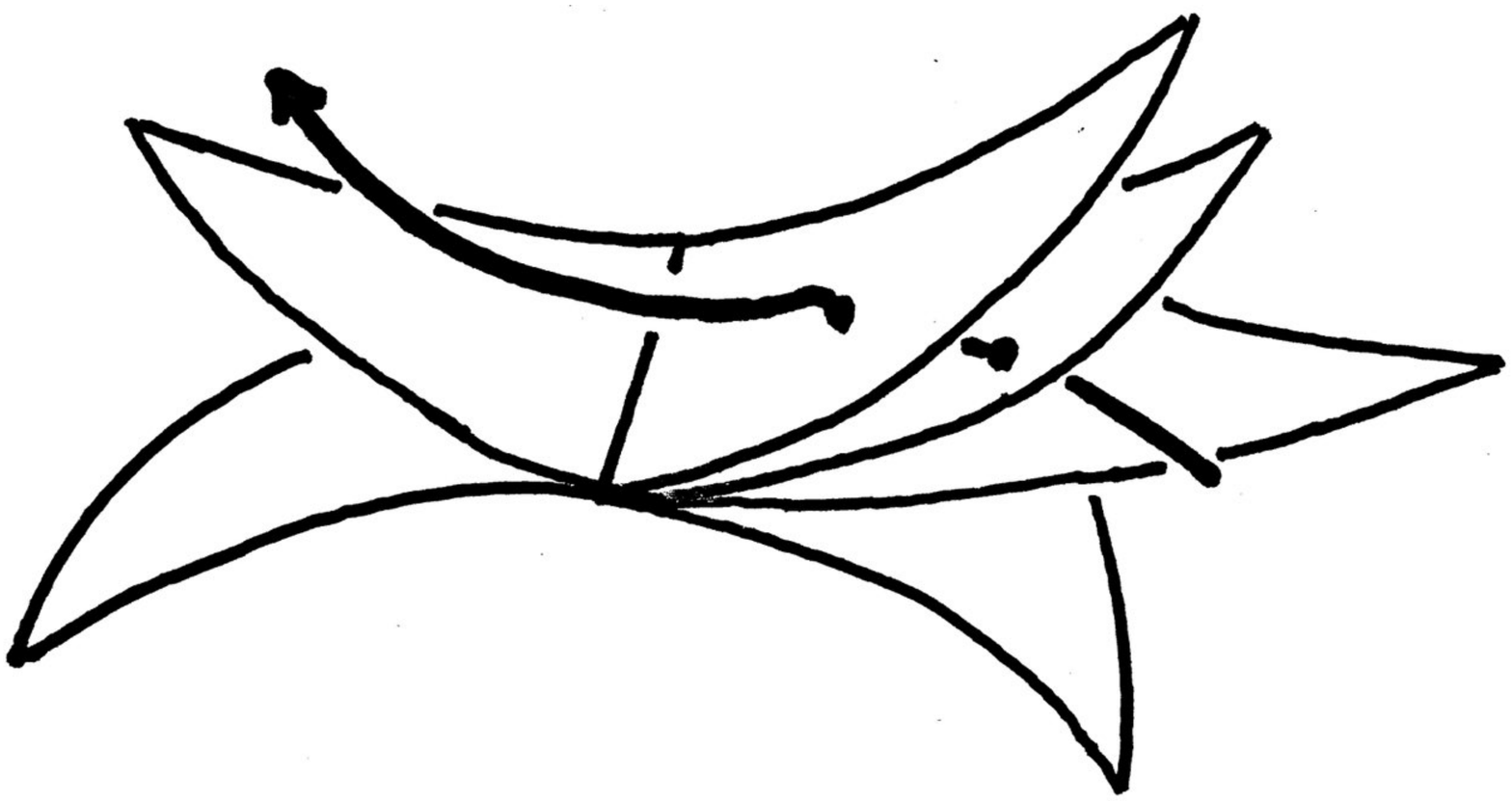}
\caption{A stable loop may be perturbed to a closed $\tau$-transversal.}
\label{lazyriver3d}
\end{figure}

Let $S$ be a surface carried by $\mr \tau$ which is not a fiber of a fibration $\mr M\to S^1$. By Theorem \ref{flipping fibers}, any flipping sequence starting with $S$ is finite. Therefore $S$ is isotopic to a surface $S'$ which carried by $B_{\mr\tau}$ such that $S'$ has no upward flips. We call such a surface \textbf{unflippable}.

\begin{proposition}\label{unflippabletolazy}
Let $S$ be a surface carried by $B_{\mr\tau}$ which is unflippable. Then $\St(S)$ carries a stable loop of $\tau$.
\end{proposition}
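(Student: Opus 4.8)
The plan is to argue that on an unflippable surface $S$, every switch of $\St(S)$ is either a \emph{small} switch (no large branch emanates from it) or belongs to a portion of $\St(S)$ that is forced to close up into a stable loop; then extract an actual closed curve alternating small and large branches. The starting point is the Observation in Section \ref{sec:traintrack}: since $S$ is unflippable, $\St(S)$ contains no large branch, i.e.\ no branch of $\St(S)$ is built from two large branches of $\St(\tau)$. So for each triangle-switch $s$ of $\St(S)$ sitting over a triangle $t$ of $\mr\tau$, the large branch of $\St(\tau)\cap t$ is covered by $\St(S)$ on \emph{at most one} side — which means the large branch of $\St(\tau)\cap t$ is still (half-)present in $\St(S)$, running from $s$ to the bottom $\pi$-edge of $t$ but not continuing into the triangle on the other side of that edge as another large branch.

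First I would set up the local picture at an edge $e$ of $\mr\tau$ carried with positive weight by $S$: $e$ corresponds to a valence-$\ge 4$ switch of $\St(\tau)$, and on each of the two triangles $t$ for which $e$ is the bottom $\pi$-edge, the large branch of $\St(\tau)\cap t$ runs into $e$. Unflippability says we cannot have both of those large branches simultaneously carried by $S$ with positive weight — otherwise an upward flip through the tetrahedron below $t$ on one side would exist. (This is exactly the content of the Observation, and it is where I expect the genuine combinatorial care to be needed: translating ``no large branch of $\St(S)$'' into ``no edge $e$ of $S$ has large branches feeding into it from both adjacent flippable tetrahedra''.) The upshot is that following $\St(S)$ \emph{forward} from any large (half-)branch — i.e.\ arriving at an edge $e$ along a large branch — we are forced, on the other side of $e$, to continue along a \emph{small} branch of $\St(\tau)$, since the large branch there is unavailable. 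Dually, following a small branch of $\St(S)$ forward we arrive at a triangle-switch $s$, and from $s$ we can always continue along the large branch of $\St(\tau)\cap t$ (it is present on at least one side, and at a trivalent switch the large branch is the unique one not on the cusped side). So on an unflippable $S$, the train route ``large, small, large, small, \ldots'' can always be continued.

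Next I would run a pigeonhole/finiteness argument: $\St(\tau)$ — hence $\St(S)$ — has only finitely many switches, and the process of repeatedly ``take the large branch out of the current triangle-switch, then the forced small branch into the next triangle-switch'' is deterministic once a starting oriented large branch is fixed, so the sequence of switches visited must eventually repeat. That repetition gives a closed curve in $\St(S)$ traversing alternately small and large branches of $\St(\tau)$ — a stable loop of $\tau$ carried by $\St(S)$, as desired. (If one wants minimality one would further take the first repeated switch, but the statement only asks for a stable loop, so this is not needed.) To \emph{start} the process I need at least one large branch of $\St(\tau)$ to be (half-)present in $\St(S)$; this holds because $S$ is carried with some positive weights, so some triangle-switch of $\St(S)$ exists, and out of it the large branch of the underlying $\St(\tau)$-triangle is present on the side carried by $S$.

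The main obstacle, as flagged, is the careful bookkeeping at an edge $e$ where $S$ has positive weight but the two flanking triangles lie over \emph{the same} pair of sectors in a single tetrahedron versus over different tetrahedra — I must make sure ``the large branch on the far side is unavailable'' is correctly deduced from unflippability (it could a priori be available but carried with zero weight on one of its two constituent small branches, which is fine; what is excluded is the full large branch of $\St(S)$). A secondary subtlety is that $\St(S)\to\St(\tau)$ is neither injective nor surjective, so I should phrase the whole argument in $\St(\tau)$, tracking which branches are ``hit'' by $S$, and only at the end note that the closed route lifts to (is carried by) $\St(S)$ by construction. Once those points are pinned down, the finiteness/pigeonhole conclusion is immediate.
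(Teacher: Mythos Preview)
Your proposal is correct and follows the same strategy as the paper: the deterministic walk ``exit each trivalent switch along its large half-branch'' is well-defined on $\St(S)$ precisely because unflippability forbids large branches (so you always \emph{arrive} along a small half-branch), and finiteness forces the walk to close up into a curve alternating large and small branches of $\St(\tau)$. The paper's proof is three sentences because it works entirely in the trivalent train track $\St(S)$ rather than in $\St(\tau)$; the obstacles you flag (edge bookkeeping, zero-weight half-branches, lifting back to $\St(S)$) dissolve in that framing, since each branch of $\St(S)$ already packages its two $\St(\tau)$-halves and the constructed curve is carried by $\St(S)$ by construction.
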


\begin{proof}
The unflippability of $S$ is equivalent to $\St(S)$ having no large branches. We define a curve carried by $\St(S)$ as follows: start at any switch of $\St(S)$, and travel along its large half-branch. When arriving at the next switch, exit along that switch's large half-branch, and so on. Since $\St(S)$ has finitely many branches, eventually the path will return to a branch it has previously visited, at which point we obtain a closed curve carried by $\St(S)$. By construction it alternates between large and small branches of $\St(\tau)$. 
\end{proof}

 We now prove the paper's main result. Recall that the subscript $LD$ attached to an object denotes its image under Lefschetz duality. 

\begin{theorem}[Stable loops]\label{stable loop theorem}
Let $M$ be a compact hyperbolic 3-manifold with fibered face $\sigma$. Let $\tau$ and $\phi$ be the associated veering triangulation and circular pseudo-Anosov flow, respectively. Then $\C_\phi$ is the smallest convex cone containing the homology classes of the minimal stable loops of $\tau$.
\end{theorem}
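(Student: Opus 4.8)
The plan is to prove the two inclusions separately, using the combinatorial machinery built up in Sections~\ref{sec:traintrack}--\ref{sec:stableloops} together with the duality dictionary between $\C_\phi$, $\cone(\sigma_{\LD})$, and carried/flippable surfaces. Write $\C_{\min}$ for the convex cone generated by the homology classes of the minimal stable loops of $\tau$. The easy inclusion is $\C_{\min}\subseteq\C_\phi$: every minimal stable loop is in particular a stable loop, so by Lemma~\ref{looptodirection} its class lies in $\C_\phi$, and $\C_\phi$ is convex, so $\C_{\min}\subseteq\C_\phi$. The work is entirely in the reverse inclusion $\C_\phi\subseteq\C_{\min}$.

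For $\C_\phi\subseteq\C_{\min}$ I would argue by duality on the facets of $\C_\phi$. Recall from the convex-geometry subsection that $F\mapsto F^*$ gives a bijection between the one-dimensional faces of $\C_\phi^\vee=\cone(\sigma_{\LD})$ and the facets of $\C_\phi$ --- equivalently, $\C_\phi=\bigcap_j \{v \mid u_j(v)\ge 0\}$ where the $u_j$ run over the (finitely many) extreme rays of $\cone(\sigma_{\LD})$, i.e. the one-dimensional faces of $B_x(M)_{\LD}$'s cone corresponding to $\sigma$. So it suffices to show that for each extreme ray $\R_{\ge 0}\cdot u$ of $\cone(\sigma_{\LD})$, the functional $u$ is nonnegative on every minimal stable loop AND the hyperplane $\ker u$ actually contains a minimal stable loop (so that $\C_{\min}$ is cut out by exactly the same inequalities and touches each bounding hyperplane). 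Nonnegativity is automatic from $\C_{\min}\subseteq\C_\phi$ and $u\in\C_\phi^\vee$. The crux is therefore: \emph{for each facet $G=\ker u\cap\C_\phi$ of $\C_\phi$, there is a minimal stable loop $\lambda$ with $u([\lambda])=0$, i.e.\ $[\lambda]\in G$.} Granting this, $\C_{\min}$ meets the relative interior of every facet of $\C_\phi$ and contains no point outside $\C_\phi$, and since $\C_\phi$ is a full-dimensional rational polyhedral cone this forces $\C_{\min}=\C_\phi$.

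To produce such a $\lambda$, translate $u\in\partial\cone(\sigma_{\LD})$ into a surface: $u$ (up to scaling) is Lefschetz dual to an integral class $\alpha\in\partial\cone(\sigma)$, hence $\mr\alpha=P(\alpha)$ is carried by $B_{\mr\tau}$ by Corollary~\ref{VTcarriesface}, say by a surface $S$; since $\alpha\notin\intr\cone(\sigma)$, $S$ is not a fiber of $\mr M$, so by Proposition~\ref{flipping fibers} (equivalently Theorem~\ref{flipping fibers}) $S$ is not infinitely flippable. Replacing $S$ by the terminal surface of a maximal flipping sequence, we may assume $S$ is unflippable, and then Proposition~\ref{unflippabletolazy} gives a stable loop $\lambda_0$ carried by $\St(S)\subset\St(\tau)$. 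A stable loop carried by $\St(S)$ can be perturbed to a closed $\tau$-transversal lying in $N(B_{\mr\tau})$ that misses $S$ entirely (it runs ``alongside'' the branch surface in the complement of the weight-zero region, cf.\ Figure~\ref{lazyriver3d}), so it has zero algebraic intersection with $S$; hence $u([\lambda_0])=\alpha_{\LD}([\lambda_0])=0$, i.e.\ $[\lambda_0]\in G$. Finally, a stable loop need not be minimal, but any stable loop decomposes: traversing large/small branches alternately, once it revisits a switch it closes up, so a stable loop is (homologous to) a sum of closed sub-loops each of which visits every switch at most once, i.e.\ a sum of minimal stable loops --- and each of these, lying in $\St(S)$, is again a $\tau$-transversal disjoint from $S$, hence killed by $u$. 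So at least one minimal summand $\lambda$ has $[\lambda]\in G\setminus\{0\}$ (it is nonzero in $\C_\phi$ by Proposition~\ref{a cone is a cone}), completing the reverse inclusion.

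The main obstacle I anticipate is the last step --- decomposing an arbitrary stable loop carried by $\St(S)$ into minimal ones while keeping each piece a genuine \emph{stable} loop (alternating large/small) and still carried by $\St(S)$, so that the disjointness-from-$S$ argument still applies to each piece. One has to check that cutting the loop at a repeated switch and closing up the two resulting sub-loops does not break the large/small alternation --- this should follow because at each switch the loop enters on a large branch of $\St(\tau)$ and exits on a small one (our orientation convention), so the local picture at the cut is consistent --- and that the perturbation to a $\tau$-transversal disjoint from $S$ in Figure~\ref{lazyriver3d} is genuinely global, i.e.\ the curve can be pushed off $S$ simultaneously along its whole length. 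A secondary point to verify carefully is that every facet of $\C_\phi$ is hit: this is where one uses that every extreme ray $u$ of $\cone(\sigma_{\LD})$ is Lefschetz dual to a genuine integral class on $\partial\cone(\sigma)$ (rationality of $B_x(M)$) and that the construction above produces a \emph{nonzero} minimal stable loop in $\ker u$, which needs Proposition~\ref{a cone is a cone} to guarantee $[\lambda]\ne 0$.
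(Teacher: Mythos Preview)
Your duality is backwards, and this creates a genuine gap. You argue that it suffices to find, for each \emph{extreme ray} $u$ of $\cone(\sigma_{\LD})$, a minimal stable loop in the \emph{facet} $\ker u\cap\C_\phi$. But ``$\C_{\min}\subseteq\C_\phi$ and $\C_{\min}$ touches every facet of $\C_\phi$'' does \emph{not} force $\C_{\min}=\C_\phi$. For a concrete counterexample, let $\C_\phi$ be the cone over a square and $\C_{\min}$ the cone over the inscribed square whose vertices are the midpoints of the edges: $\C_{\min}$ meets the relative interior of every facet yet is strictly smaller. What you actually need is that every \emph{extreme ray} of $\C_\phi$ contains the class of a minimal stable loop.

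The fix is simple and the rest of your machinery is exactly right. A $1$-dimensional face $\Phi$ of $\C_\phi=(\cone(\sigma_{\LD}))^\vee$ is $(F_{\LD})^*$ for a \emph{facet} $F$ of $\cone(\sigma)$, not an extreme ray. Take $\alpha$ primitive integral in the \emph{relative interior} of $F$; then $\{v\in\C_\phi:\alpha_{\LD}(v)=0\}=\Phi$ is $1$-dimensional. Now run your construction: $\mr\alpha$ is carried by $B_{\mr\tau}$, is not a fiber, flips to an unflippable $\mr S$, and $\St(\mr S)$ carries a stable loop $\lambda$ which can be isotoped off $S$, so $\alpha_{\LD}([\lambda])=0$ and hence $[\lambda]\in\Phi\setminus\{0\}$. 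This placement in a $1$-dimensional face also streamlines your decomposition step: writing $[\lambda]=[\lambda_1]+\cdots+[\lambda_n]$ with each $[\lambda_i]\in\C_\phi$, extremality of $\Phi$ forces every $[\lambda_i]$ to generate $\Phi$---you do not need to argue separately that each $\lambda_i$ remains carried by $\St(\mr S)$ and disjoint from $S$. (The paper uses embeddedness of $\mr S$ to guarantee $\lambda$ never traverses a switch of $\St(\tau)$ in both directions, which is what makes the cut-and-paste into minimal pieces work; your sketch of this step is on the right track but should invoke that.) Finally, note the degenerate case $\dim H_2(M,\partial M)=1$ must be handled separately, since then $\cone(\sigma)$ has no facets; here $\C_\phi$ is a single ray and any minimal stable loop generates it by Proposition~\ref{a cone is a cone}.
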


\begin{proof}
By Lemma \ref{looptodirection}, the cone generated by the minimal stable loops lies in $\C_\phi$.
Hence it suffices to show that every 1-dimensional face of $\C_\phi$ is generated by the homology class of a minimal stable loop.

Suppose first that $\dimension(H_2(M,\partial M))=1$. Then $H_1(M)=\R$, and $\C_\phi$ is a ray. Let $\lambda$ be a minimal stable loop of $\tau$. Since $[\lambda]\in \C_\phi\setminus\{0\}$ by Proposition \ref{a cone is a cone}, we have proved the claim in this case.

Now suppose that $\dimension(H_2(M,\partial M))>1$. Consider a 1-dimensional face $\Phi$ of $\C_\phi=(\cone(\sigma_{\LD}))^\vee$, which can be characterized as $\Phi=(F_{\LD})^*$ for some facet $F$ of $\cone(\sigma)$.

Let $\alpha\in H_2(M,\partial M)$ be a primitive integral class lying in the relative interior of $F$. By Corollary \ref{VTcarriesface}, Proposition \ref{flipping fibers}, and Proposition \ref{unflippabletolazy}, there exists an unflippable surface $\mr S$ representing $\mr \alpha$ and carried by $B_{\mr\tau}$ such that $\St(\mr S)$ carries a stable loop $\lambda$.

As in the proof of Proposition \ref{a cone is a cone}, we can cap off $\mr S$ to obtain a surface $S$ in $M$ representing $\alpha$ with $S\cap\mr M=\mr S$. Since $\lambda$ may be isotoped off of $S$, $\alpha_{\LD}\in \ker([\lambda])$, viewing $[\lambda]$ as a linear functional on $H^1(M)$. It follows that $[\lambda]\in \Phi\setminus\{0\}$, so $\Phi$ is generated by $[\lambda]$. Because $F$ has codimension 1, $(F_{\LD})^*$ has dimension 1 and is thus generated by $[\lambda]$.

Since $\mr S$ is embedded in $N(B_{\mr\tau}$, $\lambda$ never traverses a switch of $\St(\tau)$ in two directions. This is clear for switches of $\St(\tau)$ lying in the interiors of 2-cells, and for a switch lying on an edge of $\mr \tau$, such behavior would force $\mr S$ to be non-embedded. 

Any curve carried by a train track which traverses each switch in at most one direction, and also never traverses the same branch twice, must never traverse the same vertex twice. 
Therefore by cutting and pasting $\lambda$ along any edges of $St(\tau)$ carrying $\lambda$ with weight $>1$, we see $\lambda$ is homologous to a union of minimal stable loops $\lambda_1,\dots,\lambda_n$:
\[
[\lambda]=[\lambda_1]+\cdots+[\lambda_n].
\]
Since $[\lambda]$ lies in a one-dimensional face of $\C_\phi$, we conclude that for each $n$ there is some positive integer $n_i$ such that $\lambda=n_i[\lambda_i]$. It follows that $\Phi$ is generated by the homology class of a minimal stable loop.
\end{proof}

\begin{remark}
We could just as easily have defined \textbf{unstable loops} using a symmetrically defined \textbf{unstable train track} of $\tau$, and flipped surfaces downward in the arguments above to achieve the corresponding results.
\end{remark}

Combining Proposition \ref{a cone is a cone} and Theorem \ref{stable loop theorem} we have the following immediate corollary, which is not obvious from the definitions.

\begin{corollary}
The cone $\C_\tau$ is the smallest convex cone containing the homology classes of the stable loops of $\tau$.
\end{corollary}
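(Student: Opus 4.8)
The plan is a two-sided containment that sandwiches the cone generated by \emph{all} stable loops between two cones already known to equal $\C_\tau$. Write $\mathcal{K}_{\min}$ for the smallest convex cone containing the homology classes of the minimal stable loops of $\tau$, and $\mathcal{K}$ for the smallest convex cone containing the homology classes of all stable loops of $\tau$. The goal is to show $\mathcal{K}=\C_\tau$.

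First I would record the easy inclusion $\mathcal{K}_{\min}\subseteq\mathcal{K}$: every minimal stable loop is by definition a stable loop, so any nonnegative linear combination of classes of minimal stable loops is in particular such a combination of classes of stable loops. By Theorem \ref{stable loop theorem} we have $\C_\phi=\mathcal{K}_{\min}$, and by Proposition \ref{a cone is a cone} we have $\C_\phi=\C_\tau$; hence $\C_\tau=\mathcal{K}_{\min}\subseteq\mathcal{K}$.

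Next I would establish the reverse inclusion $\mathcal{K}\subseteq\C_\tau$. By Lemma \ref{looptodirection}, the homology class of every stable loop lies in $\C_\phi=\C_\tau$. Since $\C_\tau$ is a convex cone (indeed a rational convex polyhedral cone, being generated by the finitely many minimal stable loops), it contains every nonnegative linear combination of these classes, i.e.\ $\mathcal{K}\subseteq\C_\tau$. Combining the two inclusions gives $\C_\tau=\mathcal{K}$, which is precisely the assertion that $\C_\tau$ is the smallest convex cone containing the homology classes of the stable loops of $\tau$.

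There is essentially no substantive obstacle here: the statement is a formal repackaging of Theorem \ref{stable loop theorem} and Proposition \ref{a cone is a cone}, with Lemma \ref{looptodirection} supplying the only extra ingredient (that general, not merely minimal, stable loops still have classes in $\C_\phi$). The one point worth a sentence of care is that ``smallest convex cone'' and ``smallest closed convex cone'' coincide here, which follows because $\C_\tau=\C_\phi$ is already a rational convex polyhedral cone and is generated by finitely many minimal stable loops, so no closure is needed.
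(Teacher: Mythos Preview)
Your argument is correct and matches the paper's own treatment: the corollary is stated there as an immediate consequence of combining Proposition \ref{a cone is a cone} (which gives $\C_\phi=\C_\tau$) with Theorem \ref{stable loop theorem} (which gives $\C_\phi=\mathcal{K}_{\min}$), and your sandwich $\mathcal{K}_{\min}\subseteq\mathcal{K}\subseteq\C_\tau$ using Lemma \ref{looptodirection} for the second inclusion is exactly the right way to unpack ``immediate.'' Your closing remark about closures being unnecessary because the cone is polyhedral is a nice point of hygiene.
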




\begin{appendices}

\section{The suspension flow is canonical when our manifold has boundary}\label{Fried appendix}

The purpose of this appendix is to record a generalization of Fried's theory relating circular pseudo-Anosov flows on closed 3-manifolds to fibered faces that does not currently exist in the literature. The generalization here is to the case of circular pseudo-Anosov flows on compact 3-manifolds, possibly with boundary.

\begin{appthm}
Let $\phi$ be a circular pseudo-Anosov flow on a compact 3-manifold $M$ with cross section $Y$. Let $\sigma$ be the fibered face of $B_x(M)$ such that $[Y]\in\cone(\sigma)$. Let $\alpha\in H_2(M,\partial M)$ be an integral class. The following are equivalent:
\begin{enumerate}
\item $\alpha$ lies in $\intr(\cone(\sigma))$
\item the Lefschetz dual of $\alpha$ is positive on the homology directions of $\phi$
\item $\alpha$ is represented by a cross section to $\phi$.
\end{enumerate}
Moreover, $\phi$ is the unique circular pseudo-Anosov flow admitting cross sections representing classes in $\cone(\sigma)$ up to reparameterization and conjugation by homeomorphisms of $M$ isotopic to the identity.
\end{appthm}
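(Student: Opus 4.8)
The plan is to prove the equivalence $(1)\Leftrightarrow(2)\Leftrightarrow(3)$ first, and then deduce uniqueness. The implication $(3)\Rightarrow(1)$ is essentially standard: a cross section $Z$ to $\phi$ is a fiber of a fibration of $M$ over $S^1$, hence $[Z]$ lies in the interior of $\cone(\sigma')$ for some fibered face $\sigma'$, and since $\sigma$ and $\sigma'$ both meet the cone containing cross-section classes, a continuity/connectedness argument (or the agreement of $x$ with $-e_\phi$ on such classes, as developed in Section~\ref{norm background}) forces $\sigma'=\sigma$. For $(1)\Rightarrow(3)$ I would follow Fried's original strategy: given an integral $\alpha\in\intr(\cone(\sigma))$, its Lefschetz dual $u\in H^1(M;\R)$ is represented by a closed $1$-form, and one wants a closed $1$-form in the class $u$ that is strictly positive on $T\phi$; integrating such a form gives a map $M\to S^1$ whose fibers are transverse to $\phi$. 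The existence of a positive closed form representing $u$ is exactly the statement that $u$ is positive on every asymptotic cycle of $\phi$, i.e. positive on $\C_\phi$ — which is the content of $(2)$. So the real work is $(1)\Leftrightarrow(2)$.

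For $(1)\Leftrightarrow(2)$ I would argue that $\cone(\sigma)$ and $\C_\phi^\vee$ coincide as cones in $H_2(M,\partial M;\R)$, so that their interiors agree. One inclusion: any $\alpha\in\cone(\sigma)$ is a nonnegative combination of fiber classes, each fiber meets every closed orbit nonnegatively (in fact a cross section meets every orbit positively), so $u=\alpha_{\LD}$ is nonnegative on all homology directions; strict positivity on $\intr(\cone(\sigma))$ follows because an interior class is represented by a cross section, which hits every orbit. For the reverse inclusion I would use that $-e_\phi$ and $x$ agree precisely on $\cone(\sigma)$ (Section~\ref{norm background}) together with the variational/duality characterization of the Thurston norm: if $u$ is positive on $\C_\phi$ then $\langle -e_\phi,\alpha\rangle = x(\alpha)$, which pins $\alpha$ inside $\cone(\sigma)$. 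The key technical input, which I would import from Fried's work (adapted to the bounded case), is that $\C_\phi$ is a closed cone finitely generated by homology classes of closed orbits — this is stated in the excerpt as Lemma~\ref{it's a nice cone} and lets one treat $\C_\phi$ as a rational polyhedral cone and apply the duality $C^{\vee\vee}=C$. The boundary case requires checking that orbits on $\partial M$ ($\partial$-singular orbits) cause no trouble: since $\phi$ is tangent to $\partial M$ and each boundary component is a torus, these orbits contribute classes already captured by the finite generating set, and a cross section may be isotoped to meet $\partial M$ transversely in its boundary.

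For uniqueness: suppose $\psi$ is another circular pseudo-Anosov flow on $M$ with a cross section $Z$ such that $[Z]\in\cone(\sigma)$. First return maps of cross sections of $\psi$ are pseudo-Anosov, so $\psi$ is the suspension flow of some pseudo-Anosov monodromy $h\colon Z\to Z$. Since $[Z]\in\cone(\sigma)$ and $[Y]\in\cone(\sigma)$, both $h$ and $g$ (the monodromy giving $\phi$) are monodromies of fibrations corresponding to the same fibered face $\sigma$; by the uniqueness of the pseudo-Anosov representative in a mapping class and the fact that all such fibrations of a fixed face are related by the flow, $h$ and a power/iterate structure of $g$ determine the same flow up to the allowed equivalences. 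Concretely I would invoke Thurston's uniqueness of pseudo-Anosov maps together with the already-proven statement that every class in $\intr(\cone(\sigma))$ is represented by a cross section of $\phi$: this shows $\psi$ and $\phi$ have a common cross section with isotopic first-return maps, hence are conjugate by a homeomorphism isotopic to the identity, after reparameterization. The main obstacle I anticipate is the boundary case throughout: ensuring that the closed-form/transversality constructions and the finite generation of $\C_\phi$ go through when fibers and orbits touch $\partial M$, since Fried only wrote the closed case — this is exactly why the appendix exists, and the bulk of the detailed argument will be in handling the $\partial$-singular orbits and the collar of $\partial M$ carefully.
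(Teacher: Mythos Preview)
Your treatment of $(2)\Leftrightarrow(3)$ (Fried's cross section theorem) and of $(3)\Rightarrow(1)$ (via the Euler class computation $x(\alpha)=-\chi(Z)=\langle -e_\phi,\alpha\rangle$) is correct and matches the paper. The gap is in $(1)\Rightarrow(2)$. You write that ``any $\alpha\in\cone(\sigma)$ is a nonnegative combination of fiber classes, each fiber meets every closed orbit nonnegatively,'' and that ``strict positivity on $\intr(\cone(\sigma))$ follows because an interior class is represented by a cross section, which hits every orbit.'' But a fiber $Z$ with $[Z]\in\intr(\cone(\sigma))$ is only known a priori to be a cross section to \emph{its own} suspension flow, not to $\phi$; the assertion that $Z$ meets every closed orbit of $\phi$ positively is precisely $(1)\Rightarrow(3)$, which you have already reduced to $(1)\Rightarrow(2)$. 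So this step is circular: you are assuming that every interior class of $\cone(\sigma)$ is represented by a cross section to $\phi$ in order to prove it.

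The paper closes this gap by reorganizing the logic. It first proves (Proposition~\ref{disjoint or equal}) that for any two circular pseudo-Anosov flows $F,F'$ on $M$, the open cones $\C_\R(F)$ and $\C_\R(F')$ are either disjoint or equal. This is exactly the conjugacy argument you outline for uniqueness: a common integral class gives homologous cross sections, their pseudo-Anosov first return maps are strictly conjugate, one builds an explicit conjugating homeomorphism and shows via Waldhausen that it is isotopic to the identity (Lemma~\ref{crisis lemma} handles the $\partial$-singular orbits, which is where the boundary case needs genuine care). With this in hand, $(1)\Rightarrow(2)$ is proved by contradiction in Proposition~\ref{flows determine faces}: if $\C_\R(\phi)\subsetneq\intr(\cone(\sigma_{\LD}))$, the rationality of $\C_\phi$ (Lemma~\ref{it's a nice cone}) produces an integral class $v\in\partial\C_\R(\phi)\cap\intr(\cone(\sigma_{\LD}))$; its Lefschetz dual is a fiber, hence a cross section to some \emph{other} circular pseudo-Anosov flow $\phi'$, so $\C_\R(\phi)$ and $\C_\R(\phi')$ intersect without being equal, contradicting ``disjoint or equal.'' In short, you have the right ingredients, but the conjugacy argument must be deployed \emph{before} the equivalence, as the engine that drives $(1)\Rightarrow(2)$, rather than only afterwards for uniqueness.
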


Homology directions are essentially projectivized homology classes of nearly-closed orbits of $\phi$. Their precise definition is given below, in Section \ref{Fried background}.

Some experts may have verified for themselves that this generalization works. However, the results in this paper depend on it so we include this proof. Our proof attempts to follow the arguments of Fried, making modifications when necessary to deal with boundary components.

\subsection{Homology directions and cross sections}\label{Fried background}

We begin by recalling some definitions and a result from \cite{Fri82}. Let $M$ be a compact smooth manifold, and let $D_M$ be the quotient of $H_1(M)$ by positive scalar multiplication, endowed with the topology of the disjoint union of a sphere and an isolated point corresponding to 0. We denote the quotient map $H_1(M)\to D_M$ by $\pi$. Let $\phi$ be a $C^1$ flow on $M$ which is tangent to $\partial M$. Let $\phi_t(a)$ denote the image of $a\in M$ under the time $t$ map of $\phi$.

A \textbf{closing sequence based at $m\in M$} is a sequence of points $(m_k,t_k)\in M\times \R$ with $m_k\to m\in M$, $\phi_{t_k}(m_k)\to m$, and $t_k\nrightarrow 0$. For sufficiently large $k$, the points $m_k$ and $\phi_{t_k}(m_k)$ lie in a small ball $B$ around $M$. We can define a closed curve $\gamma_k$ based at $m$ by traveling along a short path in $B$ from $m$ to $m_k$, flowing to $\phi_{t_k}(m_k)$, and returning to $m$ by a short path in $B$. The $\gamma_k$ are well-defined up to isotopy.

Since $D_M$ is compact, $\pi([\gamma_k])$ must have accumulation points. Any such accumulation point $\delta$ is called a \textbf{homology direction for $\phi$}. We call $(m_k,t_k)$ a \textbf{closing sequence for $\delta$} if $\pi([\gamma_k])\to\delta$. The set of homology directions for $\phi$ is denoted $D_\phi$. Let $\delta\in D_M$ and $\alpha\in H^1(M)$. While $\alpha(\delta)$ is not well-defined unless we choose a norm on $H_1(M)$ and identify $D_M$ with the vectors of length 0 and 1, we can say whether $\alpha$ is positive, negative, or zero on $\delta$.

We define $\C_\phi$, the \textbf{cone of homology directions} of $\phi$, by $\C_\phi:=\pi^{-1}(D_\phi\cup\{0\})$.

Fried gives a useful criterion for when an integral cohomology class in $H^1(M)$ is \textbf{compatible with a cross section}, i.e. has a cross section of $\phi$ representing its Lefschetz dual.

\begin{theorem}[Fried]\label{cross section theorem}
Let $\alpha\in H^1(M)$ be an integral class. Then $\alpha$ is compatible with a cross section to $\phi$ if and only if $\alpha(\delta)>0$ for all $\delta\in D_\phi$.
\end{theorem}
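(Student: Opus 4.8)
The plan is to prove the two implications separately: the forward one is a short flux computation, and the converse — the substantial direction — I will reduce to constructing a closed $1$-form that is positive on the flow.

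\textbf{Forward direction.} Suppose $\alpha$ is compatible with a cross section, so there is a fibration $f\colon M\to\R/\Z$ transverse to $\phi$ with $f^{-1}(\mathrm{pt})$ Lefschetz dual to $\alpha$ and $f^*(d\theta)$ representing $\alpha$. Let $X$ be the vector field generating $\phi$; transversality of the fibers to $\phi$ together with compactness of $M$ gives $\epsilon:=\min_M(X\cdot f)>0$. For a closing sequence $(m_k,t_k)$ for some $\delta\in D_\phi$, the times $t_k$ are positive and, along the relevant subsequence, bounded below by a fixed $\tau_0>0$. The loop $\gamma_k$ consists of a flow arc of duration $t_k$ together with two closing-up arcs whose $f$-variation tends to $0$ as $m_k,\phi_{t_k}(m_k)\to m$; hence $\alpha([\gamma_k])=\int_{\gamma_k}f^*d\theta\ge \epsilon t_k-o(1)>0$ for large $k$, while $|[\gamma_k]|$ grows at most linearly in $t_k$. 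Passing to the subsequence with $\pi([\gamma_k])\to\delta$ shows $\alpha(\delta)>0$.

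\textbf{Reduction of the converse.} Suppose now $\alpha(\delta)>0$ for all $\delta\in D_\phi$; equivalently $\alpha$ is strictly positive on the closed convex cone generated by $D_\phi$ minus the origin. It suffices to produce a closed $1$-form $\omega$ with $[\omega]=\alpha$ and $\omega(X)>0$ everywhere on $M$. Indeed, normalizing the periods of $\omega$ to be integral (possible since $\alpha$ is integral) and integrating gives $F:=\int\omega\colon M\to\R/\Z$; since $\omega(X)>0$ and $X$ is tangent to $\partial M$, both $F$ and $F|_{\partial M}$ are submersions, so by Ehresmann's theorem for manifolds with boundary $F$ is a locally trivial fibration, its fibers are transverse to $\phi$, a fiber is Lefschetz dual to $\alpha$, and passing to the connected covering of the base if necessary we may take the fiber connected. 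So $F^{-1}(\mathrm{pt})$ is a cross section compatible with $\alpha$, and everything reduces to constructing $\omega$.

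\textbf{Constructing $\omega$.} The mechanism is flow-averaging: starting from any closed $1$-form $\omega_0$ in class $\alpha$, the forms $\omega_T:=\tfrac1T\int_0^T\phi_s^*\omega_0\,ds$ are closed and cohomologous to $\omega_0$ (because $\phi_s\simeq\mathrm{id}$), with $\omega_T(X)(p)=\tfrac1T\int_0^T\omega_0(X)(\phi_s p)\,ds$. A weak limit $\omega_\infty$ (a de Rham $1$-current) is closed, represents $\alpha$, and satisfies $\omega_\infty(X)\ge 0$: at a point generic for an invariant probability measure $\nu$ the value is $\alpha(A_\nu)$, where $A_\nu\in H_1(M)$ is the Schwartzman asymptotic cycle of $\nu$, and every nonzero $A_\nu$ lies in the closed cone generated by $D_\phi$ — for $\nu$-a.e.\ $x$ the normalized homology classes of the orbit arcs $\{\phi_s(x):0\le s\le T_k\}$, closed up along short return paths (which exist for a sequence $T_k\to\infty$ by Poincar\'e recurrence), converge to $A_\nu$, and these closed-up loops are exactly the loops of a closing sequence, so $A_\nu/|A_\nu|$ is a homology direction on which $\alpha$ is positive by hypothesis. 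Thus $\omega_\infty(X)$ vanishes only on a flow-invariant set $Y$ carrying no invariant measure of nonzero asymptotic cycle. A Hahn--Banach separation in the space of $1$-currents — Sullivan's foliation-cycle criterion applied to the line field $X$ — then provides an exact form $dg$ with $X\cdot g\ge0$ and $X\cdot g>0$ on $Y$; after a final mollification compatible with positivity, $\omega:=\omega_\infty+dg$ is a smooth closed form with $[\omega]=\alpha$ and $\omega(X)>0$ everywhere.

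\textbf{Main obstacle.} The crux is the last step: converting the recurrence hypothesis ``$\alpha>0$ on $D_\phi$'' into the existence of a global closed $1$-form positive on $X$. This is precisely a Hahn--Banach separation of the closed convex cone of $\phi$-directed de Rham $1$-currents from the affine subspace of closed forms representing $\alpha$, where the work is to verify that the ``structure cycles'' obstructing separation are genuine homology directions (via the closing-up argument above), so that the hypothesis rules them out. The remaining points are bookkeeping: weak-$*$ compactness in the averaging, invariance of the cohomology class under $\phi_s\simeq\mathrm{id}$, pushing recurrence times to infinity, and the final smoothing — none of which disturbs the cohomology class or the sign of $\omega(X)$.
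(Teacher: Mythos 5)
Your forward implication and your reduction of the converse to producing a closed $1$-form $\omega$ with $[\omega]=\alpha$ and $\omega(X)>0$ are fine (this is the Schwartzman-style route; note the paper itself offers no proof of this statement, citing Fried). The gap is in the final step of the converse, and it is not bookkeeping --- it is the actual content of the theorem. Your averaging argument only yields $\omega_\infty(X)\ge 0$, with strict positivity at points generic for invariant measures $\nu$ whose asymptotic cycle $A_\nu$ is \emph{nonzero}. The hypothesis ``$\alpha(\delta)>0$ for all $\delta\in D_\phi$'' does not, by anything in your argument, exclude an invariant probability measure $\nu$ with $A_\nu=0$: for such a $\nu$ the closed-up return loops $\gamma_k$ satisfy $[\gamma_k]/T_k\to 0$, yet their projective directions can accumulate only at directions on which $\alpha$ is positive (by integrality one even has $\alpha([\gamma_k])\ge 1$), so no contradiction with the hypothesis is produced. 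Proving that positivity on homology directions forces $\alpha(A_\nu)>0$ for \emph{every} invariant probability measure is exactly where the real work lies (integrality of $\alpha$, compactness of $D_\phi$, and a finer analysis of long nearly closed orbit segments must enter), and your proposal asserts it away.

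Moreover, the proposed repair cannot work. If the vanishing locus $Y$ is nonempty, it is a nonempty compact invariant set and hence carries an invariant probability measure $\nu$ (Krylov--Bogolyubov); invariance gives $\int_M (X\cdot g)\,d\nu=0$ for every smooth $g$, so there is no $g$ with $X\cdot g\ge 0$ on $M$ and $X\cdot g>0$ on $Y$. Indeed, if some invariant $\nu$ has $A_\nu=0$, then $\int \omega(X)\,d\nu$, being the pairing of $[\omega]$ with $A_\nu$, vanishes for \emph{every} closed $1$-form $\omega$, so no correction by exact (or even closed) forms can make $\omega(X)$ positive along the support of $\nu$. Your appeal to ``Sullivan's foliation-cycle criterion'' conflates homologically nontrivial foliation cycles with nontrivial ones: the Hahn--Banach separation produces a closed form positive on $X$ precisely when every invariant measure pairs strictly positively with the prescribed class, which is the statement you still need. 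As written, the converse is incomplete.
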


We relate a couple of useful observations of Fried. The first is that if $(m_k,t_k)$ is a closing sequence for $\delta\in D_\phi$ based at $m$ and $t_k$ has a bounded subsequence, then $m$ lies on a periodic orbit $o_m$ and $\delta=\pi([o_m])$. Thus $(m_k=m,t_k=kp)$ where $p$ is the period of $m$ is also a closing sequence for $\delta$. The second observation is that if $\phi$ admits a cross section $Y$, then each $\delta\in D_\phi$ admits a closing sequence $(m_k,t_k)$ with $m,m_k,\phi_{t_l}(m_k)\in Y$. This can be seen by flowing each point in a closing sequence for $\delta$ until it meets $Y$ for the first time. We record these observations in a lemma.

\begin{lemma}[Fried]\label{infinitet}
Suppose $\phi$ admits a cross section $Y$, and let $\delta\in D_\phi$. Then $\delta$ admits a closing sequence $(m_k,t_k)$ based at $m$ with $m,m_k,\phi_{t_k}(m_k)\in Y$ and $t_k\to \infty$.
\end{lemma}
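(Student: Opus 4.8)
\textbf{Proof proposal for Lemma \ref{infinitet}.}

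The plan is to assemble the statement from the two observations of Fried quoted just before the lemma, handling the two cases for the behavior of the time parameters separately. So fix $\delta\in D_\phi$ and start with an arbitrary closing sequence $(a_k,s_k)$ based at some point $a$ realizing $\delta$, so that $a_k\to a$, $\phi_{s_k}(a_k)\to a$, $s_k\nrightarrow 0$, and $\pi([\gamma_k])\to\delta$. By passing to a subsequence we may assume that $|s_k|\to\infty$ or that $(s_k)$ is bounded; and since $s_k\nrightarrow 0$, in the bounded case we may further pass to a subsequence with $s_k\to s$ for some $s\neq 0$.

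First I would dispose of the bounded case. Here Fried's first observation applies: the limit point $a$ lies on a periodic orbit $o_a$ of period some $p>0$, and $\delta=\pi([o_a])$; moreover $(a,kp)$ is itself a closing sequence for $\delta$. Now flow $a$ forward until it first meets the cross section $Y$ — call the resulting point $m\in Y\cap o_a$ — and let $q>0$ be the period of $m$ (equal to $p$). Then $(m_k,t_k):=(m,kq)$ is a closing sequence for $\delta$ based at $m$, with $m_k=m\in Y$, $\phi_{t_k}(m_k)=m\in Y$, and $t_k=kq\to\infty$. This handles the first case.

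Next the unbounded case, where we may assume $s_k\to+\infty$ (if $s_k\to-\infty$, note that a closing sequence for $\delta$ based at $a$ with $s_k\to-\infty$ is incompatible with $\delta$ being a homology direction in the positive sense coming from a cross section — or more simply, replace the segments by their $Y$-first-return reparametrization below, which automatically produces positive times). Following Fried's second observation, for each $k$ let $m_k$ be the first point on the forward $\phi$-orbit of $a_k$ lying in $Y$, and let $m$ be the first point on the forward orbit of $a$ lying in $Y$; since $Y$ is a cross section the first-return function is continuous, so $m_k\to m$. Likewise replace $\phi_{s_k}(a_k)$ by its first forward hit on $Y$; because $\phi_{s_k}(a_k)\to a$ and $a$ hits $Y$ after finite time, these hits converge to $m$ as well. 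Let $t_k$ be the elapsed time along this modified segment from $m_k$ to its terminal point on $Y$. The modified segments differ from the original $\gamma_k$ only by the two short flow-arcs at the ends (each of bounded length, converging to the fixed arcs joining $a$ to $m$), so the corresponding closed curves are isotopic for large $k$ to $\gamma_k$ up to a fixed loop, hence still satisfy $\pi([\gamma_k'])\to\delta$; thus $(m_k,t_k)$ is a closing sequence for $\delta$ based at $m$ with all three points in $Y$. Finally $t_k$ is within a bounded error of $s_k$, and since the bounded case has already been excluded we may assume $s_k\to\infty$, whence $t_k\to\infty$ as well (if not, we would be back in the bounded case, contradiction). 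This gives the desired closing sequence.

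The only real subtlety — the step I expect to be the main obstacle to writing cleanly — is the bookkeeping in the unbounded case showing that replacing the raw segments by their $Y$-to-$Y$ first-return reparametrizations does not destroy the property $\pi([\gamma_k])\to\delta$: one must check that the correction loops one picks up are eventually a fixed isotopy class (uniform in $k$), so that they contribute a bounded term in $H_1(M)$ that washes out under $\pi$ as $[\gamma_k]$ has unbounded norm. This is where the continuity of the first-return map to $Y$ and the convergence $a_k\to a$, $\phi_{s_k}(a_k)\to a$ are used in an essential way, and it is exactly the content of Fried's second observation, so I would cite that rather than re-derive it.
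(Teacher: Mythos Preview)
Your proposal is correct and follows exactly the approach the paper indicates. In fact the paper gives no formal proof of this lemma at all: it simply states Fried's two observations (the periodic-orbit dichotomy when $t_k$ has a bounded subsequence, and the flow-forward-to-$Y$ trick) in the paragraph preceding the statement and then ``records these observations in a lemma.'' Your argument is precisely the assembly of those two observations into a proof, with the case split and the bookkeeping made explicit; there is nothing to compare.
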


\subsection{Proving Theorem \ref{generalized Fried}}

Throughout this section, our notation mimics that in Section \ref{canonical veering section}. We consider a compact hyperbolic 3-manifold $M$  and  a circular pseudo-Anosov flow $\phi$ on $M$ admitting a cross section $Y$ with first return map $g$. We assume $\phi$ is parameterized so that for all $z\in Z$ we have $\phi_1(z)=g(z)$.

As in \cite{FLP79}, we can find a Markov partition $\M$ for $g\colon Y\to Y$, where the definition of Markov partition is altered slightly to account for the fact that $Y$ may have boundary (Markov ``rectangles" touching $\partial Y$ are actually pentagons). We will call the elements of $\M$  \textbf{shapes}, and the elements of $\M$ which touch $\partial Y$ \textbf{pentagons}. By the construction in \cite{FLP79} we can assume that an edge of a pentagon meeting $\partial Y$ in a single point is contained in a stable or unstable prong of the stable or unstable foliation of $g$. Each pentagon has a single edge entirely contained in $\partial Y$ called a $\partial$-edge. 
Let $G$ be the directed graph associated to $\M$, whose vertices are labeled by the elements of $\M$ and whose edges are $(r_i, r_j)\in \M\times \M$ where $g$ stretches $r_i$ over $r_j$. By a \textbf{cycle} or \textbf{path} in $G$ we mean a directed cycle or directed path, respectively. As in the case of closed surfaces, $G$ is a strongly connected directed graph, meaning that for any vertices $r_i,r_j$ of $G$, there exists a path from $r_i$ to $r_j$.

%

Let $\OO_\phi$ be the set of closed orbits of $\phi$, and let $L_G$ be the collection of cycles in $G$. Let $p_1\in \M$ be a pentagon with $\partial$-edge $e$. The image of $e$ under $g^i$ is a $\partial$-edge of some pentagon $p_i$, and there exists some finite $n$ for which $(p_1,\dots,p_n)$ is a cycle. Let $\partial L_G\subset L_G$ be the collection of all such cycles.

Let $\partial L_G$ be the collection of cycles in $L_G$ which traverse only vertices labeled by pentagons. A cycle in $\partial L_G$ is determined by any one of its pentagons. 

We now define a surjection $\omega\colon L_G\to \OO_\phi$. Let $\ell\in \partial L_G$, and pick one of its vertices labeled by a pentagon $p\in \M$. Let $e=p\cap \partial Y$. If we give $\partial Y$ the orientation induced by an outward-pointing vector field, we induce an orientation on $e$. Let $e_+$ be the positive endpoint of $e$ with respect to this orientation. Since the other edge of $p$ containing $e_+$ lies in a prong of the stable or unstable foliation of $g$, the orbit $o(e_+)$ of $e_+$ under $\phi$ is a $\partial$-singular orbit. Let $\omega(\ell)=o(e_+)$; this is well-defined, i.e. it does not depend on the initial choice of $p$. Next, any cycle $\ell\in L_G\setminus \partial L_G$ determines a unique closed orbit of $\phi$, just as in the case when $Y$ is a closed surface. We let $\omega(\ell)$ be this closed orbit. 

We say a path $(r_1,\dots, r_n)$ in $G$ is \textbf{simple} if $r_i\ne r_j$ for $1\le i,j\le n$, $i\ne j$. Similarly a cycle $(r_1,\dots, r_n)$, $r_1=r_n$ in $G$ is \textbf{simple} if $r_i\ne r_j$ for $2\le i,j\le n-1$, $i\ne j$. Let $S_G\subset L_G$ be the set of simple cycles in $G$. Since $\M$ is finite, $S_G$ is finite.

We now define a finite set $B\subset H_1(M)$. Let $B_1=\{[\omega(s)]\in H_1(M)\mid s\in S_G\}$. For every simple path $p=(r_1,\dots,r_n)\in G$ which is \textbf{almost closed}, i.e. starts and ends on vertices labeled by shapes in $\M$ meeting along an edge or vertex, let $o(p)$ be an orbit segment starting in $r_1$, passing sequentially through the $r_i$'s, and ending in $r_n$. Let $\epsilon(p)$ be a segment in $Y$ connecting the endpoints of $o(p)$ and supported in $r_1\cup r_2$. Let $B_2=\{[o(p)*\epsilon(p)]\mid \text{$p$ is an almost closed simple path in $G$}\}$, where $*$ is concatenation of paths. Let $B=B_1\cup B_2$. The salient feature of $B$ that will be used below is that it is finite, and hence bounded.

\begin{lemma}\label{it's a nice cone}
The cone $C_\phi$ of homology directions of $\phi$ is a finite-sided rational convex polyhedral cone.
\end{lemma}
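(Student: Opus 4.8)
The plan is to show that $\C_\phi$ equals the convex cone generated by the finite set $\pi(B) \subset D_M \cup \{0\}$ introduced above, which immediately gives that $\C_\phi$ is a finite-sided rational convex polyhedral cone (its generators $\pi(B_1 \cup B_2)$ are classes of closed orbits or concatenations of orbit segments with short closing paths, hence integral, hence rational). Concretely, I would prove two inclusions: first, that every element of $B$ represents either a homology direction of $\phi$ or $0$ (so $\cone(B) \subseteq \C_\phi$), and second, that $\C_\phi \subseteq \cone(B)$, i.e. every homology direction $\delta \in D_\phi$ lies in the projectivization of $\cone(B)$.

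For the first inclusion, the elements of $B_1$ are classes of genuine closed orbits $\omega(s)$ for simple cycles $s$, which are homology directions by the first observation of Fried recorded in Lemma \ref{infinitet} (a periodic orbit $o_m$ gives a closing sequence $(m, kp)$ with unbounded times). The elements of $B_2$ come from almost-closed simple paths; here I would argue that by iterating such a path (concatenating the orbit segment with itself and reclosing) one produces a closing sequence whose homology classes projectively accumulate onto $\pi([o(p) * \epsilon(p)])$ — the bounded correction term $\epsilon(p)$ contributes a vanishing fraction in the limit — so this class lies in $\C_\phi$ as well.

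For the reverse inclusion, I would use Lemma \ref{infinitet}: since $\phi$ admits the cross section $Y$, every $\delta \in D_\phi$ has a closing sequence $(m_k, t_k)$ with $m, m_k, \phi_{t_k}(m_k) \in Y$ and $t_k \to \infty$. Passing to the first-return map $g$, the orbit segment from $m_k$ of flow-time $t_k$ decomposes into a long sequence of hops through Markov shapes in $\M$; recording which shape each successive return point lies in gives a long path in $G$. The key combinatorial point is that a long path in the finite graph $G$ decomposes as a concatenation of simple cycles together with one bounded simple path at each end, so the homology class $[\gamma_k]$ is a nonnegative integer combination of elements of $B_1$ plus a uniformly bounded error coming from elements of $B_2$-type corrections and the short closing path in the ball $B$. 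Dividing by $t_k \to \infty$ and projectivizing, the bounded error washes out and $\delta$ is exhibited as a limit of projectivized nonnegative combinations of $\pi(B_1)$, hence lies in $\pi(\cone(B))$. Since $\cone(B)$ is closed, $\delta \in \pi(\cone(B))$.

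The main obstacle I expect is the careful bookkeeping in the reverse inclusion: one must make the decomposition of a long path in $G$ into simple cycles precise, control the finitely many possible "defect" contributions (including the closing path inside the ball $B$ and the discrepancy between an abstract orbit segment through a prescribed sequence of shapes and the actual closing curve $\gamma_k$) uniformly in $k$, and handle the boundary/pentagon case where closed orbits may be $\partial$-singular — this is exactly why $B$ was built to include the cycles in $\partial L_G$ and why $\omega$ was defined on all of $L_G$ rather than just on $L_G \setminus \partial L_G$. Once the boundedness of all defects is established, the limiting argument is routine, and the rationality of the vertices follows because every generator in $B$ is manifestly an integral homology class.
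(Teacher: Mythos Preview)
Your proposal has a genuine gap: the implication ``every element of $B$ is a homology direction, so $\cone(B) \subseteq \C_\phi$'' is invalid. By definition $\C_\phi = \pi^{-1}(D_\phi \cup \{0\})$ is a union of rays through the origin --- closed under positive scaling --- but it is \emph{not a priori convex}. Knowing that the finitely many generators lie in $\C_\phi$ does not place their nonnegative combinations there. The paper handles this with a separate argument: given closed orbits $\lambda_1, \lambda_2$ passing through shapes $r_1, r_2$, one concatenates large powers of the corresponding cycles in $G$ with short connecting paths $\nu_{1,2}, \nu_{2,1}$ to produce genuine closed orbits $\gamma_n$ with $[\gamma_n] = n[\lambda_1]+n[\lambda_2]+[\omega(\nu_{1,2}*\nu_{2,1})]$, whose projectivized classes converge to $\pi([\lambda_1]+[\lambda_2])$; hence $[\lambda_1]+[\lambda_2] \in \C_\phi$. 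Without this step the inclusion $\cone(B_1) \subseteq \C_\phi$, and with it the equality $\C_\phi = \cone(B_1)$, cannot be closed up.

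A secondary issue: your treatment of $B_2$ is both unnecessary and unjustified. The orbit segment $o(p)$ starts in $r_1$ and ends in $r_n \ne r_1$, so ``concatenating $o(p)$ with itself'' does not produce a flow segment; the closing arc $\epsilon(p)$ lies in $Y$, not along an orbit, so iterating $o(p)*\epsilon(p)$ does not yield a closing sequence in the required sense. In the paper the set $B$ (and in particular $B_2$) serves only as a uniformly \emph{bounded} set absorbing the remainder after extracting the longest cycle from a long path in $G$; it is never claimed that $B_2 \subset \C_\phi$. Your own wash-out argument for the reverse inclusion in fact already gives $\C_\phi \subseteq \cone(B_1)$, so once convexity is proved the $B_2$ generators are irrelevant.

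In short, the paper proceeds in three steps: (i) every homology direction is projectively approximated by closed-orbit classes (using boundedness of $B$), (ii) $\C_\phi$ is convex (via the concatenation argument), and (iii) every closed-orbit class is a nonnegative integer sum of classes in $S_\phi = B_1$. Your plan covers (i) and (iii) in spirit but omits (ii), which is where the real content lies.
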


\begin{proof}
First, we claim that $\C_\phi$ is generated by the set of homology classes of orbits of $\phi$, $\{[o]\mid o\in\OO_\phi\}$.

Let $\delta\in D_\phi$. By Lemma \ref{infinitet}, $\delta$ admits a closing sequence $(m_k,t_k)$ based at $m\in Y$ with $m_k\in Y$, $t_k\in \Z_+$ for all $k$, and $t_k\to\infty$. 
%
Consider the curves $\gamma_k$, which for sufficiently large $k$ we can express as 
\[
\gamma_k=o_k*\epsilon_k,
\]
where $o_k$ is the curve $\phi_t(m_k)$, $0\le t\le t_k$ and $\epsilon_k$ is a short curve in $Y$ from $\phi_{t_k}(m_k)$ to $m_k$ supported in the union of at most 2 shapes of $\M$. We can lift $o_k$ to a path $r(o_k)=(r_1,\dots,r_{n(k)})$ in $G$. For sufficiently large $k$, $r(o_k)$ is not simple. Let $\ell(o_k)=(r_a,r_{a+1},\dots, r_{b-1},r_b)$ be the longest cycle subpath of $r(o_k)$. Then $(r_1,\dots,r_{a-1},r_a,r_{b+1},\dots,r_{n(k)})$ is simple and corresponds to some orbit segment $q_k$ with endpoints in rectangles which are either equal or intersect along an edge or vertex. Let $s_k$ be a segment connecting the endpoints of $q(k)$ supported in $r_1\cup r_{n(k)}$. We have
\[
[\gamma_k]=[q_k*s_k]+[\omega(\ell_k)],
\]
and $[q_k*s_k]\in B$. Letting $k\to\infty$, the intersection of $[\gamma_k]$ with $[Y]$ approaches infinity so $\{[\gamma_k]\}$ is unbounded. Since $B$ is bounded, we conclude
\[
\lim_{k\to\infty}\pi([\gamma_k])=\lim_{k\to\infty}\pi(\omega(\ell_k)),
\]
so $\delta$ is projectively approximated by homology classes of closed orbits. Since the homology class of each closed orbit lies in $\C_\phi$, we have that $\C_\phi$ is the smallest closed cone containing the homology classes of closed orbits of $\phi$ as claimed.

Next we will show that $\C_\phi$ is convex. Let $\lambda_1,\lambda_2$ be two closed orbits of $\phi$ that respectively pass through rectangles $r_1$ and $r_2$. Let $\ell_i$, $i=1,2$ be cycles in $G$ such that $\omega(\ell_i)=\lambda_i$. Let $\nu_{1,2}$ (resp. $\nu_{2,1}$) be a path in $G$ from $r_1$ to $r_2$ (resp. $r_2$ to $r_1$). Letting
\[
\gamma_n=\omega((\ell_1)^n * \nu_{1,2} * (\ell_2)^n * \nu_{2,1}),
\]
we have
\[
[\gamma_n]=n[\lambda_1]+n[\lambda_2]+[\omega(\nu_{1,2}* \nu_{2,1})].
\]
Therefore
\[
\pi([\lambda_1]+[\lambda_2])=\lim_{n\to\infty}\pi([\gamma_n])\in D_\phi,
\]
so $[\lambda_1]+[\lambda_2]\in \C_\phi$ and $\C_\phi$ is convex.

It remains to show that $\C_\phi$ is finite-sided and rational. To do this, it suffices to show that $\C_\phi$ is the convex cone generated by $S_\phi=\{[\omega(\ell)]\mid \ell\in S_G\}$. It is clear that $S_\phi\subset \C_\phi$. On the other hand, let $o\in \OO_\phi$. There exists some cycle $\ell$ in $G$ such that $\omega(\ell)=o$, and $\ell$ is a concatenation of simple cycles $\ell=s_1*\cdots*s_n$. By cutting and pasting, we see
\[
[o]=[\omega(s_1)]+\cdots+[\omega(s_n)].
\]
Hence $\OO_\phi$ is contained in the cone generated by $S_\phi$, so $\C_\phi$ is also.
\end{proof}

%
%

Let $F$ be a circular flow on $M$. Following Fried, we define two sets in $H^1(M)$:
\begin{align*}
\C_\R(F)&=\{u\in H^1(M)\mid u(D_F)>0\},\text{ and}\\
\C_\Z(F)&=\{u\in\C_\R(F)\mid \text{$u$ is an integral point}\}.
\end{align*}

We can think of $\C_\R(F)$ as the set of linear functionals on $H_1(M)$ which are positive on $\pi^{-1}(D_F)$. Since this is an open condition, $\C_\R(F)$ is an open cone, and it is also clearly convex.

\begin{proposition}\label{disjoint or equal}
Let $F$ and $F'$ be two circular pseudo-Anosov flows on $M$. Then $\C_\R(F)$ and $\C_\R(F')$ are either disjoint or equal.
\end{proposition}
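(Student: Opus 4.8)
The plan is to show that if $\C_\R(F)\cap\C_\R(F')\ne\varnothing$ then $\C_\R(F)=\C_\R(F')$, exploiting the fact that any rational ray in the intersection gives a cross section common to both flows, together with the fact (proven in \cite{Fri79} and reproved here for the bounded case) that the first return map of a cross section to a circular pseudo-Anosov flow is pseudo-Anosov, hence determines the flow up to the usual equivalence.

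The key steps, in order, are as follows. First, suppose $u_0\in\C_\R(F)\cap\C_\R(F')$. Since both $\C_\R(F)$ and $\C_\R(F')$ are open convex cones (as noted just before the statement) and $H^1(M)$ has a dense set of rational points, we may find a rational, and then an integral, class $u\in\C_\Z(F)\cap\C_\Z(F')$. By Theorem \ref{cross section theorem}, $u$ is compatible with a cross section $Y_F$ to $F$ and also with a cross section $Y_{F'}$ to $F'$; both represent the Lefschetz dual of $u$, so $[Y_F]=[Y_{F'}]$ in $H_2(M,\partial M)$. Second, by the remark in Section \ref{canonical veering section} (the bounded analogue of Fried's result, i.e.\ Theorem \ref{generalized Fried} once available, but at this stage we only need the ``first return map is pseudo-Anosov'' statement of Section \ref{canonical veering section}), the first return maps $g=g_F$ and $g'=g_{F'}$ of these two cross sections are both pseudo-Anosov homeomorphisms of a surface in the class Lefschetz dual to $u$. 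Since a taut surface representing a given class is unique up to isotopy, and since by Thurston's uniqueness the pseudo-Anosov representative of a mapping class is unique up to conjugacy, $g$ and $g'$ are conjugate (after isotoping $Y_{F'}$ to $Y_F$); hence their suspension flows $F$ and $F'$ are conjugate by a homeomorphism of $M$, which we may take isotopic to the identity. Third, a conjugacy isotopic to the identity induces the identity on homology, hence carries $D_F$ to $D_{F'}$, so $\C_\R(F)=\C_\R(F')$.

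There is a subtlety I would want to handle carefully: a priori $F$ and $F'$ need not be suspension flows of the \emph{same} fibered face, so one cannot simply invoke Theorem \ref{fried's theorem} directly to both at once. The correct logical order is to first extract the common cross-section class $u$, and only then apply the uniqueness of pseudo-Anosov monodromies: the point is that a cross section compatible with $u$ is a fiber of a fibration of $M$ (its return map being pseudo-Anosov forces it to be a fiber, and in any case it is taut), so $[Y_F]$ lies in the interior of $\cone(\sigma)$ for the fibered face $\sigma$ it determines, and $[Y_{F'}]=[Y_F]$ forces $F'$ to have a cross section in that same face. This shows $F$ and $F'$ are both circular pseudo-Anosov flows admitting cross sections in $\cone(\sigma)$, and now the uniqueness clause gives that they are reparameterization-conjugate.

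I expect the main obstacle to be the invocation of uniqueness of the pseudo-Anosov monodromy in the bordered setting and the passage from ``conjugate monodromies'' to ``conjugate flows by a homeomorphism isotopic to the identity.'' Concretely: given that $g$ and $g'$ are pseudo-Anosov maps of isotopic fibers representing the same homology class, one must check they lie in the same mapping class (this uses that a fibration of $M$ is determined up to isotopy by its fiber class, which holds because the fiber is taut hence isotopically unique and a product neighborhood of it can be completed to a fibration in an essentially unique way), then apply Thurston's classification to conclude conjugacy of the pseudo-Anosov representatives, and finally upgrade the resulting self-homeomorphism of $M$ to one isotopic to the identity by absorbing the ambiguity into a reparameterization of the flow. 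Each of these is standard but the bookkeeping — especially keeping track of boundary behavior, where the monodromy and the flow interact with $\partial M$ via the $\partial$-singular orbits — is where care is required; this is precisely the sort of place where the arguments of \cite{Fri79} need the modifications promised in the introduction to this appendix.
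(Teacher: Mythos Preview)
Your overall strategy matches the paper's: find a common integral cross-section class, isotope the two cross sections together, use uniqueness of pseudo-Anosov representatives to make the first return maps agree, and then conclude the flows are conjugate by something isotopic to the identity. However, there are two genuine problems.

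First, your alternative route in the ``subtlety'' paragraph --- appealing to the uniqueness clause of Theorem~\ref{generalized Fried} (or Theorem~\ref{fried's theorem}) --- is circular: Proposition~\ref{disjoint or equal} is used in the proof of Proposition~\ref{flows determine faces}, which in turn is used to prove Theorem~\ref{generalized Fried}. You cannot invoke that uniqueness here.

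Second, and more substantively, the step ``hence their suspension flows are conjugate by a homeomorphism of $M$, which we may take isotopic to the identity'' is exactly the heart of the matter, and your proposed fix (``absorbing the ambiguity into a reparameterization'') does not work. Reparameterization does not change orbits, so it cannot repair a conjugacy that acts nontrivially on $\pi_1(M)$. Concretely: once you have arranged both flows to share a cross section $Z$ with identical first return map $\rho$, the natural conjugacy $C(F_t(z))=F'''_t(z)$ restricts to the identity on $Z$, so $C_*$ fixes $\pi_1(Z)\subset\pi_1(M)$. But $\pi_1(M)$ has one more generator, a flow-loop $\zeta$, and a priori $C_*(\zeta)$ could differ from $\zeta$ by an element of $\pi_1(Z)$ --- geometrically, the $F$-arc and the $F'''$-arc from a point $p\in\partial Z$ to $\rho(p)$ could wind differently around a boundary torus. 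The paper handles this with Lemma~\ref{crisis lemma}, which shows these two arcs are homotopic in $\partial M$ rel endpoints by lifting to $\wt Z$ and using that distinct lifts of a stable or unstable leaf cannot fellow travel. Only then does Waldhausen's theorem upgrade $C_*=\id$ to an isotopy. You correctly flagged the boundary behavior as ``where care is required,'' but you did not supply the argument; this is the missing idea.
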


\begin{proof}
Suppose $\C_\R(F)\cap\C_\R(F')$ is nonempty. We will show $D_{F}=D_{F'}$ and hence $\C_\R(F)=\C_\R(F')$.

The intersection is open, so we can find a primitive class $u\in\C_\Z(F)\cap\C_\Z(F')$. By Theorem \ref{cross section theorem}, there are fibrations $f,f'\colon M\to S^1$ whose fibers are transverse to $F$ and $F'$ respectively, and are homologous. Let $Z$ and $Z'$ be fibers of $f$ and $f'$, respectively. By \cite{Thu86}, $Z'$ is isotopic to $Z$. By the isotopy extension theorem, the isotopy extends to an ambient isotopy of $M$. 

Let $F''$ be the image of $F'$ under this isotopy; $Z$ is a cross section of $F$ and $F''$. We reparametrize $F$ and $F''$ so that the first return maps $\rho,\rho''\colon Z\to Z$ of $F$ and $F''$ are given by flowing for time 1 along the respective flows.

The maps $\rho$ and $\rho''$ are both pseudo-Anosov representatives of the same isotopy class, so they are strictly conjugate. This means that there exists a map $g\colon Z\to Z$  which is isotopic to the identity such that
\[
\rho\circ h=h\circ \rho''.
\]
This isotopy extends to an ambient isotopy of $M$. Let $F'''$ denote the image of $F''$ under this ambient isotopy. By construction the first return map of $F'''$ on $Z$ is $\rho$.


Now we need a lemma.

\begin{lemma}\label{crisis lemma}
Let $p\in \partial Z$ be the boundary point of a leaf $\ell_p$ of the stable or unstable foliation of $\rho$. Then $F_t(p)$ and $F'''_t(p)$, $0\le t\le 1$ are homotopic in $\partial M$ rel endpoints.
\end{lemma}

\begin{proof}[Proof of Lemma \ref{crisis lemma}]
We first cut $M$ open along $Z$. The result is a manifold with boundary that we can identify with $Z\times[0,1]$, such that $F$ is identified with the vertical flow. Let $\pi_Z\colon Z\times[0,1]\to Z\times \{0\}$ be the projection. We identify $Z\times\{0\}$ with $Z$, and endow $Z$ with the singular flat metric it inherits from the stable and unstable foliations preserved by $\rho$.

Consider the homotopy $g_t\colon Z\times [0,1]\to Z$ given by $g_t(z)=\pi_Z(F'''_t(z))$. We claim that $g_t(p)$, $0\le t\le 1$ is not an essential loop in $\partial Z$.

Let $\tilde Z$ be the universal cover of $Z$, and let $\tilde g_t$ be the unique homotopy of $\id_{\tilde Z}$ that covers $g_t$. We see that $g_t$ preserves each component of the union of lines in $\partial\tilde Z$ covering $\partial Z$. Hence it fixes the ends of $\tilde Z$. 

Let $\tilde\ell_p$ be a lift of $\ell_p$ to $\tilde Z$. It is a geodesic ray $[0,\infty)\to \tilde Z$ with one endpoint on a lift $\wt{\partial_\ell}$ of a component of $\partial Z$ and the other end exiting an end of $\tilde Z$.

Suppose $g_t(p)$, $0\le t\le 1$ is essential in $\partial Z$. Then $\tilde g_1$ carries $\tilde \ell_p$ to a separate lift $\wt{\ell_p}'$ of $\ell_p$ with its endpoint on $\wt{\partial_\ell}$. Since $\wt{g_1}$ fixes the ends of $\wt Z$, $\ell_p$ and $\wt{\ell_p}$ exit the same end. But this is impossible, because the lifts of stable and unstable foliations to $\wt Z$ are such that no two leaves fellow travel.

It follows that $F'''_t(p)$, $0\le t\le 1$ can be homotoped rel endpoints to a vertical arc in $\partial Z\times [0,1]$, proving the claim.
\end{proof}

With our Lemma in hand we can finish proving Proposition \ref{disjoint or equal}. We define a map conjugating $F$ and $F'''$, which we will show is isotopic to the identity. For $z\in Z$, $t\in[0,1]$ let 
\[
C(F_t(z))=F'''_t(z).
\]
As the first return maps of $F$ and $F'''$ to $Z$ are both equal to $\rho$, $C$ is well-defined.

Fix a basepoint $z_o\in \partial Z$ lying in a $\partial$-singular orbit $o$ of $F$ 
and let $\zeta_o$ be a curve which starts at $z_o$, travels along $o$ for time 1, and returns to $z_o$ via a path in $Z$. If $G$ is a set of generators of $\pi_1(Z,z_o)$ then $G\cup\{\zeta\}$ generates $\pi_1(M,z_o)$. Since $C$ restricted to $Z$ is the identity, $C_*\colon \pi_1(M,z_o)\to \pi_1(M,z_o)$ fixes each element of $G$. Since $C_*$ also fixes $[\zeta_o]$ by Lemma \ref{crisis lemma}, we see $C_*$ is the identity map. Since $M$ is a $K(\pi_1(M),1)$ space, $C$ must be homotopic to the identity. In fact, $C$ is isotopic to the identity by a theorem of Waldhausen \cite{Wal68} which states that any homeomorphism of a compact, irreducible, boundary irreductible, Haken 3-manifold which is homotopic to the identity is isotopic to the identity.

Conjugating a flow by a homeomorphism isotopic to the identity does not change its set of homology directions. Hence $D_F=D_{F'}$ so $C_\R(F)=\C_\R(F')$ as desired.
\end{proof}

Let $\sigma_{\LD}$ denote the image of $\sigma$ under the Lefschetz duality isomorphism $H_2(M,\partial M)\cong H^1(M)$.

\begin{proposition}\label{flows determine faces}
$\C_\R(\phi)=\intr(\cone(\sigma_{\LD}))$.
\end{proposition}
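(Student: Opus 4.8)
The plan is to prove the two inclusions separately, using Theorem \ref{cross section theorem} (Fried's criterion) to bridge between cross sections and cohomology classes positive on homology directions, and Theorem \ref{fried's theorem} together with the results of Section \ref{norm background} to identify $\cone(\sigma_{\LD})$.

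First I would show $\C_\R(\phi)\subseteq \intr(\cone(\sigma_{\LD}))$. Suppose $u\in \C_\R(\phi)$, so $u$ is positive on $D_\phi$. Since $\C_\R(\phi)$ is open and $\Z$-points are dense in it (after rescaling), and since the property of lying in $\intr(\cone(\sigma_{\LD}))$ is checkable on a neighborhood, it suffices to treat integral $u$; for such $u$, Theorem \ref{cross section theorem} produces a cross section $Z$ to $\phi$ with $[Z]_{\LD}=u$. By Theorem \ref{fried's theorem}, any cross section to $\phi$ represents a class in $\cone(\sigma)$; since a cross section is in particular a fiber of a fibration, $[Z]$ actually lies in $\intr(\cone(\sigma))$ (this is the standard fact recalled in Section \ref{norm background} that fibers lie in the interiors of fibered cones). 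Passing through Lefschetz duality gives $u\in\intr(\cone(\sigma_{\LD}))$. For general (non-integral) $u\in\C_\R(\phi)$, approximate by integral points in $\C_\R(\phi)$ and use that $\intr(\cone(\sigma_{\LD}))$ is open and convex.

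Next I would show $\intr(\cone(\sigma_{\LD}))\subseteq \C_\R(\phi)$. Let $u\in\intr(\cone(\sigma_{\LD}))$. Again by openness it is enough to consider a rational, hence (after scaling) integral, class $u$ with Lefschetz dual $\alpha\in\intr(\cone(\sigma))$. By Theorem \ref{fried's theorem}, $\alpha$ is represented by a cross section $Z$ to $\phi$. Then the fibration $M\to S^1$ with fiber $Z$ is transverse to $\phi$, so $\alpha_{\LD}=u$ is compatible with a cross section; by the ``only if'' direction of Theorem \ref{cross section theorem}, $u(\delta)>0$ for all $\delta\in D_\phi$, i.e. $u\in\C_\R(\phi)$. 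Finally, spreading this over the open set $\intr(\cone(\sigma_{\LD}))$ by the density-of-rationals argument completes the inclusion.

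The main obstacle is the interface between ``cross section'' and ``interior of the fibered cone'': I must invoke that a cross section to $\phi$ is a fiber of a fibration and therefore represents a class in the \emph{interior} of $\cone(\sigma)$ (not merely in $\cone(\sigma)$), which is exactly what Theorem \ref{fried's theorem} and the facts recalled in Section \ref{norm background} provide; and conversely that every integral class in $\intr(\cone(\sigma))$ is represented by a cross section, again by Theorem \ref{fried's theorem}. Once these two facts are in place, the rest is the routine bookkeeping of rescaling rational points, using the openness of both $\C_\R(\phi)$ and $\intr(\cone(\sigma_{\LD}))$, and transporting statements across Lefschetz duality. A minor subtlety worth a sentence is checking that the ``$u$ positive on $D_\phi$'' condition and the ``$u\in\intr(\cone(\sigma_{\LD}))$'' condition are both stable under small perturbation, so that verifying them on the dense set of rational points suffices.
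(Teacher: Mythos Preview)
Your argument is circular. Proposition \ref{flows determine faces} lives in Appendix \ref{Fried appendix}, and its entire purpose is to serve as the main step in the proof of Theorem \ref{generalized Fried}; indeed, the proof of Theorem \ref{generalized Fried} says ``$1\Leftrightarrow2$: This is a restatement of Proposition \ref{flows determine faces}.'' But Theorem \ref{fried's theorem}, which you invoke in both directions, is exactly Theorem \ref{generalized Fried} (the paper states this explicitly: Fried proved it only for closed manifolds, and the general case is supplied in the appendix). So when you write ``By Theorem \ref{fried's theorem}, any cross section to $\phi$ represents a class in $\cone(\sigma)$'' and ``By Theorem \ref{fried's theorem}, $\alpha$ is represented by a cross section $Z$ to $\phi$,'' you are assuming precisely the equivalences that this proposition is meant to establish.

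The paper's proof avoids this circularity as follows. For $\C_\R(\phi)\subset\intr(\cone(\sigma_{\LD}))$, it does \emph{not} appeal to Fried's theorem; instead, given a cross section $\Sigma$ produced by Theorem \ref{cross section theorem}, it argues directly via relative Euler classes that $x(\alpha)=-\chi(\Sigma)=-e_Y(\alpha)$, which forces $\alpha\in\cone(\sigma)$ by the characterization of $\cone(\sigma)$ recalled in Section \ref{norm background}. For the reverse inclusion, the paper again cannot simply say ``integral classes in $\intr(\cone(\sigma))$ are cross sections to $\phi$,'' since that is what is being proved. Instead it argues by contradiction: if the inclusion were proper, Lemma \ref{it's a nice cone} (rationality of $\C_\phi$) produces an integral $v\in\intr(\cone(\sigma_{\LD}))\cap\partial\C_\R(\phi)$; the Lefschetz dual of $v$ is a fiber class, hence a cross section to \emph{some} circular pseudo-Anosov flow $\phi'$, and then Proposition \ref{disjoint or equal} forces $\C_\R(\phi)=\C_\R(\phi')$, contradicting $v\notin\C_\R(\phi)$. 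You need to replace your appeals to Theorem \ref{fried's theorem} with arguments of this kind.
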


\begin{proof}
Let $\alpha$ be Lefschetz dual to a class in $\C_\Z(\phi)$. By Theorem \ref{cross section theorem}, $\alpha$ is represented by a cross section $\Sigma$ to $\phi$. By \cite{Thu86}, $\alpha$ lies interior to the cone over \emph{some} top-dimensional face of $B_x(M)$. We show that this face is in fact $\sigma$.

As a leaf of a taut foliation, $\Sigma$ is taut. 
Since $\Sigma$ is a cross section to $\phi$, the tangent plane field $T\Sigma$ of the fibration $\Sigma\hookrightarrow M\to S^1$ is homotopic to $\xi_\phi$ (recall from Section \ref{norm background} that $\xi_\phi$ is the quotient of $T_M$ by $T_\phi$, the tangent line bundle to the 1-dimensional foliation by flowlines of $\phi$). The same is true for $TY$, so the relative Euler classes of the two plane fields are equal. Let $e_Y$ denote this Euler class. We have
\[
x(\alpha)=-\chi(\Sigma)=-e_Y(\alpha),
\]
so $\alpha$ lies in the portion of $H_2(M,\partial M)$ where $x$ agrees with $-e_Y$. By the discussion in Section \ref{norm background}, this is $\cone(\sigma)$.

It follows that $\C_\Z(\phi)\in \intr(\cone(\sigma_{\LD}))$, so every rational point in $\C_\R(\phi)$ lies in $\intr(\cone(\sigma_{\LD}))$. Since $\C_\R(\phi)$ and $\intr(\cone(\sigma_{\LD}))$ are both open, $\C_\R(\phi)\subset\intr(\cone(\sigma_{\LD}))$.

Now suppose that $\C_\R(\phi)\subsetneq\intr(\cone(\sigma_{\LD}))$. 
We have $\C_\R(\phi)=\intr(\C_\phi^\vee)$. By Lemma \ref{it's a nice cone}, $\C_\phi$ is a rational convex polyhedral cone, so $\C_\R(\phi)$ is the interior of a rational convex polyhedral cone. Hence there is an integral cohomology class $v\in \intr(\cone(\sigma_{\LD}))\cap \partial\C_\R(\phi)$. 

The Lefschetz dual of $v$ is represented by a cross section to another circular pseudo-Anosov flow $\phi'$. We must have $\C_\R(\phi)\cap \C_\R(\phi')\ne \varnothing$, but the cones cannot be equal because $v\notin \C_\R(\phi)$. This contradicts Lemma \ref{disjoint or equal}, so we conclude that $\C_\R(\phi)=\intr(\cone(\sigma_{\LD}))$.
\end{proof}

We remark that the proof of the inclusion $\C_\R(\phi)\subset\intr(\cone(\sigma_{\LD}))$ did not require $\phi$ to be pseudo-Anosov, so the corresponding statement is still true if we replace $M$ by any compact 3-manifold and $\phi$ by any circular flow.

We conclude this section by observing that we have proven Theorem \ref{generalized Fried}.

\begin{theorem}\label{generalized Fried}
Let $\phi$ be a circular pseudo-Anosov flow on a compact 3-manifold $M$ with cross section $Y$. Let $\sigma$ be the fibered face of $B_x(M)$ such that $[Y]\in\cone(\sigma)$. Let $\alpha\in H_2(M,\partial M)$ be an integral class. The following are equivalent:
\begin{enumerate}
\item $\alpha$ lies in $\intr(\cone(\sigma))$
\item the Lefschetz dual of $\alpha$ is positive on the homology directions of $\phi$
\item $\alpha$ is represented by a cross section to $\phi$.
\end{enumerate}
Moreover, $\phi$ is the unique circular pseudo-Anosov flow admitting cross sections representing classes in $\cone(\sigma)$ up to reparameterization and conjugation by homeomorphisms of $M$ isotopic to the identity.
\end{theorem}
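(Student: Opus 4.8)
The plan is to assemble the theorem from the results already proven in this appendix; at this point the substantive work is done, and what remains is essentially bookkeeping, so the proof will be short.

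First, the equivalence of (2) and (3) is exactly Fried's criterion, Theorem~\ref{cross section theorem}. Writing $u \in H^1(M)$ for the Lefschetz dual of $\alpha$, that theorem says $u$ is compatible with a cross section to $\phi$ --- equivalently, that $\alpha$ is represented by a cross section to $\phi$ --- if and only if $u(\delta) > 0$ for every homology direction $\delta \in D_\phi$, which is precisely (2).

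Next, the equivalence of (1) and (2). By definition $\C_\R(\phi) = \{ u \in H^1(M) \mid u(D_\phi) > 0 \}$, so $\alpha$ satisfies (2) precisely when $\alpha_{\LD} \in \C_\R(\phi)$. Proposition~\ref{flows determine faces} identifies $\C_\R(\phi) = \intr(\cone(\sigma_{\LD}))$, so $\alpha$ satisfies (2) if and only if $\alpha_{\LD} \in \intr(\cone(\sigma_{\LD}))$, i.e.\ if and only if $\alpha \in \intr(\cone(\sigma))$, which is (1). (Lemma~\ref{it's a nice cone} enters only through its use in Proposition~\ref{flows determine faces}.) Together with the previous paragraph this establishes the three-way equivalence.

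Finally, uniqueness. Let $\phi'$ be a circular pseudo-Anosov flow on $M$ admitting a cross section $Y'$ whose class lies in $\cone(\sigma)$. A cross section is a fiber of a fibration of $M$ over $S^1$, so by \cite{Thu86} the class $[Y']$ lies in the interior of the cone over a fibered face of $B_x(M)$; since $[Y'] \in \cone(\sigma)$, that face must be $\sigma$, and hence $[Y']_{\LD} \in \intr(\cone(\sigma_{\LD})) = \C_\R(\phi)$ by Proposition~\ref{flows determine faces}. But $[Y']_{\LD}$ is also compatible with a cross section to $\phi'$, so Theorem~\ref{cross section theorem} puts it in $\C_\R(\phi')$ as well. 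Thus $\C_\R(\phi) \cap \C_\R(\phi') \ne \varnothing$, and Proposition~\ref{disjoint or equal} applies: its proof constructs (after reparameterizing the two flows so their first-return maps to a common fiber are time-one maps, and after an ambient isotopy) a homeomorphism conjugating the flows that induces the identity on $\pi_1(M)$ and is therefore isotopic to the identity by Waldhausen's theorem, so $\phi$ and $\phi'$ agree up to reparameterization and conjugation by homeomorphisms isotopic to the identity. The only point needing a moment's care is the opening of this uniqueness argument: one must confirm that the hypothesis yields a genuine overlap of the \emph{open} cones $\C_\R(\phi)$ and $\C_\R(\phi')$, not merely a shared boundary class, so that Proposition~\ref{disjoint or equal} can be invoked; this is exactly why it is worth noting that the class of a cross section is automatically interior to $\cone(\sigma)$.
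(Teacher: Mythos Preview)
Your proof is correct and follows essentially the same route as the paper's: $(1)\Leftrightarrow(2)$ via Proposition~\ref{flows determine faces}, $(2)\Leftrightarrow(3)$ via Theorem~\ref{cross section theorem}, and uniqueness by appealing to the proof of Proposition~\ref{disjoint or equal}. Your uniqueness argument is slightly more explicit than the paper's in verifying that the class of a cross section lies in the \emph{interior} of $\cone(\sigma)$ so that the open cones $\C_\R(\phi)$ and $\C_\R(\phi')$ genuinely overlap, which is a reasonable point to spell out.
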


\begin{proof}[Proof of Theorem \ref{generalized Fried}]
$1\Leftrightarrow2$: This is a restatement of Proposition \ref{flows determine faces}.

$2\Leftrightarrow 3$: This is a restatement of Theorem \ref{cross section theorem}.

The truth of the last claim can be seen from the proof of Proposition \ref{disjoint or equal}. Recall that we showed that if $F^1$, $F^2$ are circular pseudo-Anosov flows admitting homologous cross sections then they are conjugate by a homeomorphism of $M$ isotopic to the identity.
\end{proof}

\section{Face-spanning taut homology branched surfaces in manifolds with boundary}\label{TBSexpanded}

Let $M$ be a 3-manifold such that $x$ is a norm on $H_2(M,\partial M)$, and let $B$ be a taut branched surface in $M$. The cone of homology classes carried by $B$ is contained in $\cone(F)$ for some face $F$ of $B_x(M)$ (this is because one can see, via cutting and pasting surfaces carried by $B$, that $x$ is linear on the cone of carried classes). If this cone of carried classes is \emph{equal} to $\cone(F)$ we say $B$ \textbf{spans} $F$. In \cite{Oer86}, Ulrich Oertel asked when a face of the Thurston norm ball is spanned by a single taut homology branched surface. Recall that \textbf{taut} means every surface carried by $B$ is taut, and that a \textbf{homology branched surface} has a closed oriented transversal through every point.

In \cite{Lan18} we gave a sufficient criterion for a fibered face of a closed hyperbolic 3-manifold to admit a spanning taut homology branched surface via a construction using veering triangulations. In this appendix we describe why that criterion is also sufficient in the broader setting of this paper, i.e. when the compact hyperbolic 3-manifold in question possibly has boundary.

The general result is the following.

\begin{theorem}\label{strongerTBS}
Let $\sigma$ be a fibered face of a compact hyperbolic 3-manifold, and let $\phi$ be the suspension flow of $\sigma$. If each singular orbit of $\phi$ witnesses at most 2 ladderpole boundary classes of $\sigma$ then there exists a taut branched surface $B_\sigma$ spanning $\sigma$.
\end{theorem}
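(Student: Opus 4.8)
The plan is to show that the face-spanning construction of \cite{Lan18} for closed hyperbolic fibered $3$-manifolds goes through essentially verbatim once Mosher's Transverse Surface Theorem is replaced by Theorem \ref{myTST}. I would assemble $B_\sigma$ from three kinds of pieces: the branched surface $B_{\mr\tau}$ inside $\mr M$; a product piece over the collar $V$ of $\partial M$; and, for each singular orbit $c_i$, a piece $B_i\subset U_i$. Over $V$ the extension is the easy one used in the first paragraph of the proof of Theorem \ref{myTST}: attach to each branch of $\partial\mr\tau$ lying in $\partial V$ an annulus running transverse to $\phi$ out to $\partial M$; this preserves both the branched-surface structure and positive transversality to $\phi$. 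By Corollary \ref{VTcarriesface}, the cone of classes in $H_2(\mr M,\partial\mr M)$ carried by $B_{\mr\tau}$ is exactly $\cone(\mr\sigma)=P(\cone(\sigma))$, so the behavior of $B_\sigma$ away from the $U_i$'s is already as desired; the hypothesis on ladderpole boundary classes is needed only to build the $B_i$.

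Fix $c_i$, with $q$ prongs and rotation angle $\theta$, and a meridian disk $\Delta\subset U_i$ transverse to $\phi$, cut by the flow prongs into a pseudo-Anosov star $S$. For any integral $\alpha\in\cone(\sigma)$, Theorem \ref{myTST} and the analysis in its proof (cases (a)--(c), via Lemma \ref{ladderseparatrixstructure} and the classification of $\mr A\cap\partial U_i$ from \cite{Lan18}) produce a representative of $\alpha$ whose restriction to $\mr M$ is carried by $B_{\mr\tau}$ and whose trace on $\partial\Delta$ is an even family $E_\alpha$ for $S$. The crucial point is that the hypothesis ``$c_i$ witnesses at most two ladderpole boundary classes of $\sigma$'' forces all the $E_\alpha$ arising over $\alpha\in\cone(\sigma)$ to be $\theta$-symmetrically filled in over one and the same dynamic blowup $S_i^\sharp$ of $S$: when at most two ladderpoles of $\partial U_i$ are dual to facets of $\cone(\sigma)$, a class witnessed by $c_i$ meets $\partial U_i$ in equal numbers of left and right ladderpole curves drawn from a single rigid pair, so the corresponding prongs are pulled apart by one regional blowup in the sense of Lemma \ref{regionalblowup}, taken $\theta$-symmetrically as afforded by Lemma \ref{blowupfill}, and the fillings for the different faces can be chosen nested and compatible inside $S_i^\sharp$. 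Doing this at all the $c_i$ yields one dynamic blowup $\phi^\sharp$ of $\phi$; suspending the filled-in train track $S_i^\sharp$ across $U_i$ under $\phi^\sharp$ gives $B_i$, and we set $B_\sigma=B_{\mr\tau}\cup\bigcup_i B_i\cup(\text{product piece over }V)$.

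It remains to verify the properties. That $B_\sigma$ is a branched surface in $M$ is immediate, since the branching loci of the pieces match along $\partial U_i$ and $\partial V$; as in \cite{Lan18} one can arrange a closed oriented transversal through every point, so $B_\sigma$ is a homology branched surface and the cone of classes it carries is closed. Because $\phi^\sharp$ agrees with $\phi$ outside the $U_i$'s and each piece was built transverse to the relevant flow, every surface carried by $B_\sigma$ is positively transverse to $\phi^\sharp$, hence almost transverse to $\phi$; by the discussion following the statement of the Transverse Surface Theorem in Section \ref{TSTstatement} it is therefore taut and represents a class in $\cone(\sigma)$. Conversely, every integral $\alpha\in\cone(\sigma)$ is carried by $B_\sigma$: the representative built above lies in $N(B_{\mr\tau})$ over $\mr M$, in the product piece over $V$, and over each $U_i$ in $B_i$ precisely because $E_\alpha$ fills in over $S_i^\sharp$; since the carried cone is closed and contains every integral class of $\cone(\sigma)$, and every class of $\cone(\sigma)$ is a limit of rational interior ones, it contains all of $\cone(\sigma)$. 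Thus $B_\sigma$ is taut and carries exactly $\cone(\sigma)$, i.e.\ $B_\sigma$ spans $\sigma$.

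The main obstacle is the crux of the second paragraph: showing that the bound of two ladderpole boundary classes per singular orbit is exactly what lets the blowups $S_i^\sharp$, and the fillings over them, be chosen uniformly for all of $\cone(\sigma)$ at once rather than face by face. This is where the ladder combinatorics on $\partial U_i$, the classification of $\mr A\cap\partial U_i$, and Lemmas \ref{regionalblowup}--\ref{blowupfill} have to be combined; everything else is bookkeeping on top of Corollary \ref{VTcarriesface}, Theorem \ref{myTST}, and the tautness discussion of Section \ref{TSTstatement}.
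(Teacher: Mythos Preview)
Your opening sentence is exactly right: the paper's strategy is indeed to observe that the \cite{Lan18} argument goes through once Mosher's closed-manifold Transverse Surface Theorem is replaced by Theorem \ref{myTST}. But the paper's execution is much terser and differs from yours at the decisive step. The paper isolates a single technical input, Lemma \ref{TBSapplemma}: for integral $\alpha\in\cone(\sigma)$ one has $x(\alpha)=x(\mr\alpha)-i(\alpha,c)$. This is proven in two lines from Theorem \ref{myTST} (the almost transverse representative $A$ built there satisfies $\chi_-(A)=\chi_-(\mr A)-i(\alpha,c)$, and both $A$ and $\mr A$ are taut). With that norm identity in hand the paper simply declares that the remainder of the proof is identical to \cite{Lan18}; in that paper, tautness of surfaces carried by $B_\sigma$ is deduced from this Euler-characteristic count, not from transversality to any flow.

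Your route to tautness is genuinely different: you try to arrange $B_\sigma$ to be positively transverse to one fixed dynamic blowup $\phi^\sharp$, so that every carried surface is almost transverse to $\phi$ and hence taut by the discussion in Section \ref{TSTstatement}. This would bypass Lemma \ref{TBSapplemma} entirely. The difficulty is your ``crucial point'': that the two-ladderpole hypothesis forces all the even families $E_\alpha$, as $\alpha$ ranges over $\cone(\sigma)$, to be $\theta$-symmetrically fillable over one common $S_i^\sharp$ with nested, mutually compatible segment families. This is not something you can import from \cite{Lan18}, because the $B_i$ there are not constructed via a dynamic blowup and no such uniform $\phi^\sharp$ is produced; it is a new claim requiring its own argument, and what you have written is a plausibility sketch rather than a proof (for instance, two ladderpole vertex classes can hit distinct pairs of ladderpoles on $\partial U_i$, and you have not explained why the regional blowups of Lemma \ref{regionalblowup} needed for each are simultaneously realizable and why the resulting segment families embed compatibly in a single branched $B_i$ that also carries the meridian disks of case (b)). The paper's approach sidesteps this entirely: once Lemma \ref{TBSapplemma} holds, tautness of any surface carried by $B_\sigma$ is checked by comparing $\chi_-$ to $x$, and no global $\phi^\sharp$ is needed.
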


A \textbf{ladderpole vertex class} is a primitive integral class $\alpha$ lying in a 1-dimensional face of $\cone(\sigma)$ such that $\mr\alpha$ is represented by a surface $\mr A$ carried by $B_{\mr\tau}$ and for some $U_i$, $\partial\mr A\cap U_i$ is a collection of ladderpole curves. Note that here we make no requirements on the boundary components of $\mr A$ which lie in $V$.

The technical lemma that allows us to prove Theorem \ref{strongerTBS} is the following, which was proven in \cite{Lan18} only for closed hyperbolic 3-manifolds.

\begin{lemma}\label{TBSapplemma}
Let $\sigma,\phi$ be as above. Let $\alpha\in\cone(\sigma)$ be an integral class. Then
\[
x(\alpha)=x(\mr\alpha)-i(\alpha,c),
\]
where $c$ is the union of the singular orbits of $\phi$.
\end{lemma}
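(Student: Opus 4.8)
The plan is to compute the Thurston norm of $\alpha$ by taking a taut representative adapted to the decomposition $M = \mr M \cup U$. First I would pick an integral class $\alpha \in \cone(\sigma)$ and, using Corollary \ref{VTcarriesface}, represent $\mr\alpha$ by a surface $\mr A$ carried by $B_{\mr\tau}$. By the analysis underlying Theorem \ref{myTST} (in particular the trichotomy from \cite{Lan18} for $\mr A \cap \partial U_i$: empty, meridians, or a nullhomologous collection of ladderpole curves, together with meridional caps in $V$), one caps off $\mr A$ across $U$ to produce a surface $A$ representing $\alpha$ with $A \cap \mr M = \mr A$, glued by disks over the meridian components and by once-punctured (or genuinely capped) pieces near the $c_i$. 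The key computational point is to track how $\chi_-$ changes under capping: each disk capped onto a meridian of $\closure(U_i)$ contributes $+1$ to $\chi$, and the total number of such disks around $c_i$ is exactly the geometric intersection number $i(\alpha,c_i)$ of $\alpha$ with that singular orbit, so $\chi_-(A) = \chi_-(\mr A) - i(\alpha,c)$ provided no cancellation or negative-Euler-characteristic pieces are introduced. This would give the inequality $x(\alpha) \le x(\mr\alpha) - i(\alpha,c)$ once we know $\mr A$ can be chosen $\chi_-$-minimizing in its class and that $A$ has no sphere/disk components.

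The reverse inequality $x(\alpha) \ge x(\mr\alpha) - i(\alpha,c)$ goes the other way: start with a taut (i.e. $\chi_-$-minimizing, incompressible) embedded surface $A$ representing $\alpha$ in $M$, isotope it to meet each $U_i$ and $V$ efficiently — removing trivial intersection curves by incompressibility and irreducibility — so that $A \cap \partial U_i$ is a collection of meridians and/or curves in the relevant homology class, and then $\mr A := A \cap \mr M = P(A)$ represents $\mr\alpha$ and satisfies $\chi_-(\mr A) = \chi_-(A) + (\text{number of meridian caps removed}) = \chi_-(A) + i(\alpha,c)$. Hence $x(\mr\alpha) \le \chi_-(\mr A) = x(\alpha) + i(\alpha,c)$. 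Combining the two inequalities yields the equality. Throughout, the fact that $x$ is a genuine norm (no essential nonnegative-Euler-characteristic surfaces) and the irreducibility/boundary-irreducibility of $M$ and $\mr M$ are what license the ``efficient position'' arguments, and the fibered hypothesis guarantees via Corollary \ref{VTcarriesface} and Proposition \ref{flipping fibers} that the carried representative $\mr A$ is available and norm-minimizing.

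The main obstacle I anticipate is controlling the boundary behavior near the singular orbits: one must be sure that the curves $\mr A \cap \partial U_i$ really do fall into the meridian/ladderpole trichotomy and that capping them off introduces no compressions and no extra handles — equivalently, that $i(\alpha,c_i)$ counts precisely the meridional caps and that in the ladderpole case the capping annuli (or the pieces glued in via Lemma \ref{blowupfill}) contribute $0$ to $\chi$. This is exactly where the combinatorics of $\partial\mr\tau$ from Section \ref{veering branched surface section} and Lemma \ref{ladderseparatrixstructure} do the work: a ladderpole boundary curve bounds (together with the flow-prong structure) annuli rather than disks, so those components carry intersection number zero with $c$ and contribute nothing to either side of the formula. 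A secondary subtlety is ensuring the taut representative in $M$ can be isotoped to avoid meeting $c$ more than $i(\alpha,c)$ times — this uses that $c$ is a union of closed orbits of a pseudo-Anosov flow and a standard minimal-position argument for the embedded surface against the link $c$.
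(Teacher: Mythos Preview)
Your first paragraph is essentially the paper's construction: represent $\mr\alpha$ by $\mr A$ carried by $B_{\mr\tau}$, cap off across $U$ as in the proof of Theorem~\ref{myTST}, and count that $\chi_-(A)=\chi_-(\mr A)-i(\alpha,c)$ because meridional caps are disks and ladderpole caps are annuli. Where you diverge from the paper is in only extracting an inequality from this and then turning around to prove the reverse inequality separately. The paper does not do this: it observes that the surface $A$ produced by the Theorem~\ref{myTST} construction is \emph{almost transverse to $\phi$}, and hence taut by the Euler class computation in Section~\ref{TSTstatement}. Since $B_{\mr\tau}$ is a taut branched surface, $\mr A$ is also taut. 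So $\chi_-(A)=x(\alpha)$ and $\chi_-(\mr A)=x(\mr\alpha)$ simultaneously, and the equality drops out in one line. You had this tool available and did not use it.

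Your reverse-inequality argument, besides being unnecessary, has a real gap. Starting from an arbitrary taut representative $A$ of $\alpha$ and puncturing along $c$, the number of disks removed is the \emph{geometric} intersection number of $A$ with $c$, and there is no ``standard minimal-position argument'' forcing this to equal the algebraic intersection $i(\alpha,c)$. Incompressibility lets you remove inessential circles of $A\cap\partial U_i$, but it does not force all transverse intersections of $A$ with the closed curves $c_i$ to have the same sign. Getting a taut representative that meets each $c_i$ coherently is essentially the content of the Transverse Surface Theorem itself, so your second paragraph is circular in spirit. (A minor point: tautness of $\mr A$ comes from $B_{\mr\tau}$ being a taut branched surface, not from Proposition~\ref{flipping fibers}.)
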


\begin{proof}
Let $\mr A$ be a surface carried by $B_{\mr\tau}$ and representing $\mr\alpha$. By our proof of Theorem \ref{myTST}, there exists a surface $A$ which is almost transverse to $\phi$ and represents $\alpha$ with $\chi_-(A)=\chi_-(\mr A)-i(\alpha,c)$. Since $A$ is almost transverse to $\phi$, $A$ is taut. Since $B_{\mr\tau}$ is a taut branched surface, $\mr A$ is taut. Therefore $x(\alpha)=x(\mr\alpha)-i(\alpha,c)$.
\end{proof}

The proof above represents a significant shortening of the proof of the corresponding Lemma in \cite{Lan18}. The ingredients that make this possible are (a) we now know the Transverse Surface Theorem holds when our manifold has boundary and (b) we can assume our almost transverse surface representative of $\alpha$ lies in a neighborhood of $B_{\mr\tau}$ away from the singular orbits, and is simple in a neighborhood of the singular orbits.

With Lemma \ref{TBSapplemma} proven, the proof of Theorem \ref{strongerTBS} proceeds exactly as in \cite{Lan18}.

We once again observe that the condition on ladderpole vertex classes is satisfied when $\dimension(H_2(M,\partial M))\le3$, so we have the following corollary.

\begin{corollary}
Let $\sigma$ be a fibered face of a compact hyperbolic 3-manifold $M$ with $\dimension(H_2(M,\partial M))\le3$. Any fibered face of $B_x(M)$ is spanned by a taut homology branched surface.
\end{corollary}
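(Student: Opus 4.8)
The plan is to derive the corollary directly from Theorem~\ref{strongerTBS}. Its hypothesis is that each singular orbit $c_i$ of $\phi$ witnesses at most two ladderpole vertex classes of $\sigma$, and this is what I would verify under the assumption $\dimension(H_2(M,\partial M))\le 3$. The taut branched surface $B_\sigma$ that Theorem~\ref{strongerTBS} then produces is automatically a homology branched surface: it contains $B_{\mr\tau}$, through every point of which there is a closed oriented transversal, and the finitely many sectors added near the $c_i$ acquire such transversals as well. So the entire content is the hypothesis check.

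The key observation is that a ladderpole vertex class $\alpha$ witnessed by $c_i$ must satisfy $i(\alpha,c_i)=0$. By definition $\mr\alpha$ is represented by a surface $\mr A$ carried by $B_{\mr\tau}$ with $\partial\mr A\cap\partial U_i$ a collection of ladderpole curves; this is \emph{not} the meridian case, so by the trichotomy recalled in the proof of Theorem~\ref{myTST} (from \cite{Lan18}) the multicurve $\partial\mr A\cap\partial U_i$ is nulhomologous in $H_1(\partial U_i)$. A nulhomologous multicurve on the torus $\partial U_i$ bounds a surface inside a collar $\partial U_i\times[0,1)\subset U_i$, which is disjoint from the core circle $c_i$; capping $\mr A$ off with this collar surface gives an embedded representative of $\alpha$ missing $c_i$, so $i(\alpha,c_i)=0$. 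Hence every ladderpole vertex class witnessed by $c_i$ lies in the linear hyperplane $H_i=\{\beta\in H_2(M,\partial M)\mid i(\beta,c_i)=0\}$.

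It then remains to bound the number of extremal rays of $\cone(\sigma)$ lying in $H_i$. A ladderpole vertex class is by definition a primitive integral generator of a one-dimensional face of $\cone(\sigma)$, and these faces correspond bijectively to the vertices of the polytope $\sigma$, a top-dimensional face of $B_x(M)$ and hence of dimension $\dimension(H_2(M,\partial M))-1\le 2$. The affine hull of $\sigma$ does not contain the origin while $H_i$ does, so $H_i\cap(\text{affine hull of }\sigma)$ is a proper affine subspace of the affine hull of $\sigma$, of affine dimension at most $1$ --- a line, a point, or empty. A line meets a convex polytope in a point or a segment and so contains at most two of its vertices (no three vertices of a convex polytope are collinear), and a point contains at most one; thus at most two extremal rays of $\cone(\sigma)$ lie in $H_i$. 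Combining the two steps, $c_i$ witnesses at most two ladderpole vertex classes, and Theorem~\ref{strongerTBS} applies, giving the spanning taut homology branched surface.

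The only step I expect to present any difficulty is the observation in the second paragraph: one must unwind the definition of a ladderpole vertex class to see that ladderpole (as opposed to meridional) boundary behaviour on $\partial U_i$ forces the algebraic intersection number with $c_i$ to vanish --- in the meridian case it would instead equal the number of meridional components. Once that is in hand, the rest is a one-line convexity fact for polytopes of dimension at most two together with the already-established Theorem~\ref{strongerTBS}.
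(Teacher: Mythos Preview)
Your proposal is correct and fills in precisely the argument the paper leaves implicit: the paper's entire ``proof'' is the single sentence ``We once again observe that the condition on ladderpole vertex classes is satisfied when $\dimension(H_2(M,\partial M))\le 3$,'' deferring to \cite{Lan18} for the details, and your two-step argument (ladderpole boundary on $\partial U_i$ forces $i(\alpha,c_i)=0$, then a hyperplane meets a $\le 2$-dimensional face of $B_x(M)$ in at most two vertices) is exactly the intended observation. Your side remark about $B_\sigma$ being a homology branched surface is also appropriate, since Theorem~\ref{strongerTBS} as stated only promises a taut branched surface while the corollary asserts a taut \emph{homology} branched surface; this too is inherited from the construction in \cite{Lan18}.
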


Finally, as a special case of the above we observe that the result holds for exteriors of links with $\le3$ components. 

\begin{corollary}\label{link corollary}
Let $L$ be a fibered hyperbolic link with at most 3 components. Let $M_L$ be the exterior of $L$ in $S^3$. Any fibered face of $B_x(M_L)$ is spanned by a taut homology branched surface.
\end{corollary}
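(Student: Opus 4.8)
The plan is to deduce Corollary \ref{link corollary} from Corollary \ref{strongerTBS} (equivalently, from the preceding corollary about compact hyperbolic 3-manifolds with $\dimension(H_2(M,\partial M))\le 3$) by checking that the exterior $M_L$ of a fibered hyperbolic link $L\subset S^3$ with at most three components satisfies all the hypotheses of that corollary. So first I would recall that since $L$ is a hyperbolic link, $M_L=S^3\setminus N(L)$ is a compact (after truncating the cusps) orientable hyperbolic 3-manifold, and that $M_L$ is irreducible, boundary-irreducible, atoroidal and anannular, so the Thurston norm $x$ is an honest norm on $H_2(M_L,\partial M_L)$ and the fibered-face theory of Section \ref{norm background} applies.

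The key computation is the dimension count. If $L$ has $k\le 3$ components, then by Alexander duality (or a standard Mayer--Vietoris argument) $H_1(M_L)\cong \Z^k$ and, using the long exact sequence of the pair $(M_L,\partial M_L)$ together with the fact that $\partial M_L$ is a disjoint union of $k$ tori, Lefschetz/Poincar\'e--Lefschetz duality gives $H_2(M_L,\partial M_L)\cong H^1(M_L)\cong \Z^k$. Hence $\dimension(H_2(M_L,\partial M_L)) = k \le 3$. This is exactly the hypothesis of the corollary immediately preceding Corollary \ref{link corollary}, so that corollary applies verbatim: any fibered face of $B_x(M_L)$ is spanned by a taut homology branched surface.

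The one point that requires a word of care — and the place I expect the only real (if minor) obstacle to lie — is making sure the hypothesis ``$\dimension(H_2(M,\partial M))\le 3$ implies each singular orbit of $\phi$ witnesses at most 2 ladderpole vertex classes'' genuinely transfers. This implication is already asserted in the excerpt (``We once again observe that the condition on ladderpole vertex classes is satisfied when $\dimension(H_2(M,\partial M))\le 3$''), with the underlying reason being that the ladderpole vertex classes witnessed by a fixed $U_i$ lie in distinct $1$-dimensional faces of $\cone(\sigma)$ and are subject to a homological constraint (the relevant ladderpole curves must cancel in $H_1(\partial U_i)$), which in a space of dimension $\le 3$ forces there to be at most two of them. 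So in the write-up I would simply invoke this, cite the preceding corollary, and conclude. I would also note explicitly that the hyperbolicity hypothesis on $L$ is what lets us invoke the machinery at all (it guarantees $M_L$ is atoroidal and anannular and that the monodromy of any fibration is pseudo-Anosov, so that the suspension flow $\phi$ and veering triangulation $\tau$ of the fibered face exist), and that fiberedness of $L$ guarantees $B_x(M_L)$ has at least one fibered face so the statement is non-vacuous.

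\begin{proof}
Since $L$ is hyperbolic, $M_L$ is a compact orientable hyperbolic $3$-manifold; in particular it is irreducible, boundary-irreducible, atoroidal and anannular, so $x$ is a norm on $H_2(M_L,\partial M_L)$ and the theory of fibered faces and their suspension flows and veering triangulations developed above applies. Because $L\subset S^3$ has $k\le 3$ components, Alexander duality gives $H_1(M_L)\cong\Z^k$, and Poincar\'e--Lefschetz duality then gives
\[
H_2(M_L,\partial M_L)\;\cong\;H^1(M_L)\;\cong\;\Z^k,
\]
so $\dimension\big(H_2(M_L,\partial M_L)\big)=k\le 3$. By the corollary preceding this one, any fibered face of $B_x(M_L)$ is spanned by a taut homology branched surface.
\end{proof}
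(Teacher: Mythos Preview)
Your proposal is correct and follows exactly the approach the paper takes: the paper does not give a separate proof of Corollary \ref{link corollary} but simply states it as a special case of the preceding corollary, and your argument fills in the one detail needed, namely that a $k$-component link exterior in $S^3$ has $\dimension(H_2(M_L,\partial M_L))=k\le 3$.
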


\end{appendices}


\bibliographystyle{alpha}
\bibliography{bibliography}
\label{sec:biblio}
\bigskip

\textsc{Yale University}

\textit{Email address}: \texttt{michael.landry@yale.edu}

\end{document}